\colorlet{dblue}{blue!40!black}
\newcommand{\texcomp}[3]{#1{#2{#3}}}
\newcommand{\prim}[1]{#1'}
\theoremstyle{plain}
\newtheorem{theorem}{Theorem}[section]
\newtheorem{corollary}[theorem]{Corollary}
\newtheorem{lemma}[theorem]{Lemma}
\newtheorem{proposition}[theorem]{Proposition}
\theoremstyle{definition}
\newtheorem{definition}[theorem]{Definition}
\newtheorem{remark}[theorem]{Remark}
\newtheorem{example}[theorem]{Example}
\newtheorem{claim}{Claim}
\newtheorem{case}{Case}
\newtheorem{aplemma}{Lemma}[section]
\newcommand{\apsection}[1]{
  \let\oldthesection\thesection
  \renewcommand{\thesection}{Appendix \oldthesection}
  \section{#1}
  \let\thesection\oldthesection
}
\newcommand{\mrm}{\mathrm}
\newcommand{\mcl}{\mathcal}
\newcommand{\mbf}{\mathbf}
\newcommand{\mbs}{\boldsymbol}
\newcommand{\mbb}{\mathbb}
\newcommand{\mfrk}{\mathfrak}
\newcommand{\nb}{\nobreakdash}
\newcommand{\punc}[1]{\:\text{#1}}
\newcommand{\dquad}{\quad\quad}
\newcommand{\id}[1]{#1}
\newcommand{\del}[1]{}
\newcommand{\sub}[2]{#1_{#2}}
\newcommand{\super}[2]{#1^{#2}}
\newcommand{\binap}[3]{#2\mathbin{#1}#3}
\newcommand{\funap}[2]{#1(#2)}
\newcommand{\funnap}[3]{\funap{\super{#1}{#2}}{#3}}
\newcommand{\bfunap}[3]{\funap{#1}{#2,#3}}
\newcommand{\swhere}{{|}}
\newcommand{\where}{\mathrel{\swhere}}
\newcommand{\juxt}[2]{#1#2}
\newcommand{\sfunin}{{:}}
\newcommand{\funin}{\mathrel{\sfunin}}
\newcommand{\displset}[2]{\left\{\,#1 \,\, \middle\vert \,\, #2\,\right\}}
\newcommand{\smaxx}{\mrm{max}}
\newcommand{\maxx}[1]{\juxt{\smaxx}{\,#1}}
\newcommand{\eq}{=}
\newcommand{\nequiv}{\not\equiv}
\newcommand{\myall}[2]{(\forall#1)(#2)}
\newcommand{\myex}[2]{(\exists#1)(#2)}
\newcommand{\summ}[3]{\sum_{#1}^{#2} #3}
\newcommand{\lt}{<}
\newcommand{\gt}{>}
\renewcommand{\emptyset}{{\varnothing}}
\newcommand{\card}{\length}
\newcommand{\infinity}{\infty}
\newcommand{\ssetfun}{{\to}}
\newcommand{\setfun}{\binap{\ssetfun}}
\newcommand{\setop}[1]{\setfun{#1}{#1}}
\newcommand{\domrestr}[2]{#1{|}_#2}
\newcommand{\congrclasses}[1]{\Z/#1\Z}
\newcommand{\mgroup}{\sub{\Z}}
\newcommand{\dlog}{\sub{\lambda}}
\newcommand{\residues}[2]{[#1]_{#2}}
\newcommand{\ssymgroup}{\mfrk{S}}
\newcommand{\symgroup}{\sub{\ssymgroup}}
\newcommand{\length}[1]{|#1|}
\newcommand{\pairlft}{{\langle}}
\newcommand{\pairrgt}{{\rangle}}
\newcommand{\pairsep}{{,\,}}
\newcommand{\pairstr}[1]{\pairlft#1\pairrgt}
\newcommand{\pair}[2]{\pairstr{#1\pairsep#2}}
\newcommand{\triple}[2]{\pair{#1\pairsep#2}}
\newcommand{\nat}{\mbb{N}}
\newcommand{\natpos}{\sub{\nat}{{\gt0}}}
\newcommand{\posnat}{\natpos}
\newcommand{\Z}{\mathbb{Z}}
\newcommand{\nth}{\funap}
\newcommand{\bit}{\nalph{2}}
\newcommand{\inv}{\overline}
\newcommand{\str}[1]{\super{#1}{\nat}}
\newcommand{\strp}[1]{\super{#1}{\posnat}}
\newcommand{\astr}{\sigma}
\newcommand{\iastr}{\sub{\astr}}
\newcommand{\bstr}{\tau}
\newcommand{\llist}[1]{\super{#1}{\infinity}}
\newcommand{\aalph}{\Sigma}
\newcommand{\wrd}[1]{\super{#1}{\ast}}
\newcommand{\wrdemp}{\varepsilon}
\newcommand{\newrd}[1]{\super{#1}{{+}}}
\newcommand{\awrd}{w}
\newcommand{\cat}{\juxt}
\newcommand{\sreverse}{R}
\newcommand{\reverse}[1]{\super{#1}{\sreverse}}
\newcommand{\thuemorse}{\mbox{Thue\hspace{.08em}--Morse}}
\newcommand{\pd}{period doubling}
\newcommand{\Sierpinski}{Sierpi\'{n}ski}
\newcommand{\sdiff}{\Delta}
\newcommand{\diff}{\funap{\sdiff}}
\newcommand{\sTdiff}{\Delta_{\stoeplitz}}
\newcommand{\Tdiff}{\funap{\sTdiff}}
\newcommand{\sTsum}{\int_{\stoeplitz}}
\newcommand{\Tsum}{\funap{\sTsum}}
\newcommand{\tsp}{\hspace{.08em}}
\newcommand{\ntsp}{\hspace{-.08em}}
\newcommand{\asub}[3]{#1_{#2,#3}}
\newcommand{\sdiv}{{/}}
\renewcommand{\div}[2]{#1\sdiv#2}
\newcommand{\ssimilar}{{\thicksim}}
\newcommand{\similarx}{\mathrel{\ssimilar}}
\newcommand{\dragon}{\mbs{f}}
\newcommand{\PF}{\dragon}
\newcommand{\PD}{\mbs{p}}
\newcommand{\morse}{\mbs{m}}
\newcommand{\terdragon}{\mbs{t}}
\newcommand{\hanoi}{\mbs{h}}
\newcommand{\mephisto}{\mbs{w}}
\newcommand{\sierpinski}{\mbs{s}}
\newcommand{\primes}{\mbf{P}}
\newcommand{\sspadval}{v}
\newcommand{\spadval}{\sub{\sspadval}}
\newcommand{\padval}[1]{\funap{\spadval{#1}}}
\newcommand{\Tsubst}[2]{#1[#2]}
\newcommand{\sTcmp}{{\circ}}
\newcommand{\Tcomp}{\binap{\sTcmp}}
\newcommand{\Tcmpx}{\mathbin{\sTcmp}}
\newcommand{\Tpow}[2]{\super{#1}{(#2)}}
\newcommand{\stoeplitz}{T}
\newcommand{\toeplitz}{\funap{\stoeplitz}}
\newcommand{\toeplitzi}[1]{\funap{\sub{\stoeplitz}{#1}}}
\newcommand{\stoeplitzeq}{\sub{{=}}{\stoeplitz}}
\newcommand{\toeplitzeq}{\mathrel{\stoeplitzeq}}
\newcommand{\gap}{{?}}
\newcommand{\fgap}{\id}
\newcommand{\agap}{\fgap{f}}
\newcommand{\bgap}{\fgap{g}}
\newcommand{\cgap}{\fgap{h}}
\newcommand{\apat}{P}
\newcommand{\bpat}{Q}
\newcommand{\rotgap}[1]{{\super{\gap}{{+}#1}}}
\newcommand{\galph}[1]{#1\cup\symgroup{#1}}
\newcommand{\gaalph}{\galph{\aalph}}
\newcommand{\nrofin}[2]{\sub{\length{#2}}{#1}}
\newcommand{\gapnr}{\nrofin{\gap}}
\newcommand{\oeis}[1]{\cite[\href{http://oeis.org/classic/#1}{\small{\texttt{#1}}}]{sloa:2010}}
\newcommand{\nalph}{\sub{\aalph}}
\newcommand{\sAS}{\mcl{AS}}
\newcommand{\siAS}{\super{\sAS}}
\newcommand{\AS}{\funap{\sAS}}
\newcommand{\iAS}{\texcomp{\funap}{\siAS}}
\newcommand{\ampy}{\mu}
\newcommand{\iampy}{\sub{\ampy}}
\newcommand{\spgs}{\mbs{\sspadval}}
\newcommand{\pgs}[1]{\sub{\spgs}{\ntsp#1}}
\newcommand{\sKmult}{{\times}}
\newcommand{\Kmult}{\binap{\sKmult}}
\newcommand{\Kpow}[2]{\super{#1}{(#2)}}
\newcommand{\skeane}{K}
\newcommand{\keane}{\funap{\skeane}}
\newcommand{\ablk}{u}
\newcommand{\bblk}{v}
\newcommand{\slcm}{\mrm{lcm}}
\newcommand{\lcm}{\bfunap{\slcm}}
\newcommand{\sgcd}{\mrm{gcd}}
\renewcommand{\gcd}{\bfunap{\sgcd}}
\newcommand{\sfuncomp}{{\circ}}
\newcommand{\funcomp}{\binap{\sfuncomp}}
\newcommand{\nbd}{\nobreakdash}
\newcommand{\hyph}[2]{#1\nbd-#2}
\newcommand{\ssetxor}{\Delta}
\newcommand{\setxorx}{\mathbin{\ssetxor}}
\newcommand{\abs}[1]{|#1|}
\newcommand{\gtimes}[2]{#1 \odot #2}
\newcommand{\ghom}[1]{\odot #1}
\newcommand{\gscale}{\bfunap{\xi}}
\newcommand{\ssign}{\mrm{sign}}
\newcommand{\sign}{\funap{\ssign}}
\title{Arithmetic Self-Similarity of Infinite Sequences~%
  \footnote{This research has been partially funded by 
    the Netherlands Organisation for Scientific Research (NWO)
    under grant numbers 612.000.934 and 639.021.020.%
  }
}
\author{Dimitri Hendriks 
  \and Frits G.W. Dannenberg
  \and J\"{o}rg Endrullis \\[.5ex]
  \and Mark~Dow
  \and Jan Willem Klop
}
\begin{document}

\maketitle

\begin{abstract}
  \noindent
  We define the arithmetic self-similarity (AS) of 
  a one-sided infinite sequence~$\astr$ 
  to be the set of arithmetic subsequences of~$\astr$ 
  which are a vertical shift of $\astr$.
  We study the AS 
  of several families of sequences,
  viz.\ completely additive sequences, Toeplitz words 
  and Keane's generalized Morse sequences.
  We give a complete characterization of the AS of completely additive sequences, 
  and classify the set of single-gap Toeplitz patterns 
  that yield completely additive Toeplitz words.
  We show that every arithmetic subsequence of a Toeplitz word 
  generated by a one-gap pattern is again a Toeplitz word.
  Finally, we establish that generalized Morse sequences 
  are specific sum-of-digits sequences,
  and show that their first difference is a Toeplitz word.
  
\end{abstract}

\section{Introduction}
Some infinite sequences are similar to a part of themselves. 
Zooming in on a part of the structure reveals the whole structure again. 
Of special interest are what we may call \emph{scale-invariant} sequences.
A sequence $\mbs{w} = (w(n))_{n \ge 1}$ 
(over some additive group~$\aalph$)
is scale-invariant if for all `dilations' $k\geq1$ 
the compressed sequence 
$\div{\mbs{w}}{k} = (w(kn))_{n \ge 1}$ 
is a `vertical shift' of $\astr$, that is,
$w(kn) - w(n)$ is constant.
An example of a scale-invariant sequence is the \pd{} sequence~$\PD$ 
which can be defined as $\PD = (\padval{2}{n} \bmod{2})_{n\ge1}$
with $\padval{2}{n}$ the $2$-adic valuation of $n$.
We have that $\div{\PD}{k} = \PD + \PD(k) \bmod 2$ for every $k\geq1$:
\newcommand{\z}{\phantom}
  \begin{align*}
  \PD 
  & \,\,\,=\,\,\,
  0\,1\,0\,0\,0\,1\,0\,1\,0\,1\,0\,0\,0\,1\,0\,0\,
  0\,1\,0\,0\,0\,1\,0\,1\,0\,1\,0\,0\,0\,1\,0\,1\,
  0\,1\,0\,0\,0\,1\,0\,1\,0\,1\,0\,
  \ldots
  \\
  \div{\PD}{2}
  & \,\,\,=\,\,\,
  \z{0}\,1\,\z{0}\,0\,\z{0}\,1\,\z{0}\,1\,\z{0}\,1\,\z{0}\,0\,\z{0}\,1\,\z{0}\,0\,
  \z{0}\,1\,\z{0}\,0\,\z{0}\,1\,\z{0}\,1\,\z{0}\,1\,\z{0}\,0\,\z{0}\,1\,\z{0}\,1\,
  \z{0}\,1\,\z{0}\,0\,\z{0}\,1\,\z{0}\,1\,\z{0}\,1\,\z{0}\,
  \ldots
  \\
  \div{\PD}{3}
  & \,\,\,=\,\,\,
  \z{0\,1}\,0\,\z{0\,0}\,1\,\z{0\,1}\,0\,\z{1\,0}\,0\,\z{0\,1}\,0\,\z{0\,
  0}\,1\,\z{0\,0}\,0\,\z{1\,0}\,1\,\z{0\,1}\,0\,\z{0\,0}\,1\,\z{0\,1}\,
  0\,\z{1\,0}\,0\,\z{0\,1}\,0\,\z{1\,0}\,1\,\z{0}\,
  \ldots
  \\
  \div{\PD}{5}
  & \,\,\,=\,\,\,
  \z{0\,1\,0\,0}\,0\,\z{1\,0\,1\,0}\,1\,\z{0\,0\,0\,1}\,0\,\z{0\,
  0\,1\,0}\,0\,\z{0\,1\,0\,1}\,0\,\z{1\,0\,0\,0}\,1\,\z{0\,1\,}
  \z{0\,1}\,0\,\z{0\,0\,1\,0}\,1\,\z{0\,1\,0}\,
  \ldots
\end{align*}
%
Scale-invariant sequences $\astr$ with first term $\astr_1 = 0$ 
are known as \emph{completely additive} sequences, 
that is, sequences~$\astr$ such that 
$\nth{\astr}{nm} = \nth{\astr}{n} + \nth{\astr}{m}$ for all positive integers $n,m$.


In general, given an infinite sequence~$\astr$ one may wonder 
what are the arithmetic subsequences of $\astr$ similar to $\astr$ itself.
This leads to the notion of what we call `arithmetic self-similarity':
For an equivalence relation~$\ssimilar$ on $\aalph^\omega$,
the \emph{arithmetic self-similarity} 
of a sequence $\astr\in\aalph^\omega$,
which we denote by~$\iAS{\ssimilar}{\astr}$, 
is the set of pairs $\pair{a}{b}$ 
such that the subsequence of $\astr$ 
indexed by the arithmetic progression $a + bn$
is equivalent to $\astr$\,: 
\[
  \iAS{\ssimilar}{\astr} = \displset{\pair{a}{b}}{\asub{\astr}{a}{b} \similarx \astr}
  \dquad 
  \text{where $\asub{\astr}{a}{b} = \astr(a) \, \astr(a+b) \, \astr(a+2b) \, \ldots$} 
\]
For instance, in \cite{endr:hend:klop:2011} it is shown that 
the arithmetic self-similarity of the Thue--Morse sequence 
$\morse = 0110 1001 \cdots$~\cite{allo:shal:1999}
with respect to `transducer-equivalence' $\diamond$ 
is the full space $\iAS{\diamond}{\morse} = \nat\times\posnat$,
i.e., for every arithmetic subsequence $\asub{\morse}{a}{b}$ of $\morse$ 
there is a finite state transducer 
which reconstructs $\morse$ from $\asub{\morse}{a}{b}$.

In this paper we focus on cyclic groups $\triple{\aalph}{{+}}{0}$ 
and on a particular equivalence~$\ssimilar$ on sequences over $\aalph$,
namely $\astr \similarx \bstr$ if and only if $\astr = \bstr +  c$ for some $c \in \aalph$.
%
%
%
Our main results are as follows:
\begin{itemize}
  
  \item
    We give a complete characterization of the arithmetic self-similarity 
    of completely additive sequences~$\astr\in\strp{\aalph}$;
    we show that 
    $\pair{a}{b} \in \AS{\astr}$ if and only if $a = b$.

  \item
    We define Toeplitz patterns to be finite words over $\gaalph$,
    where $\symgroup{\aalph}$ is the set of permutations over $\aalph$ 
    which play the role of `gaps'. 
    The composition operation defined on Toeplitz patterns 
    (Definition~\ref{def:toeplitz}) forms a monoid (Proposition~\ref{prop:Tpat:monoid}).

  \item 
    We give a complete characterization of one-gap Toeplitz patterns 
    that yield completely additive sequences (Theorem~\ref{thm:superthm}).
    These patterns are constructed using discrete logarithms.
    The completely additive sequences they generate are determined by infinite sets of primes.
    
  \item
    We show that every arithmetic subsequence of 
    a Toeplitz word generated by a one-gap pattern
    is again a Toeplitz word 
    (Theorem~\ref{thm:arith:toeplitz}).
    
%
        
  \item 
    We prove that the first difference of a generalized Morse sequence~\cite{kean:1968} 
    is a Toeplitz word (Theorem~\ref{thm:diff:keane}).
    This gives rise to an embedding from Keane's monoid of block products
    (extended to additive groups) to that of Toeplitz pattern composition 
    (Theorem~\ref{thm:Tdiff:embedding}).

  \item 
    We show how generalized Morse sequences~\cite{kean:1968} 
    are specific sum-of-digits sequences (Theorem~\ref{thm:keane:digits}).
      
  \item 
    For the \thuemorse{} sequence~$\morse = 0110 1001 \ldots$ we show that 
    $\morse(a + bn) = \morse(n) + \morse(a) \bmod{2}$ ($n\in\nat$) 
    if and only if $0 \le a \lt b = 2^m$ for some $m\in\nat$
    (Theorem~\ref{thm:AS:morse}). 

\end{itemize}

\section{Basic Definitions}\label{sec:prelims} 
We write $\nat = \{0,1,2,\ldots\}$ for the set of natural numbers,
and $\posnat$ for the set of positive integers,
$\posnat = \nat \setminus \{0\}$.
For $k \geq 0$ we define $\nalph{k} = \{0,\ldots,k-1\}$.
The sets of finite words and finite non-empty words over an alphabet~$\aalph$
are denoted by $\wrd{\aalph}$ and $\newrd{\aalph}$, respectively. 
We identify a (one-sided) infinite word 
with a map from $\nat$ to $\aalph$, 
or, when this is more convenient, from $\posnat$ to $\aalph$.
We write $\str{\aalph}$, or $\strp{\aalph}$, 
for the set of infinite words over $\aalph$:
\begin{align*}
  \str{\aalph} = \displset{\astr}{\astr\funin\nat\to\aalph}
  &&
  \strp{\aalph} = \displset{\astr}{\astr\funin\posnat\to\aalph}
\end{align*}
We let $\llist{\aalph}$ denote the set 
of all finite and infinite words over $\aalph$.
We write $\wrdemp$ for the empty word, and $\cat{x}{y}$
for concatenation of words $x\in\wrd{\aalph}$, $y\in\llist{\aalph}$.
For $u\in\wrd{\aalph}$ we let $u^0 = \wrdemp$ and $u^{k+1} = u u^k$,
and, for $u\in\newrd{\aalph}$ we write $u^\omega$ for the infinite sequence $uuu\ldots$.
For a word~$w\in\llist{\aalph}$ we use $\nth{w}{n}$ 
to denote the value at index $n$ (if defined).
The \emph{reversal} $w^R$ of $w \in \wrd{\aalph}$ is defined by 
$\wrdemp^R = \wrdemp$ and $(au)^R = u^R a$,
for all $a\in\aalph$ and $u\in\wrd{\aalph}$.

We shall primarily deal with infinite words over finite cyclic groups.
A cyclic group is a group generated by a single element (or its inverse).
Every finite cyclic group of order~$k$ is isomorphic to the additive group 
$\triple{\nalph{k}}{{+}}{0}$ with $+$ denoting addition modulo~$k$.

Let $\aalph = \triple{\aalph}{{+}_{\aalph}}{0_{\aalph}}$ 
be a finite cyclic group.
Let $\awrd\in\llist{\aalph}$, and $a\in\aalph$.
Then $\awrd +_{\aalph} a$ is defined by 
$\nth{(\awrd +_{\aalph} a)}{n} = \nth{\awrd}{n} +_{\aalph} a$, 
whenever $\nth{\awrd}{n}$ is defined.
Furthermore, we write $\astr +_{\aalph} \bstr$ 
for the sequence obtained from $\astr,\bstr\in\str{\aalph}$ 
by pointwise addition: 
$\nth{(\astr +_{\aalph} \bstr)}{n} = \nth{\astr}{n} +_{\aalph} \nth{\bstr}{n}$ 
for all $n \in \nat$;
so $\astr +_{\aalph} a = \astr +_{\aalph} a^\omega$.
When the group~$\aalph$ is clear from the context,
we will write just $+$ and $0$ for $+_{\aalph}$ and $0_{\aalph}$.

For $k \in \nalph{n}$ (or $k \in \nat$) and $c \in \nalph{m}$ (with $n, m \in \posnat$),
we write $\gtimes{k}{c}$ for the letter $c + c + \cdots + c$ ($k$ times) in the alphabet $\nalph{m}$.
For words $w = a_0 \ldots a_r \in \wrd{\nalph{n}}$ we use
$\gtimes{w}{c}$ to denote the word $(\gtimes{a_0}{c})\ldots(\gtimes{a_r}{c}) \in \nalph{m}^*$.
Likewise for infinite sequences $\astr = a_0 a_1 \ldots \in \str{\nalph{n}}$ we write
$\gtimes{\astr}{c}$ for 
$(\gtimes{a_0}{c})(\gtimes{a_1}{c})\ldots \in \str{\nalph{m}}$.
Finally, by the partial application
$\ghom{c}$ we denote the map from $\nalph{n}$ to $\nalph{m}$
defined by $k \mapsto \gtimes{k}{c}$ for all $k \in \nalph{n}$.
For $n, m \in \posnat$ we define 
\[
  \gscale{n}{m} = \frac{m}{\gcd{n}{m}} = \frac{\lcm{n}{m}}{n}
\]

\begin{lemma}\label{lem:ghom}
  Let $c \in \nalph{m}$
  s.t.\ $c$ is divisible by $\gscale{n}{m}$,
  i.e.,
  $c = \gtimes{\gscale{n}{m}}{c'}$
  for some $c'\in\nalph{m}$.
  Then the map $\ghom{c}$ is a group homomorphism 
  from $\triple{\nalph{n}}{{+_{\nalph{n}}}}{0_{\nalph{n}}}$ to $\triple{\nalph{m}}{{+_{\nalph{m}}}}{0_{\nalph{m}}}$.
\end{lemma}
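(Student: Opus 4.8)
The plan is to isolate the one arithmetic fact that does all the work — the divisibility $m \mid nc$ — and then read off the homomorphism property of $\ghom{c}$ by elementary modular arithmetic. Recall that $\ghom{c}$ is the map $k \mapsto \gtimes{k}{c}$, that is, $k \mapsto kc \bmod m$ on the representatives $k \in \{0,\ldots,n-1\}$; the only thing that can go wrong is the wrap-around when one adds in $\nalph{n}$, and $m \mid nc$ is exactly what repairs it.

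First I would prove $m \mid nc$. The hypothesis $c = \gtimes{\gscale{n}{m}}{c'}$ should be upgraded to the honest integer statement $\gscale{n}{m} \mid c$: since $\gscale{n}{m} = m/\gcd{n}{m}$ is itself a divisor of $m$, reducing the integer $\gscale{n}{m}\cdot c'$ modulo $m$ stays inside the set of multiples of $\gscale{n}{m}$, so $c = \gscale{n}{m}\cdot d$ for some $d \in \nat$. Then, using the second formula $\gscale{n}{m} = \lcm{n}{m}/n$ from the display preceding the lemma, one gets $nc = \bigl(n\cdot\gscale{n}{m}\bigr)\,d = \lcm{n}{m}\cdot d$, and since $m \mid \lcm{n}{m}$ this gives $m \mid nc$.

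Given this, I would finish by observing that the map $\Z \to (\nalph{m},+_{\nalph{m}},0_{\nalph{m}})$ sending $k \mapsto kc \bmod m$ is visibly a group homomorphism that kills $n\Z$ (by the divisibility just established), hence factors through $\Z/n\Z \cong (\nalph{n},+_{\nalph{n}},0_{\nalph{n}})$, the induced map being precisely $\ghom{c}$. Unwound, this amounts to two checks: $\ghom{c}(0) = \gtimes{0}{c} = 0$, and for $k_1,k_2 \in \nalph{n}$ we have $k_1 +_{\nalph{n}} k_2 = k_1 + k_2 - \varepsilon n$ with $\varepsilon \in \{0,1\}$ (because $k_1,k_2 < n$), so
\[
  \ghom{c}(k_1 +_{\nalph{n}} k_2) = (k_1+k_2-\varepsilon n)\,c \bmod m = (k_1 c + k_2 c - \varepsilon\,nc) \bmod m = (k_1 c + k_2 c) \bmod m ,
\]
where the last step uses $m \mid nc$, and the right-hand side equals $(\gtimes{k_1}{c}) +_{\nalph{m}} (\gtimes{k_2}{c}) = \ghom{c}(k_1) +_{\nalph{m}} \ghom{c}(k_2)$.

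There is no deep obstacle here; the crux is the divisibility $m \mid nc$, that is, spotting that the hypothesis on $c$ combines with $n\cdot\gscale{n}{m} = \lcm{n}{m}$ to yield it. The one point that needs a moment's care is the upgrade from $c = \gtimes{\gscale{n}{m}}{c'}$ — a congruence modulo $m$ — to $\gscale{n}{m} \mid c$ over $\Z$, which relies on $\gscale{n}{m}$ dividing $m$. It is also worth noting that $m \mid nc$ is not just sufficient but necessary for $\ghom{c}$ to be a homomorphism, so the hypothesis on $c$ cannot be relaxed.
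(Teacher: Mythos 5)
Your proof is correct and follows essentially the same route as the paper: the crux in both is that $n\cdot\gscale{n}{m}$ is a multiple of $m$ (your $m \mid nc$), after which the wrap-around case $k_1+k_2 \ge n$ is handled by the same one-line modular computation. Your explicit justification that the representative $c \in \nalph{m}$ is an honest integer multiple of $\gscale{n}{m}$ is a small point the paper's proof leaves implicit, but it does not change the argument.
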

\begin{proof}
  Obviously $\gtimes{0_{\nalph{n}}}{c} = 0_{\nalph{m}}$.
  Note that 
  $\gtimes{n}{c} 
   = \gtimes{n}{(\gtimes{\gscale{n}{m}}{c'})} 
   = \gtimes{(n \cdot \gscale{n}{m})}{c'}
   = 0_{\nalph{m}}$
  since $n \cdot \gscale{n}{m}$ is a multiple of $m$.
  Let $k, \ell \in \nalph{n}$ arbitrary.
  If $k +_\nat \ell < n$ 
  (here $+_\nat$ indicates that we use addition on the natural numbers),
  then trivially $\gtimes{(k +_{\nalph{n}} \ell)}{c} = \gtimes{k}{c} +_{\nalph{m}} \gtimes{\ell}{c}$.
  Otherwise $k +_\nat \ell = n +_\nat u$ for some $u \in \nalph{n}$
  and 
  $\gtimes{k}{c} +_{\nalph{m}} \gtimes{\ell}{c} 
   = \gtimes{n}{c} +_{\nalph{m}} \gtimes{u}{c}
   = \gtimes{u}{c}
   = \gtimes{(k +_{\nalph{n}} \ell)}{c}$.
\end{proof}

Let $\astr\in\str{\aalph}$ be an infinite sequence, 
and let $a\ge 0$ and $b \ge 1$ be integers. 
The \emph{arithmetic subsequence} $\asub{\astr}{a}{b}$ of $\astr$ is defined by%
  \footnote{%
    For infinite sequences $\astr\in\strp{\aalph}$ (indexed over the positive integers)
    we require both $a$ and $b$ to be positive integers, and then
    $\nth{\asub{\astr}{a}{b}}{n} = \nth{\astr}{a + b(n-1)}$ for all $n\in\posnat$.%
  }%
\begin{align*}
  \nth{\asub{\astr}{a}{b}}{n} = \nth{\astr}{a + bn} && (n\in\nat)
\end{align*}


By composition of arithmetic progressions we have 
$\asub{(\asub{\astr}{a}{b})}{c}{d} = \asub{\astr}{a+bc}{bd}$ for all $a,c\ge 0$ and $b,d\ge 1$.

A sequence $\astr\in\str{\aalph}$ is \emph{ultimately periodic}
if there are $n_0\in\nat$ and $t\in\posnat$ such that 
$\astr(n+t) = \astr(n)$ for all $n \ge n_0$.
Here $t$ is called the \emph{period} 
and $n_0$ the \emph{preperiod}.
If $n_0 = 0$, $\astr$ is called \emph{(purely) periodic}.

\begin{lemma}\label{lem:periodic:subseq}
  Every arithmetic subsequence of an ultimately periodic sequence
  is ultimately periodic.
\end{lemma}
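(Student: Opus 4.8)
The plan is to prove the statement directly and, in fact, to show that the period~$t$ of the original sequence already serves as a period of every arithmetic subsequence. So let $\astr\in\str{\aalph}$ be ultimately periodic with period $t\in\posnat$ and preperiod $n_0\in\nat$, that is, $\astr(n+t)=\astr(n)$ for all $n\ge n_0$, and fix $a\ge0$ and $b\ge1$. The aim is to produce $n_1\in\nat$ and a period $t'\in\posnat$ with $\nth{\asub{\astr}{a}{b}}{n+t'}=\nth{\asub{\astr}{a}{b}}{n}$ for all $n\ge n_1$.

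First I would unfold the definitions: since $\nth{\asub{\astr}{a}{b}}{n}=\astr(a+bn)$, the goal becomes $\astr(a+bn+bt')=\astr(a+bn)$ for all $n\ge n_1$. The natural choice is $t'=t$, so that the induced shift $bt'=bt$ is a multiple of~$t$; then, applying the identity $\astr(m+t)=\astr(m)$ exactly $b$ times starting from $m=a+bn$, one telescopes to $\astr(a+bn+bt)=\astr(a+bn)$, provided that all the indices $a+bn,\ a+bn+t,\ \dots,\ a+bn+(b-1)t$ passed through lie in the periodic range, i.e.\ provided $a+bn\ge n_0$.

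Second, the inequality $a+bn\ge n_0$ holds for every $n\ge\lceil(n_0-a)/b\rceil$, so taking $n_1=\max\{0,\lceil(n_0-a)/b\rceil\}$ makes it hold for all $n\ge n_1$. This gives $\nth{\asub{\astr}{a}{b}}{n+t}=\nth{\asub{\astr}{a}{b}}{n}$ for all $n\ge n_1$, i.e.\ $\asub{\astr}{a}{b}$ is ultimately periodic with period~$t$ and preperiod~$n_1$. For sequences indexed over $\posnat$ the argument is identical, using the footnote convention $\nth{\asub{\astr}{a}{b}}{n}=\astr(a+b(n-1))$.

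There is no genuine obstacle here; the only subtleties are that it is the shift $bt$, not $t$, that restores phase after a single step of the subsequence, and that the telescoping must remain inside $[n_0,\infty)$, which is exactly what forces the stated bound on the preperiod. One may additionally observe that the minimal period of $\asub{\astr}{a}{b}$ divides $\div{t}{\gcd{b}{t}}$, but that refinement is not needed for the statement.
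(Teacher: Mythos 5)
Your proof is correct and follows essentially the same route as the paper: choose a shift $t'$ so that $bt'$ is a multiple of $t$ and a preperiod large enough that $a+bn\ge n_0$. The only (immaterial) difference is that you take $t'=t$ outright, whereas the paper allows any $t'\le t$ with $bt'\equiv 0\pmod t$, which can yield a smaller period but changes nothing in the argument.
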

\begin{proof}
  Let $\astr\in\str{\aalph}$, $n_0\in\nat$, $t\in\posnat$
  with $\myall{n \ge n_0}{\astr(n+t) = \astr(n)}$.
  Further let $a\in\nat$, $b\in\posnat$. 
  We show that $\asub{\astr}{a}{b}$ is ultimately periodic.
  Let $n'_0\in\nat$ be such that $a + b n'_0 \ge n_0$,
  and let $t' \in \posnat$ with $bt' \equiv 0 \pmod t$ 
  (such $t'$ with $t' \le t$ always exists).
  Then we have 
  $\asub{\astr}{a}{b}(n+t') = \astr(a + bn + bt') = \astr(a + bn) = \asub{\astr}{a}{b}(n)$
  for all integers $n\ge n'_0$.
\end{proof}



We use $\primes = \{2,3,5,\ldots\}$ for the set of prime numbers.
We write 
$\residues{a}{k}$ for the congruence (or residue) class of $a$ modulo~$k$:
$\residues{a}{k} = \{ n \in \nat \where n \equiv a \pmod{k} \}$.
The set of residue classes modulo $k$ is denoted by $\congrclasses{k}$.
Moreover, for the union of 
$\residues{a_1}{k},\ldots,\residues{a_q}{k} \in \congrclasses{k}$
we write 
\[ 
  \residues{a_1,\ldots,a_q}{k}
  =
  \residues{a_1}{k} \cup \cdots \cup \residues{a_q}{k}
\]
We let $\mgroup{k} = (\congrclasses{k})^{\times} = \{ \, \residues{h}{k} \where \gcd{h}{k} = 1 \}$,
the multiplicative group of integers modulo~$k$.
%
A \emph{primitive root modulo~$k$} is a generator of the group $\mgroup{k}$.
That is, a primitive root is an element~$g$ of $\mgroup{k}$ 
such that for all $a\in\mgroup{k}$ 
there is some integer~$e$ with  $a \equiv g^e \pmod k$.
For integers~$e_1,e_2$ we have that 
$g^{e_1} \equiv g^{e_2} \pmod{k}$ if and only if $e_1 \equiv e_2 \pmod{k}$,
and so we have a bijection $e \mapsto g^e$ on $\mgroup{k}$, 
called \emph{discrete exponentation (with base~$g$)}.
Its inverse, $a \mapsto \log_g(a) = e$ with $e$ such that $a \equiv g^e \pmod{k}$,
is called \emph{discrete logarithm (to the base~$g$)}.

Groups 
\( 
  \mgroup{p} 
  = \{\,
      \residues{1}{p}\,,
      \residues{2}{p}\,,
      \ldots,
      \residues{p-1}{p}
    \}
  \simeq \{ 1, 2 , \dots , p-1 \}
\)
with $p$ an odd prime number contain all nonzero residue classes modulo~$p$, 
always have a primitive root and are cyclic.
For example the primitive roots modulo~$7$ are $3$ and $5$.
The powers of $3$ modulo $7$ are $3, 2, 6, 4, 5, 1, 3 \ldots$\,, 
and of $5$ they are $5, 4, 6, 2, 3, 1, 5, \ldots$, 
listing every number modulo~$7$ (except~$0$). 
On the other hand, $2$ is not a primitive root modulo~$7$: 
the powers of~$2$ modulo~$7$ are $2, 4, 1, 2, \ldots$,
missing several values from $\{1,\ldots,6\}$.
Note that a primitive root is not necessarily a prime number,
e.g., $6$ is a primitive root modulo~$11$.

\begin{lemma}\label{lem:discrete}
  \mbox{}
  \begin{enumerate}
    \item (Base change)\label{item:lem:discrete:base:change}
      Let $g$, $h$ be primitive roots mod $n$, $a \in \mgroup{n}$.
      Then $\log_{g}(a) = \log_{g}(h) \cdot \log_{h}(a)$.
    \item (Fermat's little theorem)\label{item:lem:discrete:fermat}
      Let $p$ be a prime number and $a\in\mgroup{p}$. 
      Then $a^{p-1} \equiv 1 \pmod p$.
    \item\label{item:lem:discrete:exp:-1}
      Let $p > 2$ be prime and $g$ be a primitive root modulo~$p$. Then
      $g^{(p-1)/2} \equiv -1 \pmod p$.
  \end{enumerate}
\end{lemma}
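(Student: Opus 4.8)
The plan is to prove the three items in the order (ii), (iii), (i): part~(iii) rests on Fermat, and part~(i) is a purely formal manipulation of the discrete-logarithm identities recalled just above the lemma. Before that I would isolate one auxiliary observation used twice below, namely that a primitive root $g$ of $\mgroup{p}$ has multiplicative order exactly $p-1$: its powers $g^{0},\dots,g^{p-2}$ are pairwise incongruent modulo $p$ (an earlier repetition would make $\langle g\rangle$ strictly smaller than $\mgroup{p}$), and they must exhaust the $(p-1)$-element group $\mgroup{p}$, so there are exactly $p-1$ of them.

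For part~(ii) I would run the classical rearrangement argument. Fix $a\in\mgroup{p}$; since $\gcd{a}{p}=1$, the map $x\mapsto ax$ is injective on $\{1,\dots,p-1\}$ by cancellation modulo $p$, hence a bijection of that set onto itself. Comparing the two ways of writing the resulting product gives
\[
  a^{p-1}\cdot(p-1)!\;\equiv\;\prod_{x=1}^{p-1}(ax)\;\equiv\;\prod_{x=1}^{p-1}x\;=\;(p-1)!\pmod p,
\]
and since $(p-1)!$ is a product of units modulo $p$ it may be cancelled, yielding $a^{p-1}\equiv1\pmod p$. (Alternatively, writing $a\equiv g^{e}$ for a primitive root $g$, the auxiliary observation gives $g^{p-1}\equiv1$, whence $a^{p-1}=(g^{p-1})^{e}\equiv1$.)

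For part~(iii), note that $p$ odd makes $(p-1)/2$ a positive integer, and set $x\equiv g^{(p-1)/2}\pmod p$. By part~(ii), $x^{2}\equiv g^{p-1}\equiv1\pmod p$, so $p\mid(x-1)(x+1)$, and primality of $p$ forces $x\equiv1$ or $x\equiv-1\pmod p$. The case $x\equiv1$ is excluded by the auxiliary observation: $g^{(p-1)/2}\equiv g^{0}$ would require $p-1\mid(p-1)/2$, which is impossible because $0<(p-1)/2<p-1$. Hence $x\equiv-1\pmod p$, which is a genuine (nontrivial) value since $p>2$.

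For part~(i), by definition of the discrete logarithm $h\equiv g^{\log_{g}(h)}$ and $a\equiv h^{\log_{h}(a)}$ modulo $n$, so $a\equiv\bigl(g^{\log_{g}(h)}\bigr)^{\log_{h}(a)}=g^{\log_{g}(h)\cdot\log_{h}(a)}\pmod n$; comparing with $a\equiv g^{\log_{g}(a)}$ and using that discrete exponentiation with base $g$ is a bijection on $\mgroup{n}$ (so the exponent is determined modulo the order of $\mgroup{n}$) gives $\log_{g}(a)=\log_{g}(h)\cdot\log_{h}(a)$. There is no real obstacle in any of this; the only points deserving care are (a) reading the equality in part~(i) modulo the order of $\mgroup{n}$ rather than as an equality of integers, and (b) making the ``order exactly $p-1$'' observation explicit once at the start, since both the short proof of~(ii) and all of~(iii) lean on it.
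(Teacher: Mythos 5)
Your proofs are correct. The paper disposes of (i) and (ii) with the single word ``Folklore'' and only sketches (iii), so most of what you wrote has no counterpart in the paper at all; your rearrangement proof of Fermat and your formal derivation of the base-change identity (with the correct caveat that the exponent equality lives modulo the order of the group, not modulo $n$) are exactly the standard arguments one would supply. For (iii) your route is a mild variant of the paper's: the paper starts from the unique $k$ with $g^{k}\equiv-1\pmod p$ (which exists because $g$ is a primitive root and $-1$ is a unit), squares to get $g^{2k}\equiv1$, and concludes $2k\equiv0\pmod{p-1}$, hence $k=(p-1)/2$; you instead start from $x=g^{(p-1)/2}$, use $x^{2}\equiv1$ together with primality to get $x\equiv\pm1$, and rule out $x\equiv1$ via the order of $g$. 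The two arguments use the same two facts (Fermat and the order of $g$ being exactly $p-1$), just traversed in opposite directions; yours has the small advantage of not needing to first establish that $-1$ is attained as a power of $g$, and of being stated without the paper's slip of writing the congruence on exponents modulo $p$ rather than modulo $p-1$. Nothing is missing.
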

\begin{proof}
  Folklore.
  We only prove item~\eqref{item:lem:discrete:exp:-1}:
  by $g$ being a primitive root we know  $g^k \equiv -1 \pmod{p}$ 
  for some unique $1 \le k \le p-1$,
  and so $g^{2k} \equiv 1 \pmod{p}$ which in turn implies 
  $2k \equiv p-1 \pmod{p}$ by item~\eqref{item:lem:discrete:fermat}.
\end{proof}

\subsection*{Arithmetic Self-Similarity}

Throughout the paper we fix an arbitrary finite cyclic group
$\triple{\aalph}{{+}}{0}$.

We define the `arithmetic self-similarity' of an infinite sequence $\astr$, 
as the set of pairs $\pair{a}{b}$ 
such that the arithmetic subsequence $\asub{\astr}{a}{b}$ 
is similar to $\astr$, 
where we interpret `similar to' as {`is a vertical shift~of'}. 
\begin{definition}\label{def:AS}
  Let the relation $\ssimilar \subseteq \str{\aalph} \times \str{\aalph}$
  be defined by
  \begin{align*}
    \astr \similarx \bstr \quad\text{if and only if}\quad \astr = \bstr + c \text{ for some $c \in \aalph$}
    && (\astr,\bstr \in \str{\aalph})
  \end{align*} 
  Then, we define the \emph{arithmetic self-similarity} of a sequence $\astr\in\str{\aalph}$ by
  \begin{align*}
    \AS{\astr} = 
    \displset{\pair{a}{b}}{\text{$\asub{\astr}{a}{b} \similarx \astr$}}
  \end{align*}
\end{definition}
Clearly the relation $\ssimilar$ is an equivalence relation on $\str{\aalph}$
as the elements of the group $\aalph$ are invertible.
Equivalent sequences have the same arithmetic self-similarity.

\begin{lemma}\label{lem:AS:nw}
  For all $\astr,\bstr\in\str{\aalph}$,
  if $\astr \similarx \bstr$ 
  then $\AS{\astr} = \AS{\bstr}$.
  \qed
\end{lemma}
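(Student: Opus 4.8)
The plan is to reduce the statement to a single commutation identity: taking an arithmetic subsequence commutes with a vertical shift. First I would record that, for every $\astr\in\str{\aalph}$, every $c\in\aalph$, and all integers $a\ge 0$, $b\ge 1$,
\[
  \asub{(\astr + c)}{a}{b} = \asub{\astr}{a}{b} + c .
\]
This is immediate from the definitions: evaluating both sides at an arbitrary index $n\in\nat$ gives $\nth{(\astr+c)}{a+bn} = \nth{\astr}{a+bn}+c$ on the left, and $\nth{\asub{\astr}{a}{b}}{n}+c = \nth{\astr}{a+bn}+c$ on the right.

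Assume now $\astr\similarx\bstr$ and fix $c\in\aalph$ with $\astr = \bstr + c$. Applying the identity above yields $\asub{\astr}{a}{b} = \asub{\bstr}{a}{b} + c$, hence $\asub{\astr}{a}{b}\similarx\asub{\bstr}{a}{b}$ for every pair $\pair{a}{b}$. So we have the two equivalences $\astr\similarx\bstr$ and $\asub{\astr}{a}{b}\similarx\asub{\bstr}{a}{b}$, and the conclusion follows from a short chain, using that $\ssimilar$ is an equivalence relation (as observed right after Definition~\ref{def:AS}): for all integers $a\ge 0$, $b\ge 1$,
\[
  \pair{a}{b}\in\AS{\astr}
  \iff \asub{\astr}{a}{b}\similarx\astr
  \iff \asub{\bstr}{a}{b}\similarx\bstr
  \iff \pair{a}{b}\in\AS{\bstr},
\]
where the middle step replaces each side of the relation $\asub{\astr}{a}{b}\similarx\astr$ by a $\ssimilar$-equivalent sequence (symmetry and transitivity). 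Therefore $\AS{\astr} = \AS{\bstr}$.

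I do not expect any genuine obstacle here; the only point requiring a little care is the direction of the shift (working with $\astr = \bstr + c$ versus $\bstr = \astr - c$), which is harmless since $\aalph$ is a group and $\ssimilar$ is symmetric. A more pedestrian alternative that avoids invoking symmetry is to prove the two inclusions separately: given $\pair{a}{b}\in\AS{\astr}$, say $\asub{\astr}{a}{b} = \astr + d$, one computes $\asub{\bstr}{a}{b} = \asub{\astr}{a}{b} - c = \astr + d - c = \bstr + d$, so $\pair{a}{b}\in\AS{\bstr}$; swapping the roles of $\astr$ and $\bstr$ gives the reverse inclusion.
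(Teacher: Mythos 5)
Your proof is correct; the paper states this lemma with the proof omitted (marked \qed as immediate), and your argument---that arithmetic subsequencing commutes with vertical shifts, combined with symmetry and transitivity of $\ssimilar$---is exactly the routine verification the authors leave implicit.
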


Arithmetic self-similarity is closed under composition of aritmetic progressions.
\begin{lemma}\label{lem:AS:closed:under:compostion}
  Let $\astr\in\str{\aalph}$ be an infinite sequence 
  and assume
  $\pair{a}{b},\pair{c}{d}\in\AS{\astr}$.
  Then also
  $\pair{a+bc}{bd}\in\AS{\astr}$.%
  \footnote{%
    For sequences $\astr\in\strp{\aalph}$, indexed by the positive integers,
    the statement has to be reformulated: 
    for all $a,b,c,d \in \posnat$,
    if $\pair{a}{b},\pair{c}{d} \in \AS{\astr}$, 
    then $\pair{a+b(c-1)}{bd} \in \AS{\astr}$.%
  }
  \qed
\end{lemma}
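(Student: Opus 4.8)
The plan is to combine two facts that are already available in the excerpt. The first is the composition identity for arithmetic progressions, $\asub{(\asub{\astr}{a}{b})}{c}{d} = \asub{\astr}{a+bc}{bd}$, which is stated just before Lemma~\ref{lem:periodic:subseq}. The second is that a vertical shift commutes with passing to an arithmetic subsequence: for every $\bstr\in\str{\aalph}$ and every $e\in\aalph$ one has $\asub{(\bstr + e)}{c}{d} = \asub{\bstr}{c}{d} + e$. This second fact is immediate from the definitions, since for all $n$, $\nth{\asub{(\bstr+e)}{c}{d}}{n} = \nth{(\bstr+e)}{c+dn} = \nth{\bstr}{c+dn} + e = \nth{\asub{\bstr}{c}{d}}{n} + e$.

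Given these two observations, the proof is a short chain of equalities. By the hypothesis $\pair{a}{b},\pair{c}{d}\in\AS{\astr}$ and the definition of $\similarx$, there are constants $e_1,e_2\in\aalph$ with $\asub{\astr}{a}{b} = \astr + e_1$ and $\asub{\astr}{c}{d} = \astr + e_2$. Then
\[
  \asub{\astr}{a+bc}{bd}
  = \asub{(\asub{\astr}{a}{b})}{c}{d}
  = \asub{(\astr + e_1)}{c}{d}
  = \asub{\astr}{c}{d} + e_1
  = (\astr + e_2) + e_1
  = \astr + (e_1 + e_2),
\]
using the composition identity, the hypothesis on $\pair{a}{b}$, the commutation of shift with subsequencing, and the hypothesis on $\pair{c}{d}$, in that order. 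Hence $\asub{\astr}{a+bc}{bd} \similarx \astr$, which is precisely $\pair{a+bc}{bd}\in\AS{\astr}$.

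I do not expect a genuine obstacle here; the whole content is the bookkeeping of the two shift constants and the fact that they simply add. The only point that requires a little care is the variant stated in the footnote for sequences in $\strp{\aalph}$ indexed over the positive integers, where the composition identity picks up an off-by-one and yields $\asub{\astr}{a+b(c-1)}{bd}$; the same chain of equalities goes through verbatim once that index shift is accounted for.
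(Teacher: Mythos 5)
Your proof is correct and is exactly the routine argument the paper has in mind (the lemma is stated with the proof omitted as immediate): compose the two progressions via the identity $\asub{(\asub{\astr}{a}{b})}{c}{d} = \asub{\astr}{a+bc}{bd}$ and observe that the two shift constants add. The handling of the off-by-one in the $\strp{\aalph}$ variant is also right.
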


If for some $a \ge 1$ the shift $\asub{\astr}{a}{1}$ of a sequence $\astr\in\str{\aalph}$ 
is similar to $\astr$, then $\astr$ is periodic.
\begin{lemma}\label{lem:suffix:in:AS:periodic}
  Let $\astr\in\str{\aalph}$ and $a \ge 1$.  
  If $\pair{a}{1} \in \AS{\astr}$, then $\astr$ is periodic.%
  \footnote{%
    For sequences $\astr\in\strp{\aalph}$ this reads: 
    if $a \ge 2$ and $\pair{a}{1} \in \AS{\astr}$, then $\astr$ is periodic.%
  }
  \qed
\end{lemma}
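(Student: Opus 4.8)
The plan is to unfold the definition and then iterate. By hypothesis $\pair{a}{1}\in\AS{\astr}$ means $\asub{\astr}{a}{1}\similarx\astr$, so there is some $c\in\aalph$ with $\asub{\astr}{a}{1} = \astr + c$. Spelling out the arithmetic subsequence, this says
\begin{align*}
  \astr(n+a) = \astr(n) + c \qquad (n\in\nat).
\end{align*}
First I would iterate this identity: a trivial induction on $k$ gives $\astr(n + ka) = \astr(n) + kc$ for all $k\ge 0$ and $n\in\nat$.

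Next I would invoke finiteness of $\aalph$. Since $\triple{\aalph}{{+}}{0}$ is a finite cyclic group, the element $c$ has finite order; in fact $mc = 0$ where $m = \length{\aalph}$ (or one may take $m$ to be the precise order of $c$, but this is not needed). Plugging $k = m$ into the iterated identity yields $\astr(n + ma) = \astr(n) + mc = \astr(n)$ for all $n\in\nat$. Setting $t = ma$, and noting $t\ge 1$ because $a\ge 1$ and $m\ge 1$, we conclude $\astr(n+t) = \astr(n)$ for all $n$, i.e., $\astr$ is purely periodic with period $t$.

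There is essentially no obstacle here; the only points requiring a moment's care are that $a\ge 1$ (so the period $t = ma$ is genuinely positive, which is exactly why the hypothesis excludes $a = 0$) and that the ambient group is finite (so that $c$ has finite order — this is where we use that $\aalph$ is a finite cyclic group and not an arbitrary one). For the variant in the footnote, where $\astr\in\strp{\aalph}$ is indexed over the positive integers and one assumes $a\ge 2$ and $\pair{a}{1}\in\AS{\astr}$, the same argument applies after rewriting the shift relation as $\astr(n + (a-1)) = \astr(n) + c$ for all $n\in\posnat$: one iterates with step $a-1\ge 1$ and obtains period $\length{\aalph}\cdot(a-1)\ge 1$, whence $\astr$ is periodic.
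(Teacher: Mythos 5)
Your proof is correct and is exactly the argument the paper leaves implicit (the lemma is stated with its proof omitted as immediate): unfold $\asub{\astr}{a}{1}=\astr+c$ to the shift relation $\astr(n+a)=\astr(n)+c$, iterate, and kill $c$ using the finiteness of the cyclic group, obtaining the period $t=\length{\aalph}\cdot a\ge 1$. Your handling of the footnote variant over $\posnat$ (shift step $a-1\ge1$) is also right.
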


\section{Completely Additive Sequences}\label{sec:additive}
In this section we investigate the arithmetic self-similarity of completely additive sequences.

\begin{definition}\label{def:additive}
  A sequence $\astr\in\strp{\aalph}$ is 
  \emph{completely additive (with respect to~$\aalph$)} 
  if it is a homomorphism from 
  the multiplicative monoid $\triple{\posnat}{{\cdot}}{1}$ of positive integers
  to the additive group $\triple{\aalph}{{+}}{0}$, that is
  \begin{align*}
    \nth{\astr}{1} = 0
    &&
    \nth{\astr}{nm} = \nth{\astr}{n} + \nth{\astr}{m}
    \quad(n,m\in\posnat)
    \label{eq:additive}
  \end{align*}
  The constant zero sequence~$\mbs{z}\in\strp{\aalph}$,
  defined by $\mbs{z}(n) = 0_{\aalph}$ ($n\in\posnat$),
  is called \emph{trivially additive}, or just \emph{trivial}.
\end{definition}

\begin{lemma}\label{lem:additive:periodic}
  Every sequence that is both completely additive and ultimately periodic 
  is trivial.
\end{lemma}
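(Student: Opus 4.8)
The plan is to show that a completely additive sequence $\astr \in \strp{\aalph}$ that is ultimately periodic must satisfy $\astr(n) = 0$ for every $n \ge 1$. The key observation is that complete additivity lets us relate the value $\astr(n)$ at \emph{any} index to the value at a prime power, and in particular lets us exploit the multiplicativity along arithmetic-progression-like families of indices. First I would fix a period $t \in \posnat$ and preperiod $n_0 \in \nat$ witnessing ultimate periodicity, so that $\astr(n+t) = \astr(n)$ for all $n \ge n_0$.

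The core step is to prove that $\astr(p) = 0$ for every prime $p$. Since $\astr$ is completely additive, $\astr(p^k) = k \cdot \astr(p)$ for all $k \ge 0$. I want to find two exponents $k < \ell$ with $p^k \equiv p^\ell \pmod{t}$ and both $p^k, p^\ell \ge n_0$; this forces $\astr(p^k) = \astr(p^\ell)$ by periodicity, hence $(\ell - k)\cdot \astr(p) = 0$ in $\aalph$. To get such exponents when $p \nmid t$, note $p$ is a unit mod $t$, so the powers of $p$ mod $t$ are eventually periodic (in fact purely periodic), giving $p^k \equiv p^{k + d} \pmod t$ for suitable $k$ large and $d \ge 1$ equal to the multiplicative order of $p$ mod $t$; taking $k$ large enough that $p^k \ge n_0$ does the job. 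When $p \mid t$, write $t = p^a t'$ with $p \nmid t'$; then for $k \ge a$ the residue $p^k \bmod t$ cycles with some period $d$ as well (the map $k \mapsto p^k \bmod t$ on $k \ge a$ is eventually periodic since there are finitely many residues), so again $\astr(p^k) = \astr(p^{k+d})$ for $k$ large, yielding $d\cdot\astr(p) = 0$. In all cases I conclude that $\astr(p)$ has finite order dividing $d$ — but I need it to be \emph{exactly} $0$.

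To upgrade "finite order" to "zero" I would use the group structure more carefully. One clean route: for a fixed prime $p$, consider the indices $n_j = p^{k_0 + j d}$ for $j = 0, 1, 2, \ldots$, all congruent mod $t$ and all $\ge n_0$, so $\astr(n_j) = \astr(n_0')$ is a single constant value $v \in \aalph$; but also $\astr(n_j) = (k_0 + jd)\cdot \astr(p)$, and comparing $j=0$ with $j=1$ gives $d\cdot\astr(p) = 0$, while $\astr(n_0) = k_0 \cdot \astr(p) = v$. Now pick a different large exponent $k_1$ with $p^{k_1} \ge n_0$ and $p^{k_1} \equiv p^{k_0} \pmod t$; then $k_1 \cdot \astr(p) = k_0 \cdot \astr(p)$, so $(k_1 - k_0)\cdot\astr(p) = 0$ for a range of valid differences $k_1 - k_0$, and taking $\gcd$ of the admissible differences (which includes $d$ and, by varying the target residue among the finitely many in the cycle, other values whose gcd with $d$ can be driven down) forces $\astr(p) = 0$. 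Slightly more robustly: the set of $m \ge 1$ with $m\cdot\astr(p) = 0$ is a subgroup $e\Z$ of $\Z$ where $e$ is the order of $\astr(p)$; I claim $e = 1$. Indeed, using that consecutive powers $p, p^2, p^3, \ldots$ give $\astr(p^i) = i\cdot\astr(p)$ and that for all large $i$ these values are periodic in $i$ with period $d$, the sequence $i \mapsto i\cdot\astr(p)$ is eventually $d$-periodic, which already forces $d\cdot\astr(p)=0$; combining with the fact that the same argument applied to $p^2$ (another index) gives $\astr(p^2) = 2\astr(p)$ is also eventually periodic along its own power tower — this is automatic — the real leverage is to apply periodicity to the family $\{p \cdot q^j : j \ge 0\}$ for a second prime $q \nmid t$. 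Then $\astr(p q^j) = \astr(p) + j\cdot\astr(q)$, and for $j$ in an arithmetic progression these are all equal mod $t$-periodicity, so $\astr(q)$ has order dividing some $d'$; and now choosing $j$ with $q^j \equiv 1 \pmod t$ (possible since $q$ is a unit) but $q^j \ge n_0$, we get $\astr(p) + j\cdot\astr(q) = \astr(p\cdot q^j) = \astr(q^j) = j\cdot\astr(q)$ — wait, that needs $pq^j$ and $q^j$ to lie in the periodic tail and be congruent mod $t$, which they need not be. The genuinely clean finish, which I would adopt, is: since $\astr(p)$ has finite additive order for \emph{every} prime $p$, and $\astr$ takes values in the \emph{fixed} cyclic group $\aalph$ of order, say, $N$, we have $N\cdot\astr(p) = 0$ automatically — so the content is entirely in showing $\astr(p) = 0$, and for that I use: choose $k$ with $p^k \equiv 1 \pmod t$ and $p^k \ge n_0$ (exists when $p \nmid t$), so $\astr(p^k) = \astr(1 \cdot \text{(something)})$...

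Let me restate the clean finish plainly: \textbf{when $p \nmid t$}, pick $k \ge 1$ with $p^k \equiv 1 \pmod t$ and $p^k$ large; then the index $p^k + t$ also satisfies $\astr(p^k + t) = \astr(p^k) = k\cdot\astr(p)$. Meanwhile $p^k + t \equiv 1 + t \cdot(\text{stuff}) $ — this still doesn't factor nicely. The honest observation is that the right tool is: \emph{$\asub{\astr}{1}{1} = \astr$ trivially, and any shift in $\AS{\astr}$ forces periodicity (Lemma~\ref{lem:suffix:in:AS:periodic})} isn't quite it either. I will instead argue: a completely additive ultimately periodic $\astr$ is in fact \emph{purely} periodic of period $t$, because $\astr(p^k)$ for $k$ ranging over a residue class mod $d$ is constant and by scaling (multiply all indices by a fixed $M$ coprime to $t$ — using $\astr(Mn) = \astr(M) + \astr(n)$) we can translate the periodic tail back to the start; then $\astr(t+1) = \astr(1) = 0$, and more generally evaluating $\astr$ on $1 + t, 1 + 2t, \ldots$ and factoring each such number, combined with pure periodicity, pins down every $\astr(n)$.

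The step I expect to be the main obstacle is exactly this last one — cleanly bootstrapping from "$\astr(p)$ has finite order for every prime $p$" (or equivalently "$\astr$ is periodic") to "$\astr(p) = 0$ for every prime $p$", since the period $t$ need not be multiplicatively compatible with $p$. The cleanest resolution is the \emph{multiplicative shift} trick: fix a prime $p$ and choose $M$ with $M \equiv 0 \pmod{t}$ is wrong — rather choose, for each $n$, an exponent so that $p^k \cdot n$ lands in the periodic tail at a controlled residue; since $\astr(p^k n) = k\astr(p) + \astr(n)$ and the left side depends only on $p^k n \bmod t$, varying $k$ over the order-$d$ cycle while $p^k n$ stays $\ge n_0$ forces $d\cdot\astr(p) = 0$ as before, and then — the key — taking $n = p$ itself gives $\astr(p^{k+1}) = \astr(p^k) \Rightarrow \astr(p)=0$ provided $p^{k+1} \equiv p^k \pmod t$, i.e. provided $p^k(p-1) \equiv 0 \pmod t$; this holds once $k$ is large if and only if every prime dividing $t$ divides $p$ or divides $p - 1$, which is \emph{not} automatic. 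So the real argument must instead observe that $\astr(n)$ for $n$ coprime to $t$ is determined by $n \bmod t$ (by the $p^k$-shift argument applied with varying base), that $n \mapsto \astr(n)$ thereby descends to a \emph{group homomorphism} $\mgroup{t} \to \aalph$ (this uses complete additivity: $\astr(nm) = \astr(n)+\astr(m)$, and well-definedness mod $t$ from the above), and then a separate argument handles primes dividing $t$; finally a homomorphism from $\mgroup{t}$ that is ALSO additive in a way incompatible with the finite group structure... I would make this precise by noting the image is both a quotient of $\mgroup{t}$ and, via the valuations $\spadval{p}$, forced to vanish. I expect to spend most of the effort making the descent "$\astr(n)$ depends only on $n \bmod t$ for $\gcd(n,t)=1$" airtight and then showing the resulting homomorphism $\mgroup{t}\to\aalph$ must be trivial because $\astr$ also satisfies $\astr(p^{\mathrm{ord}_t(p)}) = \astr(1) = 0$, i.e. $\mathrm{ord}_t(p)\cdot\astr(p) = 0$ \emph{and} $\astr(p)$ lies in the image which, being generated by values forced to have coprime orders, collapses to $\{0\}$.
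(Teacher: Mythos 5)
Your proposal has a genuine gap at exactly the point you yourself flag as the main obstacle. What you actually establish is that for each prime $p$ the element $\astr(p)$ is killed by $d_p$, the (eventual) period of $k \mapsto p^k \bmod t$; the proposed finish --- that the induced map $\mgroup{t} \to \aalph$ must be a trivial homomorphism because its image is ``generated by values forced to have coprime orders'' --- is unsubstantiated and false as a statement about such homomorphisms. Nontrivial characters $\mgroup{t}\to\aalph$ exist in abundance (e.g.\ $\mgroup{5}\cong\Z/4\Z$ maps onto $\nalph{4}$, and all the orders involved divide $4$, so no coprimality is available), and the relation $\mathrm{ord}_t(p)\cdot\astr(p)=0$ is satisfied by any such character. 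Your index families ($p^k$, $p\,q^{\,j}$, indices coprime to $t$) systematically avoid the one place where the contradiction actually lives, namely indices divisible by the period.

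The paper's argument is a one-liner that uses precisely those indices: assuming first that $\astr$ is purely periodic with period $t$, every multiple $nt$ is congruent to $t$ modulo $t$, so periodicity gives $\astr(nt)=\astr(t)$ for all $n\in\posnat$, while complete additivity gives $\astr(nt)=\astr(n)+\astr(t)$; subtracting yields $\astr(n)=0$ for every $n$. The ultimately periodic case reduces to this by noting that $\asub{\astr}{n_0}{n_0}$ is periodic (Lemma~\ref{lem:periodic:subseq}) and equals $\astr+\astr(n_0)$ by additivity, so $\astr$ itself is purely periodic. If you want to rescue your approach you must incorporate this step; without evaluating $\astr$ on the multiples of $t$ (or of $n_0$), the character obstruction above shows the argument cannot close.
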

\begin{proof}
  Let $\astr\in\strp{\aalph}$ be a completely additive sequence. 
  First we assume $\astr$ to be purely periodic with period $t\in\posnat$.
  Then for all $n\in\posnat$ we have 
  $\nth{\astr}{nt} = \nth{\astr}{n} + \nth{\astr}{t}$ by complete additivity
  and $\nth{\astr}{nt} = \nth{\astr}{t}$ by $t$\nb-periodicity, whence $\nth{\astr}{n} = 0$.  
  
  Now let $\astr$ be ultimately periodic 
  with period~$t\in\posnat$ and preperiod~$n_0 \in \posnat$,
  i.e., $\astr(n+t) = \astr(n)$ for all integers $n \ge n_0$.
  It is clear that the arithmetic subsequence 
  $\div{\astr}{n_0} = \astr(n_0) \astr(2n_0)\ldots$  
  is periodic (see Lemma~\ref{lem:periodic:subseq}).
  By complete additivity we know $\astr(n n_0) = \astr(n) + \astr(n_0)$,
  and so $\astr$ is periodic as well. 
  Then, by the observation above, 
  we conclude that $\astr$ is the constant zero sequence.
\end{proof}

Completely additive sequences are uniquely determined by their values at prime number positions, 
i.e., by a map \mbox{$\ampy \funin \primes \to \aalph$}, as follows.
First we recall the definition of the $p$-adic valuation of an integer $n \ge 1$,
that is, the multiplicity 
of prime $p$ in the prime factorization of $n$:
\begin{definition}\label{def:p-adic:valuation}
  Let $p$ be a prime number. 
  The \emph{$p$-adic valuation} of $n \in \posnat$ 
  is defined by
  \begin{align*}
    \padval{p}{n} & = \maxx{\displset{a\in\nat}{\text{$p^a$ divides $n$}}}
  \end{align*}
\end{definition}
The infinite sequence $\pgs{p} = \padval{p}{1} \, \padval{p}{2} \,\ldots$ 
is completely additive (with respect to the cyclic group $\mbb{Z}$).

\begin{definition}\label{def:primgenseq}
  Let $\ampy \funin \primes \to \aalph$ and 
  define the sequence~$\pgs{\ampy}\in\strp{\aalph}$ for all $n\in\posnat$ by
  \begin{align*}
    \nth{\pgs{\ampy}}{n} 
    & = 
    \summ{p\in\primes}{}{\gtimes{\padval{p}{n}}{\funap{\ampy}{p}}}
  \end{align*}
  (Here summation is the generalized form of addition of the group~$\aalph$.)
  If the set $\{ p \in \primes \where \funap{\ampy}{p} \neq 0 \}$ is finite,
  then $\pgs{\ampy}$ is called \emph{finitely prime generated}. 
  Otherwise $\pgs{\ampy}$ is called an \emph{infinitely} prime generated sequence.
\end{definition}
Every completely additive sequence $\astr$ has a generator $\ampy \funin \primes \to \aalph$,
viz.\ $\ampy = \domrestr{\astr}{\primes}$, 
the domain restriction of the function $\astr \funin \posnat \to \aalph$ 
to the set of primes.
Hence, for every finite cyclic group~$\aalph$ with $\card{\aalph}\geq 2$,
the set of completely additive sequences over $\aalph$ has the cardinality of the continuum.
\begin{example}
  Define $\ampy \funin \primes\to\nalph{4}$ by
  $\funap{\ampy}{p} = 0,1,3,2,1$ if $p \equiv 1,2,3,4,0 \pmod{5}$, respectively.
  Then $\pgs{\ampy} = 01321 \, 01322 \, 01320 \, 01323 \, 01322 \ldots$ 
  is generated by the infinite set 
  $\{ p \where \funap{\ampy}{p} \neq 0 \} = \primes \cap \residues{2,3,4}{5} \cup \{5\}$.
\end{example}
If $\aalph = \nalph{2} = \{0,1\}$ 
then $\ampy \funin \primes \to \aalph_2$ 
is the characteristic function of a set $X\subseteq\primes$,
and we simply write $\funap{\pgs{X}}{n}$
($\mathrel{=} \summ{p\in X}{}{\padval{p}{n}} \mod 2$).
We sometimes write $\pgs{X}$ where $X\subseteq \nat$ 
to denote $\pgs{\primes\cap X}$.
For instance $\pgs{\residues{a}{b}}$
denotes the completely additive sequence generated by the set of prime numbers congruent to $a$ modulo~$b$,
which is infinite if $\gcd{a}{b} = 1$ by~\cite{diri:1837}.

\begin{example}
  The sequence $\terdragon = \funnap{\zeta}{\omega}{0}$ 
  with $\zeta\funin\setop{\wrd{\bit}}$ the morphism given by
  $\funap{\zeta}{0} = 010$
  and $\funap{\zeta}{1} = 011$,
  is the sequence of turns of the \textit{Terdragon curve}~\cite{davi:knut:1970} (\oeis{A080846}).
  From Theorem~\ref{thm:superthm} (and Proposition~\ref{prop:one:gap:morphism})
  it follows that $\terdragon$ is a completely additive sequence,
  generated by the infinite set of primes 
  congruent to~$2$ modulo~$3$, i.e., 
  \( \terdragon = \pgs{\residues{2}{3}} \). 
\end{example}

\begin{lemma}\label{lem:pgs:add}
  Let $\iampy{1},\iampy{2}\funin\primes\to\aalph_{2}$. 
  Then $\pgs{\iampy{1}+\iampy{2}} = \pgs{\iampy{1}} + \pgs{\iampy{2}}$.
  \qed
\end{lemma}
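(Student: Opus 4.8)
The plan is to prove the identity pointwise: I show that $\nth{\pgs{\iampy{1}+\iampy{2}}}{n} = \nth{(\pgs{\iampy{1}}+\pgs{\iampy{2}})}{n}$ for every $n \in \posnat$, simply by unfolding Definition~\ref{def:primgenseq} and rearranging the resulting sum. Recall that for $\mu\funin\primes\to\nalph{2}$ the sum $\mu_1+\mu_2$ is taken pointwise, so $\funap{(\iampy{1}+\iampy{2})}{p} = \funap{\iampy{1}}{p} + \funap{\iampy{2}}{p}$ for all primes $p$.

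Fix $n \in \posnat$ and abbreviate $k_p = \padval{p}{n}$. Note first that $k_p \geq 1$ holds only for the finitely many primes $p$ dividing $n$, while $\gtimes{0}{c} = 0$ for every $c \in \nalph{2}$; hence each of the sums over $\primes$ occurring below has finite support and may be freely reassociated. Since $\nalph{2}$ is abelian, the $k$-fold iterated addition is additive in its argument, i.e.\ $\gtimes{k}{(c+d)} = \gtimes{k}{c} + \gtimes{k}{d}$ for all $k\in\nat$ and $c,d \in \nalph{2}$ (this is the equality $k(c+d) = kc + kd$ modulo $2$). Taking $c = \funap{\iampy{1}}{p}$ and $d = \funap{\iampy{2}}{p}$, we obtain for each prime $p$
\[
  \gtimes{k_p}{\funap{(\iampy{1}+\iampy{2})}{p}}
  = \gtimes{k_p}{\funap{\iampy{1}}{p}} + \gtimes{k_p}{\funap{\iampy{2}}{p}} .
\]
Summing this over all $p \in \primes$ and splitting the (finitely supported) sum into two, using commutativity and associativity of $+$ in $\nalph{2}$, yields precisely $\nth{\pgs{\iampy{1}+\iampy{2}}}{n} = \nth{\pgs{\iampy{1}}}{n} + \nth{\pgs{\iampy{2}}}{n}$, which is the claim at index $n$; since $n$ was arbitrary, the two sequences coincide.

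There is no genuine obstacle: the statement is nothing more than a pointwise instance of the distributivity of integer scalar multiplication over addition in the cyclic group $\nalph{2}$. The only two points deserving a word of care are the well-definedness (finite support) of the sums defining $\pgs{\cdot}$ at a fixed index, and the additivity of $\gtimes{k}{\cdot}$ in its second argument — both immediate.
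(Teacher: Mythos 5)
Your proof is correct: the paper states this lemma without proof (marking it as immediate), and your pointwise unfolding of Definition~\ref{def:primgenseq} together with finite support of the sums and distributivity of $\gtimes{k}{\cdot}$ over addition in $\nalph{2}$ is exactly the intended justification.
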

For sets $A,B\subseteq\primes$ Lemma~\ref{lem:pgs:add} says
$\pgs{A \setxorx B} = \pgs{A} + \pgs{B}$,
where $\ssetxor$ is symmetric difference:
$A \setxorx B = (A \cup B)\setminus(A \cap B) = (A \setminus B) \cup (B \setminus A)$,
and so we have
$\pgs{A \cup B} = \pgs{A} + \pgs{B}$ if $A \cap B = \emptyset$.
See for example Table~\ref{ref:table:pgs:2,3},
where the $2$ and $3$-adic valuation sequences in base $2$ are added modulo $2$.
\begin{table}[t]
\begin{footnotesize}
  \(
  \begin{array}{|c|c|c|c|c|c|c|c|c|c|c|c|c|c|c|c|c|c|c|c}
    \hline
          n   & 1 & 2 & 3 & 4 & 5 & 6 & 7 & 8 & 9 & 10 & 11 & 12 & 13 & 14 & 15 & 16 & 17 & 18 & \ldots \\
    \hline
    \pgs{2}   & 0 & 1 & 0 & 0 & 0 & 1 & 0 & 1 & 0 &  1 &  0 &  0 &  0 &  1 &  0 &  0 &  0 &  1 & \ldots \\
    \hline
    \pgs{3}   & 0 & 0 & 1 & 0 & 0 & 1 & 0 & 0 & 0 &  0 &  0 &  1 &  0 &  0 &  1 &  0 &  0 &  0 & \ldots \\
    \hline
    \pgs{2,3} & 0 & 1 & 1 & 0 & 0 & 0 & 0 & 1 & 0 &  1 &  0 &  1 &  0 &  1 &  1 &  0 &  0 &  1 & \ldots \\
    \hline
  \end{array}
  \)
  \end{footnotesize}%
  \caption{An instance of Lemma~\ref{lem:pgs:add}: $\pgs{2,3} = \pgs{2} + \pgs{3}$.}
  \label{ref:table:pgs:2,3}
\end{table}

{%
\renewcommand{\astr}{w}%

If $\astr\in\strp{\aalph}$ is additive,
it is clear that $\pair{b}{b} \in \AS{\astr}$ for all $b \in \posnat$. 
The question arises whether the arithmetic self-similarity of $\astr$
possibly contains other progressions.
This is not the case,
a result due to Kevin~Hare and Michael~Coons~\cite{hare:coon:2011}.

\begin{theorem}\label{thm:AS}
  Let $\astr\in\strp{\aalph}$ be a non-trivial completely additive sequence.
  Then: \[ \AS{\astr} = \displset{\pair{b}{b}}{b\in\posnat} \]
\end{theorem}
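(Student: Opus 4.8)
The inclusion ``$\supseteq$'' is immediate: complete additivity gives $\nth{\asub{\astr}{b}{b}}{n}=\nth{\astr}{bn}=\nth{\astr}{b}+\nth{\astr}{n}$, so $\asub{\astr}{b}{b}=\astr+\nth{\astr}{b}\similarx\astr$. For ``$\subseteq$'', suppose $\pair{a}{b}\in\AS{\astr}$; reading off the vertical shift of $\asub{\astr}{a}{b}\similarx\astr$ at the first index shows it is $\nth{\astr}{a}$, i.e.
\[
  \nth{\astr}{a+bm}=\nth{\astr}{m+1}+\nth{\astr}{a}\qquad(m\ge0),
\]
which I will call $(\ast)$. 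The plan is to show $a=b$ via three reductions followed by a contradiction argument. First, with $d=\gcd{a}{b}$, $a=da_0$, $b=db_0$, applying complete additivity to $a+bm=d(a_0+b_0m)$ turns $(\ast)$ into the same relation for $\pair{a_0}{b_0}$; so one may assume $\gcd{a}{b}=1$ (it then suffices to force $a=b=1$). Second, if $b=1$ and $a\ge2$ then $\pair{a}{1}\in\AS{\astr}$ makes $\astr$ periodic by Lemma~\ref{lem:suffix:in:AS:periodic} and hence trivial by Lemma~\ref{lem:additive:periodic}, contrary to hypothesis; so $b=1$ forces $a=1$, leaving the case $b\ge2$, $\gcd{a}{b}=1$ (hence $a\neq b$) to be ruled out. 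Third, iterating $(\ast)$ via Lemma~\ref{lem:AS:closed:under:compostion}, the $k$-fold self-composition of $\pair{a}{b}$ is a pair $\pair{a_k}{b^k}\in\AS{\astr}$ of shift $k\cdot\nth{\astr}{a}$ with $a_k\equiv a\pmod b$, so $\gcd{a_k}{b^k}=1$; taking $k$ to be the additive order of $\nth{\astr}{a}$ and passing to $\pair{a_k}{b^k}$, one may in addition assume $\nth{\astr}{a}=0$, so that $(\ast)$ becomes
\[
  \nth{\astr}{a+bm}=\nth{\astr}{m+1}\qquad(m\ge0),
\]
call it $(\ast\ast)$; equivalently $\nth{\astr}{n}=\nth{\astr}{(n-a)/b+1}$ whenever $n\equiv a\pmod b$ and $n\ge a$, the right-hand argument being strictly smaller than $n$ since $b\ge2$.

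With $\astr$ assumed non-trivial, I would now use $(\ast\ast)$ together with complete multiplicativity to pin $\astr$ down. Some prime $p\nmid b$ has $\nth{\astr}{p}\neq0$, for otherwise $\astr$ vanishes on every integer coprime to $b$, and since $\gcd{a+bm}{b}=\gcd{a}{b}=1$, $(\ast\ast)$ would give $\nth{\astr}{m+1}=0$ for all $m$. Moreover, for any prime $q\nmid b$ choose $c\in\{1,\dots,b\}$ with $cq\equiv a\pmod b$; applying $(\ast\ast)$ to $n=cq$ and using $\nth{\astr}{cq}=\nth{\astr}{c}+\nth{\astr}{q}$ yields
\[
  \nth{\astr}{q}=\nth{\astr}{(cq-a)/b+1}-\nth{\astr}{c},
\]
where for all but finitely many $q$ both arguments on the right are $<q$ (the choice $c=b$ could make them equal only if $a=b$). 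Downward induction then expresses $\nth{\astr}{q}$ for every large prime $q$ through values of $\astr$ at smaller primes, so $\astr$ is determined by its values at the finitely many primes below some bound $B$. (If helpful one can iterate $(\ast\ast)$ along the finer classes $a_k\bmod b^k$, which still meet infinitely many primes by~\cite{diri:1837}.)

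It then remains to show $\nth{\astr}{r}=0$ for every prime $r\le B$, including the primes dividing $b$, which the induction above did not reach. For this I would evaluate $(\ast\ast)$ at composite $n\equiv a\pmod b$ of prescribed shape --- perfect powers $s^e$, and products $\prod_i s_i^{e_i}$ of primes $s_i$ whose residues generate $\mgroup{b}$ (which exist by~\cite{diri:1837}) --- and compare the value of $\nth{\astr}{n}$ given by $(\ast\ast)$ with the one given by complete additivity; already in small explicit cases this produces an $n$ forcing $\nth{\astr}{r}=0$, after which the recursion collapses all prime values to $0$. The resulting homogeneous linear system on $\{\nth{\astr}{r}:r\le B\text{ prime}\}$ should, precisely because $a\neq b$, have only the zero solution, making $\astr$ trivial --- the desired contradiction, which completes the proof.

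The three reductions are routine. The real obstacle is the last step: $(\ast\ast)$ constrains $\astr$ along a single residue class only, so one must route the constraint through complete multiplicativity (and Dirichlet's theorem) to all primes and then verify \emph{uniformly} in the pair $\pair{a}{b}$ --- not merely case by case --- that the cross-relations leave no nonzero solution. Carrying out this uniform ``collapse'' is where I expect the bulk of the work to lie, and where the argument of Hare and Coons~\cite{hare:coon:2011} does its real work.
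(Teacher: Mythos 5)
Your ``$\supseteq$'' direction and your three preliminary reductions (divide out $\gcd{a}{b}$, dispose of $b=1$ via Lemmas~\ref{lem:suffix:in:AS:periodic} and~\ref{lem:additive:periodic}, and kill the shift constant by composing $\pair{a}{b}$ with itself an appropriate number of times) are all sound. But the proof has a genuine gap exactly where you say you expect ``the bulk of the work to lie'': the final collapse. Your relation $(\ast\ast)$ constrains $\astr$ only along the single residue class $\residues{a}{b}$, and your plan — route it through multiplicativity and Dirichlet to express $\nth{\astr}{q}$ for large primes $q$ in terms of finitely many values, then argue that ``the resulting homogeneous linear system should, precisely because $a\neq b$, have only the zero solution'' — is an unproved assertion, not an argument. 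Showing that $\astr$ is \emph{determined by} its values at primes below a bound $B$ does not by itself produce any contradiction with non-triviality; one still has to exhibit enough independent relations to force all those values to $0$, uniformly in $\pair{a}{b}$, and nothing in your sketch does this. As written, the proof is incomplete.

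For comparison, the paper's proof (due to Hare and Coons) closes this gap with a short algebraic bootstrap rather than a linear-algebra-over-primes argument. First, substituting $n=\abs{a-b}\cdot N$ into $\nth{\astr}{bn+a-b}=\nth{\astr}{n}+c$ and factoring $b\abs{a-b}N+(a-b)=\abs{a-b}(bN\pm 1)$ reduces everything to the case $\nth{\astr}{bn\pm 1}=\nth{\astr}{n}+c$. Then the identity $(bn+j)(bn+1)=b\bigl(bn^2+n(j+1)\bigr)+j$, combined with complete additivity, lets one prove by induction on $j$ that $\nth{\astr}{bn+j}=\nth{\astr}{n}+c$ for \emph{every} $j\ge 1$ — i.e.\ the constraint propagates from one residue class to all of them at once. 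Taking $n=1$ shows $\astr$ is eventually constant, hence ultimately periodic, hence trivial by Lemma~\ref{lem:additive:periodic}. (The case $bn-1$ reduces to the case $bn+1$ via $(bn-1)(bn+1)=b(bn^2)-1$.) If you want to salvage your outline, this propagation step — or something equally explicit — is precisely what has to replace your appeal to the unspecified homogeneous system.
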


\begin{proof}[Proof~{(K.\:Hare, M.\:Coons~\cite{hare:coon:2011})}]
  \mbox{}
  \begin{itemize}

    \item[($\supseteq$)]
      Let $\astr$ be a completely additive sequence over~$\aalph$,
      and let $b\in\posnat$.
      For all $n \in \posnat$ we have
      \( \asub{\astr}{b}{b}(n) = 
         \astr(bn) = \astr(n) + \astr(b) \).
         Hence $\asub{\astr}{b}{b} \similarx \astr$ 
         and so $\pair{b}{b}\in\AS{\astr}$.

    \item[($\subseteq$)]
      Let $\astr$ be a completely additive sequence over~$\aalph$
      and assume $\pair{a}{b} \in \AS{\astr}$ with $a \ne b$.
      We will show that $\astr$ is trivially additive,
      i.e., $\nth{\astr}{n} = 0$ for all $n \in \posnat$.
      
      \begin{claim}\label{claim}
        Without loss of generality we may assume
        $\abs{a-b} = \pm 1$.
      \end{claim}
      \begin{proof}[Proof of Claim~\ref{claim}]
        By the assumption $\pair{a}{b} \in \AS{\astr}$ we have 
        \begin{align*}
          \nth{\asub{\astr}{a}{b}}{n} 
          = \nth{\astr}{a + b(n-1)} 
          = \nth{\astr}{bn + a - b} 
          = \nth{\astr}{n} + c
          && (n \in \posnat)
        \end{align*}
        for some $c \in \aalph$.
        So in particular it holds for $n = \abs{a-b} \cdot N$, 
        where $N$ is some arbitrary positive integer.
        This implies that
        \begin{align*}
          \nth{\astr}{b \cdot \abs{a-b} \cdot N + a - b} 
          = \nth{\astr}{\abs{a-b}} + \nth{\astr}{N} + c
        \end{align*}
        but
        \begin{align*}
          \nth{\astr}{b \cdot \abs{a-b} \cdot N + a - b} 
          = \nth{\astr}{\abs{a-b}} + \nth{\astr}{bN \pm 1)}
        \end{align*}
        Subtracting $\nth{\astr}{\abs{a-b}}$ from both right-hand sides, 
        we have
        \begin{align*}
          \nth{\astr}{bN + \sign{a-b}} = \nth{\astr}{N} + c
          && (a > b)
        \end{align*}
        which concludes the proof of Claim~\ref{claim}.
      \end{proof}
      We employ the techniques used in the proof of Theorem~4 
      in~\cite{cass:fere:maud:riva:sark:2000} for showing that 
      in both cases $\astr$ is trivially additive.
      \begin{case}\label{case:1}
        If there is some $c\in\aalph$ such that 
        $\nth{\astr}{bn + 1} = \nth{\astr}{n} + c$ for all $n \in \posnat$, 
        then $\astr$ is trivial.
      \end{case}
      \begin{proof}[Proof of Case~\ref{case:1}]
        Let $c\in\aalph$ be such that $\nth{\astr}{bn + 1} = \nth{\astr}{n} + c$
        for all $n$.
        We will show by induction that 
        \begin{align}
          \nth{\astr}{bn + i} = \nth{\astr}{n} + c
          && (n \in \posnat)
          \tag{$\ddagger$}
          \label{eq:case:1}
        \end{align}
        for all $i = 1,2,\ldots$.
        Hence, taking $n = 1$, 
        the sequence $\astr$ is constant from position $b + 1$ onward.
        With Lemma~\ref{lem:additive:periodic} it then follows that $\astr$ is trivially additive.
        
        For $i = 1$ equation \eqref{eq:case:1} holds by assumption.
        Now assume \eqref{eq:case:1} holds for $i = 1,2,\ldots,j$.
        Then we find:
        \begin{align*}
          \nth{\astr}{bn + j} + \nth{\astr}{bn + 1}
          & = 2\cdot\nth{\astr}{n} + 2c
        \end{align*}
        and 
        \begin{align*}
          \nth{\astr}{bn + j} + \nth{\astr}{bn + 1}
          & = \nth{\astr}{(bn + j)(bn + 1)} \\
          & = \nth{\astr}{b^2 n^2 + bn(j+1) + j} \\
          & = \nth{\astr}{b(bn^2 + n(j+1)) + j} \\
          & = \nth{\astr}{bn^2 + n(j+1)} + c \\
          & = \nth{\astr}{(bn + j + 1)n} + c \\
          & = \nth{\astr}{bn + j + 1} + \nth{\astr}{n} + c
        \end{align*}
        Hence $\nth{\astr}{bn + j + 1} = \nth{\astr}{n} + c$
        and the result follows by induction. 
        This concludes the proof of Case~\ref{case:1}.
      \end{proof}
      
      \begin{case}\label{case:2}
        If there is some $c\in\aalph$ such that 
        $\nth{\astr}{bn + 1} = \nth{\astr}{n} + c$ for all $n \in \posnat$, 
        then $\astr$ is trivial.
      \end{case}
      \begin{proof}
        Assume, for some $c\in\aalph$ that 
        $\nth{\astr}{bn - 1)} = \nth{\astr}{n} + c$ for all integers $n \in \posnat$.
        Then
        \begin{align*}
          \nth{\astr}{b(bn^2) - 1)} 
          & = \nth{\astr}{(bn - 1)(bn + 1)}  \\
          & = \nth{\astr}{bn - 1} + \nth{\astr}{bn + 1} \\
          & = \nth{\astr}{n} + c + \nth{\astr}{bn + 1} 
        \end{align*}
        and 
        \begin{align*}
          \nth{\astr}{b(bn^2) - 1)} 
          &= \nth{\astr}{bn^2} + c \\
          & = \nth{\astr}{b} + 2\cdot\nth{\astr}{n} + c
        \end{align*}
        Hence $\nth{\astr}{bn + 1} = \nth{\astr}{n} + \nth{\astr}{b}$, for all $n \in \posnat$.
        By Case~\ref{case:1} it then follows that $\astr$ is trivially additive.
      \end{proof}

  \end{itemize}
  To establish the direction~$\subseteq$,
  we have shown that 
  if $\astr$ is completely additive
  and $\pair{a}{b} \in \AS{\astr}$ 
  for some $a \ne b$, then $\astr$ is trivially additive.
\end{proof}%
}%

\begin{remark}\label{rem:hare}
  We were pleasantly surprised to receive an e\tsp-mail from Kevin Hare
  with the beautiful proof of Theorem~\ref{thm:AS}~\cite{hare:coon:2011}.
  At the time we had some partial results:
  we had a proof of the statement for finitely prime generated sequences~$\astr$,
  and also for some specific infinitely prime generated sequences, 
  namely those produced by one-gap Toeplitz patterns 
  (see Sections~\ref{sec:toeplitz} and~\ref{sec:toeplitz:additive}).
  For these partial results we refer to the first version of the current arXiv report, 
  available at the following url: 
  \url{http://arxiv.org/pdf/1201.3786v1}.
  Finally, we had also established the statement 
  for completely additive sequences over the infinite cyclic group~$\mbb{Z}$.
\end{remark}

\section{Toeplitz Words}\label{sec:toeplitz}
Toeplitz words were introduced in~\cite{jaco:kean:1969}, see also 
\cite{allo:bach:1992,cass:karh:1997}.
A Toeplitz word over an alphabet~$\aalph$ is an infinite sequence 
iteratively constructed as follows: 
Given is a starting sequence $\iastr{0}$ over the alphabet~$\aalph\cup\{\gap\}$,
where we may think of the symbol~$\gap\not\in\aalph$ as `undefined', 
and of $\iastr{0}$ as a `partially defined' sequence.
The occurrences of~$\gap$ in a sequence~$\astr$ are called the `gaps' of~$\astr$.
For $i > 0$ 
the sequence~$\iastr{i}$ is obtained from $\iastr{i-1}$
by `filling its sequence of gaps' 
(i.e., consecutively replacing the occurrences of~$\gap$ in $\iastr{i-1}$) 
by the sequence $\iastr{i-1}$ itself,
as made precise in Definition~\ref{def:toeplitz:inf} below.
In the limit we then obtain a totally defined sequence 
(i.e., without gaps) if and only if the first symbol of 
the start sequence~$\iastr{0}$ is defined (i.e., is in~$\aalph$).


As in~\cite{allo:bach:1992} we allow the application 
of any bijective map~$\agap\funin\aalph\to\aalph$ to the symbols that replace the gaps. 
Thus we let the above $\iastr{i}$ be sequences over $\aalph \cup \symgroup{\aalph}$ 
where $\symgroup{\aalph}$ denotes the symmetric group of bijections (or permutations) on $\aalph$,
and let the elements from $\symgroup{\aalph}$ play the role of gaps.
Filling a gap~$\agap$ with a letter~$a\in\aalph$ 
then results in $\funap{f}{a}$, and filling a gap~$\agap$ by a gap~$\bgap$ 
results in the function composition $\fgap{f \circ g}$.
Viewed in this way the symbol $\gap$ stands for $\mrm{id}_\aalph$, 
the identity element of~$\symgroup{\aalph}$, and we will use it in that way.
From the finiteness of the alphabet~$\aalph$ and the $f$'s being one-to-one 
it directly follows that extending the set of Toeplitz patterns 
does \emph{not} increase the expressive power of the system~\cite{allo:bach:1992}: 
every Toeplitz word generated by a pattern with gaps from $\symgroup{\aalph}$ 
can already be defined using a (longer) pattern with gaps~$\gap$ only.
In other words, it is a conservative extension.
\begin{definition}\label{def:toeplitz:inf}
  For infinite words $\astr ,\bstr\in\strp{(\gaalph)}$
  we define $\Tsubst{\astr}{\bstr}$ recursively by
  \begin{align*}
    \Tsubst{(a \, x)}{y} & = a \, \Tsubst{x}{y} &
    \Tsubst{(\agap \, x)}{b \, y} & = \funap{\agap}{b} \, \Tsubst{x}{y} &
    \Tsubst{(\agap \, x)}{\bgap \, y} & = (\fgap{\funcomp{\agap}{\bgap}}) \, \Tsubst{x}{y}
  \end{align*}
  where $a,b\in\aalph$, $\agap,\bgap\in\symgroup{\aalph}$,
  and $x,y\in\strp{(\gaalph)}$.
  Further let $\apat \in \aalph\wrd{(\gaalph)}$ (first symbol not a gap)
  and, for $k=0,1,2,\ldots$, define $\toeplitzi{k}{\apat}\in\strp{(\gaalph)}$ by
  \begin{align*}
    \toeplitzi{0}{\apat}   & = \gap^\omega = \gap\gap\gap\ldots &
    \toeplitzi{k+1}{\apat} & = \Tsubst{\apat^\omega}{\toeplitzi{k}{\apat}}
  \end{align*}
  Then $\toeplitz{\apat} \in \strp{\aalph}$, 
  the \emph{Toeplitz word generated by $\apat$}, is defined as the limit of these words,
  as follows:
  \[
    \toeplitz{\apat} = \lim_{k\to\infinity}{\toeplitzi{k}{\apat}}
  \]
  We let $\gapnr{\apat} = \length{\{h\where\nth{\apat}{h}\in\symgroup{\aalph}\}}$        
  denote \emph{the number of gaps in $\apat$},
  and, following~\cite{cass:karh:1997}, 
  we call $\toeplitz{\apat}$ a Toeplitz word \emph{of type $\pair{r}{q}$}
  when $r = \length{\apat}$ and $q = \gapnr{\apat}$. 
\end{definition}%

\begin{example}
  Let $\apat = 0\gap 1\gap \in \wrd{(\nalph{2}\cup\symgroup{\nalph{2}})}$.
  The sequence of `$\apat$-generations' $\toeplitzi{0}{P},\toeplitzi{1}{P},\ldots$ 
  starts as follows:
  \begin{align*}
    \gap^\omega ,\,
    (0\gap 1\gap)^\omega ,\,
    (001\gap 011\gap)^\omega ,\,
    (0010 011\gap 0011 011\gap)^\omega ,\,
    \ldots
  \end{align*}
  The limit of this sequence of sequences 
  is the well-known regular paperfolding sequence: 
  $\toeplitz{\apat} = \PF$, 
  see~\cite{allo:bach:1992}; 
  $\PF$ is additive and 
  generated by the (infinite) set of prime numbers congruent to $3$ modulo~$4$.
\end{example}

\begin{proposition}
  The set $\strp{(\aalph \cup \symgroup{\aalph})}$ 
  with the operation $\pair{\astr}{\bstr} \mapsto \Tsubst{\astr}{\bstr}$ 
  and identity element~$\gap^\omega$ forms a monoid.
\end{proposition}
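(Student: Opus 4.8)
The plan is to verify the three monoid axioms for the structure $\big(\strp{(\gaalph)},\, \Tsubst{\cdot}{\cdot},\, \gap^\omega\big)$: that $\Tsubst{\cdot}{\cdot}$ is a well-defined binary operation on $\strp{(\gaalph)}$, that $\gap^\omega$ is a two-sided identity, and that $\Tsubst{\cdot}{\cdot}$ is associative. Well-definedness is essentially immediate from the recursion in Definition~\ref{def:toeplitz:inf}: reading $\astr = a_0 a_1 a_2 \ldots$ letter by letter, the $n$-th letter of $\Tsubst{\astr}{\bstr}$ is $a_n$ if $a_n \in \aalph$, and otherwise it is $a_n \circ b_j$ where $b_j$ is the $j$-th letter of $\bstr$ and $j$ is the number of gap-symbols among $a_0,\ldots,a_{n-1}$; in either case the result lies in $\gaalph$ (using that $\funap{f}{b}\in\aalph$ and $\funcomp{f}{g}\in\symgroup{\aalph}$), so $\Tsubst{\astr}{\bstr}\in\strp{(\gaalph)}$. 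It is worth extracting this closed form as a lemma first, since it makes all three verifications mechanical.

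For the identity laws, I would show $\Tsubst{\astr}{\gap^\omega} = \astr$ and $\Tsubst{\gap^\omega}{\astr} = \astr$. The first follows because filling a gap $f$ with the gap $\gap = \mrm{id}_\aalph$ yields $\funcomp{f}{\mrm{id}_\aalph} = f$, and letters in $\aalph$ are copied unchanged; the second because every letter of $\gap^\omega$ is a gap, so the $n$-th letter of $\Tsubst{\gap^\omega}{\astr}$ is $\funcomp{\mrm{id}_\aalph}{a_n} = a_n$ (here the gap-count before position $n$ is exactly $n$). Both are routine inductions on the positional description, or equivalently coinductive arguments matching the recursion clauses.

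The main work is associativity: $\Tsubst{(\Tsubst{\astr}{\bstr})}{\cstr} = \Tsubst{\astr}{(\Tsubst{\bstr}{\cstr})}$ for all $\astr,\bstr,\cstr\in\strp{(\gaalph)}$. Using the closed form, this reduces to a bookkeeping identity about gap-positions. Let $g_\astr(n)$ denote the number of gap-symbols among the first $n$ letters of $\astr$; the key observation is that a position $n$ contributes a gap to $\Tsubst{\astr}{\bstr}$ exactly when position $n$ of $\astr$ is a gap \emph{and} the $g_\astr(n)$-th letter of $\bstr$ is a gap, so the gap-count function of $\Tsubst{\astr}{\bstr}$ at position $n$ equals $g_\bstr(g_\astr(n))$. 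Both sides of the associativity equation then compute, at position $n$, to a threefold composite $f \circ g \circ h$ (where $f = \nth{\astr}{n}$ if it is a gap, $g$ is the appropriately-indexed letter of $\bstr$, $h$ the appropriately-indexed letter of $\cstr$), with the indices into $\cstr$ on both sides agreeing precisely because of the gap-count identity above; associativity of function composition in $\symgroup{\aalph}$ then finishes it. I expect the indexing bookkeeping — keeping straight which letter of $\bstr$ and which letter of $\cstr$ gets used, on each side of the equation — to be the only real obstacle, and the cleanest route is to prove everything through the explicit positional formula rather than unwinding the mutual recursion directly.
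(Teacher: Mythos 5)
Your proposal is correct. The paper states this proposition without any proof, so there is nothing to compare against; your argument is a legitimate way to fill that gap. The decisive points are exactly the ones you isolate: (i) the closed positional form of $\Tsubst{\astr}{\bstr}$ (position $n$ is $\nth{\astr}{n}$ if that is a letter, and otherwise is obtained from the $j$\nb-th entry of $\bstr$ where $j$ counts the gaps of $\astr$ strictly before position $n$, yielding either an application $\funap{f}{b}\in\aalph$ or a composite $\funcomp{f}{g}\in\symgroup{\aalph}$); (ii) the identity that the gap-counting function of $\Tsubst{\astr}{\bstr}$ is the composite of the gap-counting functions of $\bstr$ and $\astr$, which is what makes the indices into $\cstr$ agree on both sides of the associativity equation; and (iii) associativity of composition in $\symgroup{\aalph}$ together with $\funap{(\funcomp{f}{g})}{c}=\funap{f}{\funap{g}{c}}$ for the four letter/gap case splits. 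The identity laws are immediate since $\gap=\mrm{id}_{\aalph}$. One cosmetic remark: writing the gap case of the closed form uniformly as ``$a_n\circ b_j$'' conflates application to a letter with composition with a permutation; your parenthetical shows you are aware of both cases, but in a written-out version you should keep them separate, since one produces an element of $\aalph$ and the other an element of $\symgroup{\aalph}$, and the case analysis in the associativity check depends on which one occurs.
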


The Toeplitz word 
$\toeplitz{\apat}$ is the unique solution of $x$ in the equation
\( x = \Tsubst{\apat^\omega}{x} \).
The construction can thus be viewed as `self-reading' in the sense that 
the sequence under construction itself is read to fill the gaps.
For example, the period doubling sequence~$\PD$ is the Toeplitz word over $\nalph{2}$
generated by the pattern $010\gap$, as follows:
\newcommand{\gp}{\mbs{\gap}\hspace{0.035em}}
\begin{align*}
  (010\gap)^\omega
  & =
  010\gp 010\gp 010\gp 010\gp 
  010\gp 010\gp 010\gp 010\gp 
  010\gp 010\gp 010\gp 010\gp 
  \ldots \\
  \PD = \toeplitz{010\gap}
& =
  010\mbf{0} 010\mbf{1} 010\mbf{0} 010\mbf{0} 
  010\mbf{0} 010\mbf{1} 010\mbf{0} 010\mbf{1}
  010\mbf{0} 010\mbf{1} 010\mbf{0} 010\mbf{0} 
  \ldots
\end{align*}%
For $d\in\aalph$ let us denote by $\rotgap{d}$ the rotation $a \mapsto a+_\aalph d$ (so $\gap = \rotgap{0}$).
Then the pattern $010\gap$ can be simplified to $0\rotgap{1}$, 
because 
$0\rotgap{1} \toeplitzeq 010\gap$, 
where the relation~$\stoeplitzeq$ is defined by:
\[ \apat \toeplitzeq \bpat \quad \text{if and only if} \quad \toeplitz{\apat} = \toeplitz{\bpat} \]
We call the elements of the set $\newrd{(\aalph \cup \symgroup{\aalph})}$
\emph{\hyph{$\aalph$}{patterns}}. 
We continue with a definition of a 
composition operation~$\sTcmp$ 
on $\aalph$-patterns~$\apat,\bpat$ such that 
$(\Tcomp{\apat}{\bpat})^\omega = \Tsubst{\apat^\omega}{\bpat^\omega}$.
This then explains the equivalence $0\rotgap{1} \toeplitzeq 010\gap$ 
since $(0\rotgap{1}) \Tcmpx (0\rotgap{1}) = 010\gap$
and because the equivalence classes induced by $\stoeplitzeq$ are closed under composition.
The 
idea of composing patterns $P,Q$ is to first take copies $P^n$ and $Q^m$ 
such that $\gapnr{P^n} = \length{Q^m}$ and then fill the sequence of gaps through $P^n$ by $Q^m$.
Recall that we defined $\gscale{n}{m} = \frac{m}{\gcd{n}{m}}$
(so $n \cdot \gscale{n}{m} = m \cdot \gscale{m}{n}$).
\begin{definition}\label{def:toeplitz}
  Let $\apat,\bpat$ be \hyph{$\aalph$}{patterns}, and define their \emph{Toeplitz composition} as follows:
  \begin{align*}
    \Tcomp{\apat}{\bpat} & = 
    \begin{cases}
    \apat & \text{if $\gapnr{\apat} = 0$} \\
    \Tsubst{\apat^{d_1}}{\bpat^{d_2}} 
    & \text{if $\gapnr{\apat} \gt 0$, $d_1 = {\gscale{\gapnr{P}}{\length{Q}}}$, and $d_2 = {\gscale{\length{Q}}{\gapnr{P}}}$.}
    \end{cases}  
  \end{align*}
  where $\Tsubst{u}{v}$ is defined for all words $u,v\in\wrd{(\gaalph)}$ 
  such that $\gapnr{u} = \length{v}$ by 
  \begin{align*}
    \Tsubst{x}{\wrdemp} & = x &
    \Tsubst{(a \, x)}{y} & = a \,  \Tsubst{x}{y} &
    \Tsubst{(\agap \, x)}{b \, y} & = \funap{\agap}{b} \,  \Tsubst{x}{y} &
    \Tsubst{(\agap \, x)}{\bgap \, y} & = (\fgap{\funcomp{\agap}{\bgap}}) \, \Tsubst{x}{y}
  \end{align*}
  where $a,b\in\aalph$, $\agap,\bgap\in\symgroup{\aalph}$, and $x,y \in \wrd{(\gaalph)}$.%
  \footnote{Note that the recursive calls of $\Tsubst{u}{v}$ preserve the requirement $\gapnr{u} = \length{v}$.}

  For a \hyph{$\aalph$}{pattern}~$\apat$ and integer $k\ge 0$ 
  we define $\Tpow{\apat}{k}$ 
  by $\Tpow{\apat}{0} = \gap$ and $\Tpow{\apat}{n+1} = \apat \Tcmpx \Tpow{\apat}{n}$.
\end{definition}
\begin{example}
  To compute 
  $(0\gap\gap 1\gap\gap) \Tcmpx (234567)$
  we take $d_1 = 3$ and $d_2 = 2$:
  \[
    (0\gap\gap 1\gap\gap) \Tcmpx (234567) 
    = \Tsubst{(0\gap\gap 1\gap\gap)^3}{(234567)^2}
    = 023 145 \, 067 123 \, 045 167
  \]
\end{example}

\begin{proposition}\label{prop:Tpat:monoid}
  The set of $\aalph$-patterns forms a monoid 
  with pattern composition as its operation
  and $\gap$ as its identity element. 
\end{proposition}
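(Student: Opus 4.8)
The plan is to check the three monoid laws for $\aalph$-patterns under the operation $\sTcmp$ with unit $\gap$: closure, the unit laws, and associativity. Closure is read directly off Definition~\ref{def:toeplitz}: if $\gapnr{\apat}=0$ then $\Tcomp{\apat}{\bpat}=\apat$, and if $\gapnr{\apat}>0$ then $\Tcomp{\apat}{\bpat}=\Tsubst{\apat^{d_1}}{\bpat^{d_2}}$ with $\gapnr{\apat^{d_1}}=d_1\cdot\gapnr{\apat}=\lcm{\gapnr{\apat}}{\length{\bpat}}=d_2\cdot\length{\bpat}=\length{\bpat^{d_2}}$ (using $n\cdot\gscale{n}{m}=\lcm{n}{m}$), so the finite operation applies and returns a nonempty word over $\gaalph$ of length $d_1\cdot\length{\apat}$, again a pattern. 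For the unit laws I would first prove the two auxiliary identities $\Tsubst{\gap^{n}}{w}=w$ whenever $\length{w}=n$, and $\Tsubst{w}{\gap^{\gapnr{w}}}=w$, by easy inductions on $n$ and on $\length{w}$ from the defining clauses of the finite $\Tsubst{\cdot}{\cdot}$, recalling that $\gap=\mrm{id}_{\aalph}$ so $\agap\funcompx\gap=\agap$ and $\gap\funcompx\bgap=\bgap$. Since composing $\gap$ on the left gives $d_1=\length{\apat}$, $d_2=1$, and composing $\gap$ on the right gives $d_1=1$, $d_2=\gapnr{\apat}$ (with $\gapnr{\apat}=0$ settled by the first clause), these identities yield $\Tcomp{\gap}{\apat}=\apat=\Tcomp{\apat}{\gap}$.

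For associativity I would first record two ``dimension formulas'', valid for all patterns under the convention $\gcd{0}{m}=m$:
\[
  \length{\Tcomp{\apat}{\bpat}}=\frac{\length{\apat}\cdot\length{\bpat}}{\gcd{\gapnr{\apat}}{\length{\bpat}}},
  \qquad
  \gapnr{\Tcomp{\apat}{\bpat}}=\frac{\gapnr{\apat}\cdot\gapnr{\bpat}}{\gcd{\gapnr{\apat}}{\length{\bpat}}}.
\]
The first is immediate from $\length{\Tcomp{\apat}{\bpat}}=d_1\cdot\length{\apat}$; the second follows because in $\Tsubst{u}{v}$ with $\gapnr{u}=\length{v}$ every position of $v$ fills exactly one gap of $u$, becoming a letter if it was a letter and a (possibly trivial) permutation --- hence again a gap --- if it was a gap, so $\gapnr{\Tsubst{u}{v}}=\gapnr{v}=d_2\cdot\gapnr{\bpat}$. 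Next I would establish the link announced in the text, $(\Tcomp{\apat}{\bpat})^\omega=\Tsubst{\apat^\omega}{\bpat^\omega}$, via the lemma $\Tsubst{u^\omega}{v^\omega}=(\Tsubst{u}{v})^\omega$ for finite $u,v$ with $\gapnr{u}=\length{v}$: scanning $u^\omega$ one $u$-block at a time, each block carries exactly $\gapnr{u}=\length{v}$ gaps and so consumes exactly one $v$-block of $v^\omega$, making the output periodic with period $\Tsubst{u}{v}$; applied to $u=\apat^{d_1}$, $v=\bpat^{d_2}$ with $(\apat^{d_1})^\omega=\apat^\omega$ this gives the link, the case $\gapnr{\apat}=0$ being trivial since then $\apat^\omega$ has no gaps.

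Since a pattern $w$ is recovered from $w^\omega$ as its prefix of length $\length{w}$, two patterns coincide as soon as they have equal $\omega$-powers and equal lengths; hence associativity reduces to matching these two quantities for $\Tcomp{(\Tcomp{\apat}{\bpat})}{\cpat}$ and $\Tcomp{\apat}{(\Tcomp{\bpat}{\cpat})}$. For the $\omega$-powers, applying the link to each outer composition and then once more to the inner one rewrites the two sides as $\Tsubst{\Tsubst{\apat^\omega}{\bpat^\omega}}{\cpat^\omega}$ and $\Tsubst{\apat^\omega}{\Tsubst{\bpat^\omega}{\cpat^\omega}}$, which agree by associativity of $\Tsubst{\cdot}{\cdot}$ on $\strp{(\gaalph)}$ (the monoid structure recorded above). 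For the lengths, after disposing of the cases $\gapnr{\apat}=0$ and $\gapnr{\apat}>0=\gapnr{\bpat}$ (in which both sides reduce outright, by the first clause, to $\apat$ resp.\ to $\Tcomp{\apat}{\bpat}$), all gap counts occurring are positive, so I may compare $p$-adic valuations: plugging the dimension formulas into themselves and writing $x=v_p(\length{\bpat})-v_p(\gapnr{\apat})$, $y=v_p(\length{\cpat})-v_p(\gapnr{\bpat})$, the desired equality of lengths collapses for each prime $p$ to the arithmetic identity
\[
  \max(x,0)+\max\bigl(y-\max(-x,0),\,0\bigr)=\max\bigl(x+\max(y,0),\,0\bigr)\qquad(x,y\in\ints),
\]
which is verified by a short case split on the signs of $x$ and $y$.

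I expect the length bookkeeping to be the main obstacle: it is not deep, but one must first isolate the two clean dimension formulas, absorb the degenerate (gapless) cases into the $\gcd{0}{m}=m$ convention, and then notice that the whole computation for the triple composition is governed by the small $\max$-identity above. A secondary, purely bureaucratic point is keeping the finite and infinite versions of $\Tsubst{\cdot}{\cdot}$ apart in the proof of the $\omega$-power link, which is however immediate from their shared recursive clauses.
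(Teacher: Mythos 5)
The paper states this proposition without proof (it is treated as a routine verification), so there is no official argument to compare against; judged on its own, your proof is correct and complete. Closure and the two unit laws are checked exactly as one would expect from Definition~\ref{def:toeplitz}, and your two dimension formulas are precisely the content of Lemma~\ref{lem:Tcmp:length} (with the sensible convention $\gcd{0}{m}=m$ absorbing the gapless case). The interesting choice is your treatment of associativity: rather than verifying $\Tcomp{(\Tcomp{\apat}{\bpat})}{\cpat}=\Tcomp{\apat}{(\Tcomp{\bpat}{\cpat})}$ directly on finite words --- which would force an unpleasant bookkeeping of the nested replication factors $d_1,d_2$ --- you factor the problem through the $\omega$-power map, using $(\Tcomp{\apat}{\bpat})^\omega=\Tsubst{\apat^\omega}{\bpat^\omega}$ (the homomorphism property the paper records in the lemma following the proposition, which you prove independently, so there is no circularity) together with associativity of the infinite substitution and the observation that a pattern is determined by its $\omega$-power and its length. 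This cleanly splits associativity into a "content" part (delegated to the monoid of infinite words, which the paper also asserts without proof, so relying on it matches the paper's own level of rigor) and a "length" part, and your reduction of the latter to the single $\max$-identity $\max(x,0)+\max(y-\max(-x,0),0)=\max(x+\max(y,0),0)$ via $p$-adic valuations checks out, as do the two degenerate cases $\gapnr{\apat}=0$ and $\gapnr{\apat}>0=\gapnr{\bpat}$ that you dispose of first. The only dependency worth flagging explicitly is associativity of the infinite $\Tsubst{\cdot}{\cdot}$; a one-line positionwise argument (the $k$-th gap of $\Tsubst{\astr}{\bstr}$ sits at the position of the gap of $\astr$ matched with the $k$-th gap of $\bstr$, and function composition is associative) would make the proof self-contained.
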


\begin{lemma}
  The map $\apat \mapsto \apat^\omega$ is a monoid homomorphism 
  from $\triple{\wrd{(\gaalph)}}{\sTcmp}{\gap}$
  to $\triple{\strp{(\gaalph)}}{\pair{\astr}{\bstr}\mapsto\Tsubst{\astr}{\bstr}}{\gap^\omega}$. 
\end{lemma}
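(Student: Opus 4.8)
The plan is to verify the two defining clauses of a monoid homomorphism. Preservation of the unit is immediate: the unit of the source monoid (Proposition~\ref{prop:Tpat:monoid}) is the one\nb-letter pattern~$\gap$, which maps to $\gap^\omega$, the unit of the target monoid; recall that $\gap$ denotes $\mrm{id}_\aalph$. The content is therefore the multiplicativity identity
\[
  (\Tcomp{\apat}{\bpat})^\omega \;=\; \Tsubst{\apat^\omega}{\bpat^\omega}
  \qquad\text{for all $\aalph$\nb-patterns $\apat,\bpat$,}
\]
with $\Tsubst{\cdot}{\cdot}$ on the right the substitution on infinite words of Definition~\ref{def:toeplitz:inf}; this is precisely the design goal announced just after Definition~\ref{def:toeplitz}, and it is all that remains to prove, since both structures in the statement are already known to be monoids.

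The main tool I would isolate first is a \emph{factorization lemma} for the infinite substitution: if $u\in\wrd{(\gaalph)}$ is finite with $\gapnr{u}=k$ and $v\in\wrd{(\gaalph)}$ has $\length{v}=k$, then
\[
  \Tsubst{(u\,x)}{(v\,y)} \;=\; (\Tsubst{u}{v})\cdot\Tsubst{x}{y}
  \qquad\text{for all $x,y\in\strp{(\gaalph)}$,}
\]
where $\Tsubst{u}{v}$ on the right is the \emph{finite} substitution of Definition~\ref{def:toeplitz}. I would prove this by induction on $\length{u}$. The base case $u=\wrdemp$ forces $v=\wrdemp$, and both sides reduce to $\Tsubst{x}{y}$. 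For the induction step, split on the first symbol of $u$: if it lies in $\aalph$, the clause $\Tsubst{(a\,x)}{y}=a\,\Tsubst{x}{y}$ of the infinite substitution peels it off without consuming from $v\,y$, and the induction hypothesis applies with the same~$v$; if it lies in $\symgroup{\aalph}$, then $k\ge1$, hence $v$ is nonempty, and one distinguishes further whether the first symbol of $v$ lies in $\aalph$ or in $\symgroup{\aalph}$, applies the corresponding clause, and then invokes the induction hypothesis, with the invariant relating gap\nb-counts and lengths maintained at every step. In each case the matching clause of the \emph{finite} substitution rebuilds the leading symbol of $\Tsubst{u}{v}$; along the way one also records that $\length{\Tsubst{u}{v}}=\length{u}$.

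Granting the factorization lemma, the multiplicativity identity follows uniformly by a short periodicity argument. In both cases of Definition~\ref{def:toeplitz} I can exhibit a \emph{nonempty} finite word $W=\Tcomp{\apat}{\bpat}$ together with a splitting $\apat^\omega=u\,\apat^\omega$ and $\bpat^\omega=v\,\bpat^\omega$ satisfying $\gapnr{u}=\length{v}$ and $\Tsubst{u}{v}=W$: if $\gapnr{\apat}=0$ take $u=\apat$ and $v=\wrdemp$, so that $\Tsubst{\apat}{\wrdemp}=\apat=\Tcomp{\apat}{\bpat}$; if $\gapnr{\apat}=q>0$ take $u=\apat^{d_1}$ and $v=\bpat^{d_2}$ with $d_1=\gscale{q}{\length{\bpat}}$ and $d_2=\gscale{\length{\bpat}}{q}$, so that $\gapnr{u}=d_1 q=\lcm{q}{\length{\bpat}}=d_2\length{\bpat}=\length{v}$ and $\Tsubst{\apat^{d_1}}{\bpat^{d_2}}=\Tcomp{\apat}{\bpat}$. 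The factorization lemma then yields $\Tsubst{\apat^\omega}{\bpat^\omega}=W\cdot\Tsubst{\apat^\omega}{\bpat^\omega}$, hence $\Tsubst{\apat^\omega}{\bpat^\omega}=W^n\cdot\Tsubst{\apat^\omega}{\bpat^\omega}$ for every~$n$; since $\length{W}\ge1$, the prefix of $\Tsubst{\apat^\omega}{\bpat^\omega}$ of length $n\length{W}$ equals $W^n$, and letting $n\to\infty$ gives $\Tsubst{\apat^\omega}{\bpat^\omega}=W^\omega=(\Tcomp{\apat}{\bpat})^\omega$, as required.

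I expect the only genuinely delicate point to be the bookkeeping around the infinite substitution, which is defined by a productive corecursion: equalities between infinite words must be argued either coinductively or, as above, by comparing finite prefixes of unbounded length, and the case split in the factorization lemma has to be carried out so that the invariant $\gapnr{u}=\length{v}$ is preserved by every recursive call. Once that discipline is fixed, both the factorization lemma and the periodicity step are routine calculations.
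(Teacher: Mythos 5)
Your proof is correct. The paper states this lemma without proof, so there is nothing to compare against directly; your argument supplies exactly the justification the authors leave implicit. The two ingredients you isolate are the right ones: the factorization lemma $\Tsubst{(u\,x)}{(v\,y)} = (\Tsubst{u}{v})\cdot\Tsubst{x}{y}$ under the invariant $\gapnr{u}=\length{v}$ (which cleanly reconciles the corecursive substitution on infinite words with the finite substitution of Definition~\ref{def:toeplitz}), and the fixed-point step $\sigma = W\sigma \Rightarrow \sigma = W^\omega$ for nonempty $W$, handled correctly via prefixes of unbounded length. The case analysis and the bookkeeping of the invariant check out, including the degenerate case $\gapnr{\apat}=0$ with $v=\wrdemp$.
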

This immediately implies $(\Tpow{\apat}{k})^\omega = \toeplitzi{k}{\apat}$, 
and hence $\toeplitz{\apat} = \lim_{k\to\infinity}{\Tpow{\apat}{k}}$.

The length and number of gaps of a composed pattern $\Tcomp{P}{Q}$ are computed as follows:
\begin{lemma}\label{lem:Tcmp:length}
  $\length{\apat \Tcmpx \bpat} = \gscale{\gapnr{\apat}}{\length{\bpat}} \cdot \length{\apat}$
  \,and\, 
  $\gapnr{\apat \Tcmpx \bpat} = \gscale{\length{\bpat}}{\gapnr{\apat}} \cdot \gapnr{\bpat}$ 
\end{lemma}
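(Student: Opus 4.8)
The plan is to prove the two identities by directly unfolding the definitions of Toeplitz composition and of $\xi$. Recall that for $\gapnr{\apat} > 0$ we have $\apat \Tcmpx \bpat = \Tsubst{\apat^{d_1}}{\bpat^{d_2}}$ with $d_1 = \gscale{\gapnr{\apat}}{\length{\bpat}}$ and $d_2 = \gscale{\length{\bpat}}{\gapnr{\apat}}$. The first step is to record two elementary facts about the partial operation $\Tsubst{u}{v}$ on finite words with $\gapnr{u} = \length{v}$: by a trivial induction on $u$ following the four defining clauses, (a) $\length{\Tsubst{u}{v}} = \length{u}$, since every clause consumes exactly one letter of $u$ and emits exactly one letter; and (b) $\gapnr{\Tsubst{u}{v}} = \gapnr{v}$, since a letter $a \in \aalph$ in $u$ contributes a non-gap, a gap of $u$ aligned with a letter of $v$ contributes a non-gap ($\agap$ is a bijection, so $\funap{\agap}{b} \in \aalph$), and a gap of $u$ aligned with a gap of $v$ contributes exactly one gap ($\fgap{\funcomp{\agap}{\bgap}} \in \symgroup{\aalph}$).

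Given (a) and (b), the computation is immediate. For the length: $\length{\apat \Tcmpx \bpat} = \length{\Tsubst{\apat^{d_1}}{\bpat^{d_2}}} = \length{\apat^{d_1}} = d_1 \cdot \length{\apat} = \gscale{\gapnr{\apat}}{\length{\bpat}} \cdot \length{\apat}$, which is exactly the claimed formula. (When $\gapnr{\apat} = 0$ one has $\apat \Tcmpx \bpat = \apat$ and $\gscale{0}{\length{\bpat}} = \length{\bpat}/\gcd{0}{\length{\bpat}} = \length{\bpat}/\length{\bpat} = 1$, so the formula still holds; it is worth a parenthetical remark to cover this edge case cleanly.) For the number of gaps: $\gapnr{\apat \Tcmpx \bpat} = \gapnr{\Tsubst{\apat^{d_1}}{\bpat^{d_2}}} = \gapnr{\bpat^{d_2}} = d_2 \cdot \gapnr{\bpat} = \gscale{\length{\bpat}}{\gapnr{\apat}} \cdot \gapnr{\bpat}$, again exactly as claimed. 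Here one should double-check that $\Tsubst{\apat^{d_1}}{\bpat^{d_2}}$ is well-defined, i.e.\ $\gapnr{\apat^{d_1}} = \length{\bpat^{d_2}}$; this is $d_1 \cdot \gapnr{\apat} = d_2 \cdot \length{\bpat}$, which is precisely the identity $n \cdot \gscale{n}{m} = m \cdot \gscale{m}{n}$ with $n = \gapnr{\apat}$, $m = \length{\bpat}$, noted right after the definition of $\xi$.

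There is essentially no obstacle: the only thing requiring care is the bookkeeping in facts (a) and (b), and in particular observing in (b) that because the gap symbols are \emph{permutations}, filling a gap with a letter always produces a letter, so the gap count of the result is governed entirely by the second argument. I would present (a) and (b) as a short displayed induction (or fold them into the proof inline as two sentences), then conclude with the two one-line chains of equalities above, inserting the well-definedness check and the $\gapnr{\apat} = 0$ remark where indicated.
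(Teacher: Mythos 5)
Your proof is correct and is exactly the direct unfolding of Definition~\ref{def:toeplitz} that the paper intends: the lemma is in fact stated there without proof, and your two bookkeeping facts about $\Tsubst{u}{v}$ (length governed by the first argument, gap count by the second, the latter because gaps are permutations and so filling a gap with a letter yields a letter), together with the identity $n \cdot \gscale{n}{m} = m \cdot \gscale{m}{n}$ for well-definedness, are all that is needed. The parenthetical treatment of the $\gapnr{\apat} = 0$ case is a sensible addition, since $\gscale{n}{m}$ is officially defined only for positive arguments.
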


The congruence classes induced by $\stoeplitzeq$ are closed under 
concatenation and composition.
\begin{lemma}\label{lem:pattern:composition}
  Let $\apat$ be a $\aalph$-pattern and $k \geq 1$.
  Then $\apat^{k} 
  \toeplitzeq \apat$ 
  and $\apat^{(k)} 
  \toeplitzeq \apat$.
\end{lemma}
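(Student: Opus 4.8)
I would prove the two equivalences separately, in both cases by tracking the generations $\toeplitzi{j}{\cdot}$ of Definition~\ref{def:toeplitz:inf} and then passing to the limit that defines $\toeplitz{\cdot}$.

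For $\apat^{k} \toeplitzeq \apat$, the only point to observe is that $(\apat^{k})^\omega = \apat^\omega$ as infinite sequences, since $k \ge 1$. From this I would show $\toeplitzi{j}{\apat^{k}} = \toeplitzi{j}{\apat}$ for every $j \ge 0$ by induction on $j$: for $j = 0$ both sides equal $\gap^\omega$ (independently of the pattern), and the inductive step is
\[
  \toeplitzi{j+1}{\apat^{k}} = \Tsubst{(\apat^{k})^\omega}{\toeplitzi{j}{\apat^{k}}} = \Tsubst{\apat^\omega}{\toeplitzi{j}{\apat}} = \toeplitzi{j+1}{\apat},
\]
using $(\apat^{k})^\omega = \apat^\omega$ and the induction hypothesis. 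Taking the limit over $j$ yields $\toeplitz{\apat^{k}} = \toeplitz{\apat}$, i.e.\ $\apat^{k} \toeplitzeq \apat$.

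For $\apat^{(k)} \toeplitzeq \apat$, I would use associativity of Toeplitz composition (Proposition~\ref{prop:Tpat:monoid}) together with the identity $\toeplitzi{j}{\bpat} = (\Tpow{\bpat}{j})^\omega$ (the consequence of the monoid homomorphism $\bpat \mapsto \bpat^\omega$ recorded above). A routine induction on $j$ from associativity of $\sTcmp$ gives the power law $\Tpow{(\Tpow{\apat}{k})}{j} = \Tpow{\apat}{kj}$, whence
\[
  \toeplitzi{j}{\Tpow{\apat}{k}} = \bigl(\Tpow{(\Tpow{\apat}{k})}{j}\bigr)^\omega = \bigl(\Tpow{\apat}{kj}\bigr)^\omega = \toeplitzi{kj}{\apat} \qquad (j \ge 0).
\]
So $\bigl(\toeplitzi{j}{\Tpow{\apat}{k}}\bigr)_{j}$ is the subsequence $\bigl(\toeplitzi{kj}{\apat}\bigr)_{j}$ of the convergent sequence $\bigl(\toeplitzi{m}{\apat}\bigr)_{m}$ (it is here that $k \ge 1$ matters, so that the indices $kj$ are strictly increasing), and hence it converges to the same limit $\toeplitz{\apat}$. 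Thus $\toeplitz{\Tpow{\apat}{k}} = \toeplitz{\apat}$, i.e.\ $\apat^{(k)} \toeplitzeq \apat$.

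I do not anticipate a genuine obstacle: both parts are short limit arguments built on the recursion for $\toeplitzi{\cdot}{\cdot}$ and on the monoid structure of $\aalph$-patterns under $\sTcmp$, all of which is already established. The mild point requiring attention is the legitimacy of the subsequence step (that $kj$ runs through a cofinal subset of $\nat$), which is precisely where $k \ge 1$ enters; the power law $\Tpow{(\Tpow{\apat}{k})}{j} = \Tpow{\apat}{kj}$ is standard monoid bookkeeping. A shorter-looking alternative for the second part would be to invoke uniqueness of the solution of $x = \Tsubst{\apat^\omega}{x}$ and verify that $\toeplitz{\apat}$ solves $x = \Tsubst{(\Tpow{\apat}{k})^\omega}{x}$ by peeling off copies of $\apat^\omega$ one at a time (via associativity and the fixed-point identity $\Tsubst{\apat^\omega}{\toeplitz{\apat}} = \toeplitz{\apat}$), but that variant additionally requires $\Tpow{\apat}{k}$ to start with a letter of $\aalph$, which the subsequence argument sidesteps.
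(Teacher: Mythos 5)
Your argument is correct. The paper states this lemma without proof, but your two inductions are exactly the intended justification: for $\apat^{k}$ the generations $\toeplitzi{j}{\apat^k}$ and $\toeplitzi{j}{\apat}$ coincide outright because $(\apat^{k})^\omega = \apat^\omega$, and for $\apat^{(k)}$ the power law $\Tpow{(\Tpow{\apat}{k})}{j} = \Tpow{\apat}{kj}$ together with the homomorphism $\bpat \mapsto \bpat^\omega$ exhibits $\bigl(\toeplitzi{j}{\Tpow{\apat}{k}}\bigr)_j$ as a cofinal subsequence of $\bigl(\toeplitzi{m}{\apat}\bigr)_m$, which is where $k \ge 1$ is used. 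No gaps.
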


\begin{example}
%
  The classical Hanoi sequence~\oeis{A101607}
  is the sequence $\hanoi$ of moves 
  such that the prefix of length $2^N-1$ of $\hanoi$
  transfers $N$ disks 
  from peg $A$ to peg $B$ if $N$ is odd,
  and to peg~$C$ if $N$ is even. 
%
%
  We represent moving the topmost disk 
  from peg~$X$ to $Y$ by the pair~$\pair{X}{Y}$, 
  and map moves to $\nalph{6}$ using 
  \begin{align*}
    \pair{A}{B} & \mapsto 0 & \pair{B}{C} & \mapsto 2 & \pair{C}{A} & \mapsto 4 \\
    \pair{B}{A} & \mapsto 1 &
    \pair{C}{B} & \mapsto 5 & 
    \pair{A}{C} & \mapsto 3
  \end{align*}
  In~\cite{allo:dres:1990,allo:bach:1992} it is shown that 
  the sequence $\hanoi$ is the Toeplitz word generated by the pattern $032\gap450\gap214\gap$\,:
  \begin{align*} 
  \hanoi 
  & = \toeplitz{032\gap450\gap214\gap} \\
  & = 
    032 0 450 3 214 2 
    032 0 450 4 214 5 
    032 0 450 3 214 2 
    032 1 450 4 214 2 
    \ldots 
  \end{align*}
  Now let $f = \rotgap{3}$,
  the involution on $\nalph{6}$ 
  corresponding to swapping $B$s and $C$s in moves~$\pair{X}{Y}$.
  Then the above pattern can be simplified to $0f2f4f$, that is,
  \begin{align*}
    \hanoi = \toeplitz{0f2f4f}
  \end{align*}
  as composing the pattern~$0f2f4f$ with itself yields the pattern ${032\gap450\gap214\gap}$
  from above:
  \begin{align*}
       \Tcomp{(0f2f4f)}{(0f2f4f)}
       = {032f^2 450f^2 214 f^2} = {032\gap450\gap214\gap}
  \end{align*}
  The sequence of directions obtained by taking $\hanoi$ modulo $2$
  (we took even (odd) numbers to represent (counter)clockwise moves) 
  is the period doubling sequence~$\PD$, see~\cite{allo:bach:1992}:
  \begin{align*}
    (\hanoi \bmod 2) = (\toeplitz{0f2f4f} \bmod 2) = \toeplitz{0\rotgap{1}0\rotgap{1}0\rotgap{1}} = \toeplitz{0\rotgap{1}} = \PD 
  \end{align*}

\end{example}

The recurrence equations for a Toeplitz word are easy to establish.
\begin{lemma}\label{lem:toeplitz:recurrence}
  Let $P = a_1 \ldots a_r$ be a $\aalph$-pattern with $a_1 \in \aalph$, $r \ge 2$. 
  Let $h_1 < h_2 < \ldots < h_q$ be the sequence of indices $h$ 
  such that $a_h\in\symgroup{\aalph}$ (so $P$ has $q \lt r$ gaps). 
  Then for all $n \in \nat$:
  \vspace{-1ex}
  \begin{align*}
    \nth{\toeplitz{P}}{r n + i}   & = a_i 
    && \text{if $1 \le i \le r$ and $a_i \in \aalph$\,, and} \\
    \nth{\toeplitz{P}}{r n + h_j} & = \funap{\agap}{\nth{\toeplitz{P}}{q n + j}}
    && \text{for all $1 \le j \le q$ with $a_{h_j} = \agap$\punc.}
  \end{align*}
\end{lemma}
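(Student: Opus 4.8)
The plan is to read both recurrences directly off the fixpoint characterization $\toeplitz{P} = \Tsubst{P^\omega}{\toeplitz{P}}$ noted above. Write $\bstr = \toeplitz{P}$ and $\astr = P^\omega$, so that $\astr$ is the periodic repetition of $P = a_1 \ldots a_r$. Since $\bstr \in \strp{\aalph}$ contains no gaps, only the first two clauses of Definition~\ref{def:toeplitz:inf} are ever triggered while unfolding $\Tsubst{\astr}{\bstr}$: a leading letter $a$ of the first argument is emitted unchanged and the second argument is left untouched, whereas a leading gap $f$ emits $\funap{f}{b}$ and consumes the leading symbol $b$ of the second argument. So computing $\Tsubst{\astr}{\bstr}$ amounts to scanning $\astr$ from the left, copying letters verbatim and, at the $m$-th gap encountered, splicing in $\funap{f}{\nth{\bstr}{m}}$.

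To turn this picture into a proof I would first establish, by induction on $N \ge 0$, the \emph{auxiliary claim} that the length-$rN$ prefix of $\Tsubst{\astr}{\bstr}$ coincides with $\Tsubst{P^N}{v_N}$, where $v_N$ denotes the length-$qN$ prefix of $\bstr$ and $\Tsubst{\cdot}{\cdot}$ on finite words is the operation from Definition~\ref{def:toeplitz}; this is well-typed since $\gapnr{P^N} = qN = \length{v_N}$. The induction step splits $P^N = P^{N-1}\,P$, uses the hypothesis to peel off $P^{N-1}$ (which consumes precisely $v_{N-1}$, the first $q(N-1)$ symbols of $\bstr$), and then reads the trailing copy of $P$ symbol by symbol via the two active clauses, consuming exactly the next $q$ symbols of $\bstr$. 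The one point needing care is this accounting of how many symbols of $\bstr$ are consumed, but it is immediate once one notes that a clause application consumes a symbol of the second argument exactly when it outputs the image of a gap.

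With the claim in hand, fix $n \in \nat$ and $1 \le i \le r$ and take $N = n+1$, so that position $rn + i$ lies inside the prefix $\Tsubst{P^{n+1}}{v_{n+1}}$, whose value there we now read off. If $a_i \in \aalph$ then $a_i$ was emitted unchanged, giving $\nth{\toeplitz{P}}{rn+i} = a_i$. If instead $a_i = a_{h_j} = \agap \in \symgroup{\aalph}$, then among the positions $1, \ldots, rn + h_j$ of $\astr$ there are exactly $nq$ gaps coming from the $n$ complete copies of $P$, plus the $j$ gaps $a_{h_1}, \ldots, a_{h_j}$ of the partial copy $a_1 \ldots a_{h_j}$, hence $nq + j$ gaps in total; so this gap is the $(nq+j)$-th one encountered and it consumed $\nth{\bstr}{nq+j} = \nth{\toeplitz{P}}{qn+j}$, which yields $\nth{\toeplitz{P}}{rn+h_j} = \funap{\agap}{\nth{\toeplitz{P}}{qn+j}}$, as claimed.

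The only genuine obstacle is the index arithmetic around the consumed symbols of $\bstr$ — in particular matching the $0$-based parameter $n$ of the statement against the $1$-based indexing of $\strp{\aalph}$, and confirming the gap count $nq + j$; the remainder is a routine unfolding of the two pass-through clauses. As a sanity check one can instantiate the formulas on the period doubling pattern $010\gap$ (with $r = 4$, $q = 1$, $h_1 = 4$ and $\agap = \mrm{id}$), recovering $\nth{\PD}{4n+4} = \nth{\PD}{n+1}$ together with the constant values $0, 1, 0$ at the residues $4n+1$, $4n+2$, $4n+3$.
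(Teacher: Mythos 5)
Your proof is correct. The paper gives no proof of this lemma (it is introduced only with the remark that the recurrence equations are ``easy to establish''), and your argument --- unfolding the fixpoint equation $\toeplitz{P} = \Tsubst{P^\omega}{\toeplitz{P}}$, noting that only the two gap\nb-free clauses of the substitution ever fire because $\toeplitz{P}$ contains no gaps, and counting that the gap at position $rn+h_j$ of $P^\omega$ is the $(nq+j)$-th gap encountered and hence consumes $\nth{\toeplitz{P}}{qn+j}$ --- is exactly the intended routine verification, with the index bookkeeping done correctly.
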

\begin{example}
  Consider the $\aalph$-pattern $\apat = a\agap b\bgap\cgap$ 
  for some $a,b\in\aalph$ and $\agap,\bgap,\cgap\in\symgroup{\aalph}$.
  With Lemma~\ref{lem:toeplitz:recurrence} we obtain the following 
  recurrence equations for $\astr = \toeplitz{\apat}$, for all $n\in\nat$:
  \begin{align*}
    \nth{\astr}{5n+1} & = a &
    \nth{\astr}{5n+2} & = \funap{\agap}{\nth{\astr}{3n+1}} & 
    \nth{\astr}{5n+3} & = b \\ 
    \nth{\astr}{5n+4} & = \funap{\bgap}{\nth{\astr}{3n+2}} & 
    \nth{\astr}{5n+5} & = \funap{\cgap}{\nth{\astr}{3n+3}}
  \end{align*}
\end{example}

Toeplitz words of type~$\pair{r}{1}$ can be obtained by 
iterating an $r$-uniform morphism~\cite{cass:karh:1997},
whence they are $r$-automatic~\cite{allo:shal:2003}.
\begin{proposition}\label{prop:one:gap:morphism}
  \mbox{}\nopagebreak
  \begin{enumerate}
    \item 
      Let $\apat\in\aalph\wrd{(\gaalph)}$ and define 
      $h \funin \setop{\wrd{\aalph}}$ 
      by $\funap{h}{a} = \Tcomp{\apat}{a}$ ($a\in\aalph$).
      Then $\funnap{h}{\omega}{a} = \toeplitz{\apat}$.
    \item
      Let $h \funin \setop{\wrd{\aalph}}$ be the morphism defined by
      $\funap{h}{a} = b u \funap{\agap}{a} v$ ($a \in \aalph$)
      for some fixed $b\in\aalph$, $u,v\in\wrd{\aalph}$, 
      and $\agap\in\symgroup{\aalph}$.
      Define the $\aalph$-pattern $P$ by $P = b u \agap v$.
      Then $\toeplitz{\apat} = \funnap{h}{\omega}{b}$.
  \end{enumerate}
\end{proposition}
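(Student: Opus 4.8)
The plan is to derive (ii) from (i), and to prove (i) by establishing that $\toeplitz{\apat}$ is a fixed point of the morphism $h$ and lies above every iterate $\funap{\super{h}{k}}{a_1}$ in the prefix order. Throughout, $\apat$ is a one-gap pattern (type $\pair{r}{1}$, the case at issue): $r=\length{\apat}\ge 2$, its first letter $a_1$ lies in $\aalph$, and its unique gap is some $\agap\in\symgroup{\aalph}$. First I would unfold $\Tcomp{\apat}{c}$ for a single letter $c\in\aalph$: since $\gapnr{\apat}=1=\length{c}$, the case $\gapnr{\apat}>0$ of Definition~\ref{def:toeplitz} has $d_1=d_2=1$, so $\Tcomp{\apat}{c}=\Tsubst{\apat^{1}}{c^{1}}=\Tsubst{\apat}{c}$, i.e.\ the word $\apat$ with its gap $\agap$ replaced by $\funap{\agap}{c}$. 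Hence $h$ is an $r$-uniform morphism, $\funap{h}{a_1}$ begins with $a_1$ (so $h$ is prolongable on $a_1$ and $\length{\funap{\super{h}{k}}{a_1}}=r^{k}$), and in (i) the expression $\funnap{h}{\omega}{a}$ is read with $a=a_1$. Statement (ii) is then a special case: for $\apat=\cat{b}{\cat{u}{\cat{\agap}{v}}}$ one has $\Tcomp{\apat}{c}=\cat{b}{\cat{u}{\cat{\funap{\agap}{c}}{v}}}=\funap{h}{c}$ and first letter $a_1=b$, so (i) applied to this $\apat$ gives $\funnap{h}{\omega}{b}=\toeplitz{\apat}$. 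So it suffices to prove (i).

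The crux is the identity $\funap{h}{\astr}=\Tsubst{\apat^\omega}{\astr}$ for every $\astr\in\strp{\aalph}$, with $h$ acting letterwise. I would get it from the block-splitting law $\Tsubst{\cat{x}{y}}{\cat{v}{w}}=\cat{\Tsubst{x}{v}}{\Tsubst{y}{w}}$, valid whenever $\gapnr{x}=\length{v}$, which is a routine induction on $\length{x}$ straight from the clauses of Definition~\ref{def:toeplitz}; the same induction yields the analogous law for the infinite operation of Definition~\ref{def:toeplitz:inf} (splitting a finite prefix off an infinite second argument). Since $\apat$ has exactly one gap, each copy of $\apat$ inside $\apat^\omega$ consumes exactly one letter of $\astr=c_1c_2\ldots$, so splitting the copies off one at a time gives $\Tsubst{\apat^\omega}{\astr}=\Tsubst{\apat}{c_1}\,\Tsubst{\apat}{c_2}\cdots=\funap{h}{c_1}\funap{h}{c_2}\cdots=\funap{h}{\astr}$. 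It is precisely here that the one-gap hypothesis is used: with two or more gaps the copies of $\apat$ and the letters of $\astr$ fall out of phase, and both this identity and the conclusion of (i) fail.

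To finish (i), recall from Section~\ref{sec:toeplitz} that $\toeplitz{\apat}$ satisfies $x=\Tsubst{\apat^\omega}{x}$; combined with the crux identity this gives $\toeplitz{\apat}=\funap{h}{\toeplitz{\apat}}$, so $\toeplitz{\apat}$ is a fixed point of $h$. As $\toeplitz{\apat}$ begins with $a_1$, we have $\funap{\super{h}{0}}{a_1}=a_1\sqsubseteq\toeplitz{\apat}$, and since every morphism is monotone for the prefix order, $\funap{\super{h}{k+1}}{a_1}=\funap{h}{\funap{\super{h}{k}}{a_1}}\sqsubseteq\funap{h}{\toeplitz{\apat}}=\toeplitz{\apat}$; by induction $\funap{\super{h}{k}}{a_1}\sqsubseteq\toeplitz{\apat}$ for all $k$. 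Since $\length{\funap{\super{h}{k}}{a_1}}=r^{k}\to\infty$, the words $\funap{\super{h}{k}}{a_1}$ form an ascending prefix chain whose limit is $\toeplitz{\apat}$, i.e.\ $\funnap{h}{\omega}{a_1}=\toeplitz{\apat}$, as claimed.

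I expect the only non-routine step to be the crux identity of the second paragraph, and even that reduces to the block-splitting induction; unfolding Toeplitz composition, monotonicity of morphisms, and reading off the limit are all bookkeeping. The one subtlety to watch is carrying the block-splitting argument across successive copies of $\apat$ inside the infinite operation $\Tsubst{\apat^\omega}{\cdot}$ of Definition~\ref{def:toeplitz:inf}; this is also exactly where one sees why (i) must be restricted to one-gap patterns.
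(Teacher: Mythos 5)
Your proof is correct. The paper states this proposition without proof --- it is presented as a known fact, with the surrounding sentence citing Cassaigne and Karhum\"aki --- so there is no in-paper argument to compare against; your write-up supplies the missing details in the natural way. The two load-bearing points are exactly the ones you isolate: first, for a one-gap pattern the substitution $\Tsubst{\apat^\omega}{\astr}$ splits block by block as $\Tsubst{\apat}{c_1}\,\Tsubst{\apat}{c_2}\cdots = \funap{h}{c_1}\funap{h}{c_2}\cdots$, because each copy of $\apat$ consumes exactly one letter of $\astr = c_1 c_2 \ldots$; second, the defining equation $\toeplitz{\apat} = \Tsubst{\apat^\omega}{\toeplitz{\apat}}$ then exhibits $\toeplitz{\apat}$ as a fixed point of $h$ beginning with $a_1$, and prefix-monotonicity of $h$ together with $\length{\funap{\super{h}{k}}{a_1}} = r^k \to \infty$ forces $\funnap{h}{\omega}{a_1} = \toeplitz{\apat}$. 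Your remark that the statement of (i) must be read under the one-gap hypothesis, and that the key identity (hence the conclusion) genuinely fails for patterns with two or more gaps --- since $h$ would then fill all gaps of a block from a single letter while the Toeplitz construction feeds them consecutive letters --- is an accurate and worthwhile clarification of the proposition as printed.
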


\section{Additive Toeplitz Words \\ Generated by Single-Gap Patterns}\label{sec:toeplitz:additive}
We characterize additive Toeplitz words of type $\pair{r}{1}$. 
More precisely, we characterize the set~$X$ of \hyph{one}{gap} Toeplitz patterns~$\apat$ 
such that $\toeplitz{\apat}$ is additive if and only if $\apat$ is in $X$.
As it turns out, the discrete logarithm to the base $g$
modulo a prime number~$p$ (see Section~\ref{sec:prelims})
plays a key role in the construction of these patterns. 

We adopt the following convention:
Whenever a $\aalph$-pattern~$\apat$ generates a non-surjective Toeplitz word
(i.e., $\nth{\toeplitz{\apat}}{\posnat} \subsetneq \aalph$),
we identify gaps~$\agap$ occurring in $\apat$
with all bijections that coincide with $\agap$ on the letters occurring in $\toeplitz{\apat}$.

\begin{lemma}\label{lem:gap:end}
  Let $\apat = a_1 a_2 \ldots a_\ell$ 
  such that $\toeplitz{\apat}$ is non-trivially additive. 
  Then $a_1 = 0$ 
  and $a_\ell = \rotgap{d}$ for some $d\in\aalph$.
\end{lemma}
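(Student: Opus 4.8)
The plan is to read off both constraints from the recurrence equations of a Toeplitz word (Lemma~\ref{lem:toeplitz:recurrence}) together with complete additivity, after a couple of degeneracy reductions. Write $w = \toeplitz{\apat}$ and let $h_1 < \cdots < h_q$ list the gap positions of $\apat$, so $q = \gapnr{\apat}$. Since a non-trivially additive sequence is a totally defined map $\posnat\to\aalph$ that, by Lemma~\ref{lem:additive:periodic}, is not ultimately periodic, I first record two observations: (a) $\apat$ has at least one gap, for otherwise $w = \apat^\omega$ is purely periodic, contradicting Lemma~\ref{lem:additive:periodic}; and (b) $a_1\in\aalph$, for if $a_1$ were a gap then a straightforward induction on $k$ shows every approximant $\toeplitzi{k}{\apat}$ begins with a gap (in $\toeplitzi{k+1}{\apat} = \Tsubst{\apat^\omega}{\toeplitzi{k}{\apat}}$ the leading gap of $\apat^\omega$ is filled by the leading gap of $\toeplitzi{k}{\apat}$, producing their composition, again a gap), so $w$ would not be totally defined. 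Combining (a) and (b) gives $\ell = \length{\apat}\ge 2$.

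For $a_1 = 0$: by Lemma~\ref{lem:toeplitz:recurrence} (with $n = 0$, $i = 1$) we get $w(1) = a_1$, while complete additivity gives $w(1) = 0$; hence $a_1 = 0$.

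For $a_\ell$: first suppose $a_\ell = b\in\aalph$. Then Lemma~\ref{lem:toeplitz:recurrence} with $i = \ell$ yields $w(\ell m) = b$ for all $m\ge1$, so by additivity $w(\ell)+w(m) = b$ for all $m$; thus $w$ is constant and, as $w(1) = 0$, trivial --- a contradiction. So $a_\ell$ is a gap, say $f = a_\ell$; it is the $q$-th gap, at index $h_q = \ell$, so the gap clause of Lemma~\ref{lem:toeplitz:recurrence} reads $w(\ell m) = f\bigl(w(qm)\bigr)$ for all $m\ge1$, and expanding both sides by complete additivity turns this into
\[
  w(\ell) + w(m) \;=\; f\bigl(w(q) + w(m)\bigr)\qquad(m\in\posnat).
\]
Now put $S = \{\, w(m)\mid m\in\posnat\,\}$, the set of letters occurring in $w$. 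Since $w$ is completely additive, $0 = w(1)\in S$ and $w(m) + w(m') = w(mm')\in S$, so $S$ is a finite submonoid of the group $\aalph$, hence a subgroup; as $w(q)\in S$, translation by $w(q)$ permutes $S$. Therefore, as $w(m)$ runs over $S$ so does $w(q) + w(m)$, and the displayed identity says exactly that $f(y) = \bigl(w(\ell) - w(q)\bigr) + y$ for every $y\in S$. Thus on every letter occurring in $\toeplitz{\apat}$ the gap $f$ agrees with the rotation $\rotgap{d}$ for $d = w(\ell) - w(q)$, and by the convention of identifying a gap with any bijection coinciding with it on those letters, $a_\ell = \rotgap{d}$.

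The one step that is not bookkeeping is the passage from ``$f$ agrees with a rotation on $w(q) + S$'' to ``$f$ agrees with a rotation on $S$'': this needs $S$ to be closed under translation by $w(q)$, which is precisely where one must use that $w$ is the image of a \emph{completely additive} sequence (so that $S$ is a submonoid, hence a subgroup of $\aalph$), not just an arbitrary Toeplitz word. The remaining ingredients --- the reductions $q\ge1$, $\ell\ge2$ and $a_\ell\notin\aalph$, and tracking in Lemma~\ref{lem:toeplitz:recurrence} that the last gap is the $q$-th one so its companion index runs over the multiples of $q = \gapnr{\apat}$ --- are routine.
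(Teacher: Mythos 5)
Your proof is correct and follows essentially the same route as the paper's: read $a_1=0$ off the recurrence plus $\toeplitz{\apat}(1)=0$, rule out $a_\ell\in\aalph$ because it would force $\div{\toeplitz{\apat}}{\ell}$ to be simultaneously constant and a shift of $\toeplitz{\apat}$, and then use additivity on $\toeplitz{\apat}(\ell m)=f(\toeplitz{\apat}(qm))$ to see the final gap acts as a rotation on the letters occurring in $\toeplitz{\apat}$. The one point where you go beyond the paper --- observing that the image of a completely additive sequence is a finite submonoid, hence a subgroup of $\aalph$, so that translation by $\toeplitz{\apat}(q)$ permutes it --- is a genuine detail the paper's one-line conclusion elides, and it is handled correctly.
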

\begin{proof}
  Let $\astr = \toeplitz{\apat}$.
  Additive sequences have intial value $0$, so $\nth{\astr}{1} = a_1 = 0$. 
  Moreover if $a_\ell \in \nalph{k}$ 
  then it follows from the definition of Toeplitz words that $\div{\astr}{\ell}$ 
  is a constant sequence:
  $\nth{\astr}{n\ell} = a_\ell$ for all $n\in\posnat$.
  On the other hand, we also have 
  $\div{\astr}{\ell} \similarx \astr$ by complete additivity of~$\astr$, that is,
  $\nth{\astr}{n\ell} = \nth{\astr}{n} + \nth{\astr}{\ell}$ for all $n\in\posnat$.
  This combination of facts only occurs if $\astr$ is constant zero, 
  contradicting the assumption that $\astr$ is non-periodic.
  Finally, by additivity of $\astr$ it follows that
  $\div{\astr}{\ell} = \astr + \nth{\astr}{\ell}$ 
  and hence 
  the bijection at position $\ell$ has to be a rotation
  for all elements in the image $\astr(\posnat)$.
\end{proof}

\newcommand{\CAP}{\mfrk{P}}
\noindent
We let $\CAP$ denote the set of one-gap $\aalph$-patterns
generating additive sequences: 
\begin{align*}
  \CAP 
  = \displset{\apat\in\wrd{(\galph{\aalph})}}
             {\text{$\toeplitz{\apat}$ is additive and $\gapnr{\apat} = 1$}}
\end{align*}

\begin{lemma}\label{lem:pat:composite:constant}
  Let $\apat\in\CAP$ with $\length{\apat} = nm$ for some $n,m\geq 2$.
  Then the arithmetic subsequence $\asub{\toeplitz{\apat}}{i}{n}$ is constant
  for every $i$ with $1 \le i \lt n$.
\end{lemma}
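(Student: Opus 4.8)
The plan is to show that $\toeplitz{\apat}$, when restricted to the arithmetic progression $i, i+n, i+2n, \ldots$, never "sees" the gap, hence is determined entirely by a fixed letter of $\apat$. The key observation is that since $\length{\apat} = nm$ and $\apat$ has exactly one gap, the single gap sits at some position $h$ with $1 \le h \le nm$. First I would write $\apat = \apat^{(1)}$ and pass to the equivalent pattern $\apat^{m}$ of length $nm$ via Lemma~\ref{lem:pattern:composition} — actually, since $\length{\apat} = nm$ already, this is unnecessary; but I do want to exploit that $\apat \toeplitzeq \apat^{(k)}$ for all $k \ge 1$ (Lemma~\ref{lem:pattern:composition}), so that I may assume the period length is divisible by $n$.

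Next I would invoke the recurrence equations of Lemma~\ref{lem:toeplitz:recurrence}: writing $\astr = \toeplitz{\apat}$ with pattern length $r = nm$ and single gap at index $h$, we have $\astr(rN + j) = a_j$ for every index $j \ne h$ with $1 \le j \le r$, and $\astr(rN + h) = \funap{\agap}{\astr(N+1)}$ for the gap. Now fix $i$ with $1 \le i < n$. The claim is that the set $\{i + n\ell : \ell \ge 0\}$ of positions hit by $\asub{\astr}{i}{n}$ meets the "gap positions" $\{h + rN : N \ge 0\} = \{h + nmN : N \ge 0\}$ in at most finitely many places unless something forces constancy — more precisely, I expect that along this progression the value is eventually (in fact always, after accounting for indexing conventions) equal to a single letter $a_j$ of $\apat$ with $j \not\equiv h$, because the residue $i \bmod n$ with $i < n$, combined with $r = nm$, pins down which residue class mod $n$ the progression lives in, and by Lemma~\ref{lem:gap:end} the gap is at the last position $a_\ell = \rotgap{d}$, i.e.\ $h = nm \equiv 0 \bmod n$, whereas $i \not\equiv 0 \bmod n$. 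Hence the progression $i + n\ell$ never hits a gap position at all, so by the first clause of Lemma~\ref{lem:toeplitz:recurrence} every term equals $a_j$ where $j$ is the residue of $i + n\ell$ modulo $r = nm$ — but as $\ell$ ranges, this residue ranges over $\{i, i+n, i+2n, \ldots, i+(m-1)n\}$, which need not be a single value. So I must be more careful: I would instead argue by \emph{induction on the $\aalph$-pattern structure} (or equivalently on the levels $\toeplitzi{k}{\apat}$), using that at level $k$ the gap positions in $\toeplitzi{k}{\apat}$ are exactly the multiples of $(nm)^k$ (up to the last-position normalization from Lemma~\ref{lem:gap:end}), and that the progression $i + n\mathbb{N}$ with $\gcd$-controlled step interacts with these in a self-similar way; descending, one shows the progression's values are governed by $\asub{\astr}{i/\gcd}{\cdot}$ at a coarser scale, which by additivity of $\astr$ (so that $\div{\astr}{\ell} = \astr + \astr(\ell)$, as in the proof of Lemma~\ref{lem:gap:end}) collapses to a constant.

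The cleanest route, which I would actually pursue: use additivity directly. Since $\apat \in \CAP$, the sequence $\astr$ is completely additive, and $n \mid nm = \length{\apat}$, so by Lemma~\ref{lem:gap:end} the last pattern position is a rotation and $\div{\astr}{nm}$ is constant $= \astr(nm)$. Now for $1 \le i < n$, consider $\asub{\astr}{i}{n}$; I want to factor the index map $\ell \mapsto i + n\ell$ through multiplication. The decisive point is that $i + n\ell$ and $n(i' + \ell)$-type expressions, divided through by appropriate common factors and using $\astr(ab) = \astr(a) + \astr(b)$, reduce the question of constancy of $\asub{\astr}{i}{n}$ to showing $\astr$ is constant on $i + n\mathbb{N}$ for $i < n$; and this last fact is precisely what the recurrence of Lemma~\ref{lem:toeplitz:recurrence} gives \emph{because no such index is a multiple of $nm$} — so none of these positions is a gap position of the \emph{top-level} pattern, forcing $\astr(i + n\ell) = a_{(i + n\ell \bmod nm)}$, a letter of $\apat$ at a non-gap position. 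To finish, I must show all these letters agree: here I use that $\apat \in \CAP$ together with Lemma~\ref{lem:pat:composite:constant}'s intended use in the surrounding argument — equivalently, I suspect one shows directly via additivity that $\astr(i), \astr(i+n), \ldots, \astr(i+(m-1)n)$ are forced equal because $\astr(i + n\ell) - \astr(i)$ would have to be both a value of the form $\astr(\text{something})$ and bounded by the finite pattern, and additivity over the multiplicative structure rigidifies it. \textbf{The main obstacle} I anticipate is exactly this last step: proving that the finitely many pattern letters $a_j$ picked out along the progression $i + n\mathbb{N}$ coincide — the recurrence gives periodicity of $\asub{\astr}{i}{n}$ immediately, but upgrading "periodic" to "constant" requires genuinely invoking complete additivity (Lemma~\ref{lem:additive:periodic}-style reasoning applied to a suitable further subsequence, or an argument that a nonconstant periodic additive-subsequence pattern is incompatible with $\astr$ being nontrivial yet additive).
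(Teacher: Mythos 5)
You have correctly reduced the problem to its real content, but you have not solved it. Your analysis via Lemma~\ref{lem:toeplitz:recurrence} is right: by Lemma~\ref{lem:gap:end} the unique gap sits at position $nm \equiv 0 \pmod{n}$ while $i \not\equiv 0 \pmod{n}$, so the progression $i + n\ell$ never meets a gap position, and $\asub{\toeplitz{\apat}}{i}{n}$ cycles through the $m$ pattern letters $a_i, a_{i+n}, \dots, a_{i+(m-1)n}$; this gives $m$-periodicity only. You then explicitly flag ``upgrading periodic to constant'' as the main obstacle and leave it open, offering only the hope that additivity ``rigidifies'' things or that a Lemma~\ref{lem:additive:periodic}-style argument applies. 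The latter would be the wrong tool here: that lemma concludes the \emph{whole} sequence is trivial, whereas the present lemma must hold for non-trivial additive Toeplitz words. So as written the proposal is incomplete at exactly the step that carries the mathematical content.

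The missing idea is a one-line trick: multiply the progression by $m$ instead of examining it directly. Writing $\astr = \toeplitz{\apat}$, the positions $m(i+n\ell) = im + nm\ell$ all lie in the single residue class of $im$ modulo the pattern length $nm$, and $1 \le im < nm$ is not the gap position; hence Lemma~\ref{lem:toeplitz:recurrence} gives $\nth{\astr}{m(i+n\ell)} = a_{im}$ for all $\ell$, i.e.\ $\asub{\astr}{im}{nm}$ is constant. Complete additivity gives $\nth{\astr}{m(i+n\ell)} = \nth{\astr}{m} + \nth{\astr}{i+n\ell}$, whence $\nth{\astr}{i+n\ell} = a_{im} - \nth{\astr}{m}$ is independent of $\ell$. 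This is precisely the paper's proof, phrased there as $\asub{\astr}{im}{nm} = \asub{(\asub{\astr}{m}{m})}{i}{n} = \asub{\astr}{i}{n} + \nth{\astr}{m}$, and it is the step your proposal lacks.
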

\begin{proof}
  Let $\astr = \toeplitz{\apat}$. 
  By Lemma~\ref{lem:gap:end} we know that the (only) gap of $P$ is at the end. 
  Hence $a_i\in\aalph$ and from Lemma~\ref{lem:toeplitz:recurrence} it follows that 
  $\asub{\astr}{im}{nm} = \asub{(\asub{\astr}{m}{m})}{i}{n}$ is constant.
  Moreover $\asub{(\asub{\astr}{m}{m})}{i}{n} = \asub{(\astr + \nth{\astr}{m})}{i}{n} = \asub{\astr}{i}{n} + \nth{\astr}{m}$ 
  by additivity of $\astr$. Hence also $\asub{\astr}{i}{n}$ is constant.
\end{proof}

\begin{lemma}\label{lem:pat:composite:not:prime:power:constant:zero}
  Let $\apat\in\CAP$ with $\length{\apat}$ not a prime power.
  Then $\toeplitz{\apat}$ is the constant zero sequence.
\end{lemma}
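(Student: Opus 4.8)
The plan is to use Lemma~\ref{lem:pat:composite:constant} as the engine, and squeeze it from two different factorizations of $\length{\apat}$. Write $\ell = \length{\apat}$ and $\astr = \toeplitz{\apat}$. Since $\ell$ is not a prime power, it has at least two distinct prime divisors; pick two coprime factors $n_1, n_2 \geq 2$ with $\ell = n_1 m_1 = n_2 m_2$ and $\gcd{n_1}{n_2} = 1$ (for instance, if $p$ is the smallest prime divisor, take $n_1 = p^{\padval{p}{\ell}}$ and $n_2 = \ell/n_1$). Applying Lemma~\ref{lem:pat:composite:constant} to the factorization $\ell = n_1 m_1$ gives that $\asub{\astr}{i}{n_1}$ is constant for every $1 \le i < n_1$, and similarly $\asub{\astr}{i}{n_2}$ is constant for every $1 \le i < n_2$.

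Next I would extract, from the constancy of the residue-class subsequences, that $\astr$ itself is ultimately periodic — in fact that $\astr(k) = \astr(k')$ whenever $k \equiv k' \pmod{n_1}$ (for $k,k'$ large enough to avoid the single index that is $\equiv 0$), and likewise mod $n_2$. The first step only directly gives constancy on residue classes $i$ with $1 \le i < n_1$, i.e.\ it misses the class of multiples of $n_1$; but that class is handled because $\asub{\astr}{n_1}{n_1} = \div{\astr}{n_1} \similarx \astr$ by complete additivity (Definition~\ref{def:additive}), so $\astr$ restricted to multiples of $n_1$ is again a vertical shift of the whole sequence, and the recurrence $\astr(n_1 n) = \astr(n) + \astr(n_1)$ propagates periodicity there too. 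The cleanest route, though, is: constancy of $\asub{\astr}{i}{n_1}$ for $1 \le i < n_1$ already shows $\astr$ agrees with an $n_1$-periodic sequence off the multiples of $n_1$; being completely additive it cannot be genuinely ultimately periodic unless it is trivial, by Lemma~\ref{lem:additive:periodic}. So I would aim to conclude periodicity of $\astr$ outright and then invoke Lemma~\ref{lem:additive:periodic}.

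To make the periodicity argument airtight without the $n_1 \mid k$ gap, I would use both moduli at once. By CRT, the pair of residues $(k \bmod n_1, k \bmod n_2)$ determines $k \bmod n_1 n_2 = k \bmod \ell$. For $k$ with $1 \le (k \bmod n_1) < n_1$ we know $\astr(k)$ depends only on $k \bmod n_1$; for $k$ with $1 \le (k \bmod n_2) < n_2$ it depends only on $k \bmod n_2$. The only residues mod $\ell$ not covered by at least one of these are those that are $\equiv 0 \pmod{n_1}$ \emph{and} $\equiv 0 \pmod{n_2}$, i.e.\ $\equiv 0 \pmod{\ell}$; and there the value is $a_\ell$, handled by Lemma~\ref{lem:gap:end} / the recurrence for the gap position plus additivity (or, again, by noting $\div{\astr}{\ell}$ is both constant and a vertical shift of $\astr$, forcing triviality directly as in the proof of Lemma~\ref{lem:gap:end}). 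Putting these together, $\astr$ is periodic with period $\ell$, hence by Lemma~\ref{lem:additive:periodic} it is the constant zero sequence.

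The main obstacle I anticipate is bookkeeping at the "seam": reconciling the half-open ranges $1 \le i < n_j$ from Lemma~\ref{lem:pat:composite:constant} with the missing residue class of multiples, and making sure the CRT patching is consistent (i.e.\ that on overlaps the two descriptions of $\astr(k)$ agree — which they must, since both equal $\astr(k)$). Once one is careful that the only uncovered residue class mod $\ell$ is $0$, and that this last class is pinned down by additivity exactly as in Lemma~\ref{lem:gap:end}, the rest is immediate from Lemma~\ref{lem:additive:periodic}.
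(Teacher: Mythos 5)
Your opening moves coincide with the paper's: you apply Lemma~\ref{lem:pat:composite:constant} to two coprime factorizations of $\ell=\length{\apat}$ and use CRT to patch the residue classes together. The divergence --- and the genuine gap --- is in how you handle the one uncovered residue class, the multiples of $\ell$. You assert that ``there the value is $a_\ell$'' and, alternatively, that $\div{\astr}{\ell}$ is ``both constant and a vertical shift of $\astr$''. Neither holds: by Lemma~\ref{lem:gap:end} the last symbol of $\apat$ is a \emph{gap} $\rotgap{d}$ (assuming $\toeplitz{\apat}$ non-trivial, which is the only case at issue), so by the recurrence of Lemma~\ref{lem:toeplitz:recurrence} the value at position $\ell t$ is $\astr(t)+d$, which varies with $t$; and $\div{\astr}{\ell}$ is a vertical shift of $\astr$ but is constant only if $\astr$ already is --- the very conclusion you are after. (The constancy of $\div{\astr}{\ell}$ in the proof of Lemma~\ref{lem:gap:end} was derived from the hypothesis $a_\ell\in\aalph$, which fails here.) Consequently your claim that $\astr$ is periodic with period $\ell$ is not established: on the class $0\bmod\ell$ it would require $\astr(t+1)=\astr(t)$ for all $t$, which you do not have. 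The earlier remark that additivity ``propagates periodicity'' to the multiples of $n_1$ is circular for the same reason, and note that the uncovered indices form an infinite residue class, not ``a single index'', so they cannot be absorbed into a preperiod.

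The repair is exactly the paper's endgame, and it bypasses periodicity (and hence Lemma~\ref{lem:additive:periodic}) altogether. Since $\astr(1)=0$ and $1\nequiv 0\pmod{n_1}$, the constant value on the class of $1$ modulo $n_1$ is $0$; your CRT step then forces \emph{all} the constants to be $0$, so $\astr(k)=0$ for every $k\nequiv 0\pmod{\ell}$. In particular $\astr(\ell)=\astr(n_1)+\astr(n_2)=0$ since $n_1,n_2<\ell$, and additivity gives $\astr(\ell t)=\astr(\ell)+\astr(t)=\astr(t)$, so an easy induction (equivalently: the gap fixes $0$) yields $\astr(k)=0$ for all $k$. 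With that replacement for your treatment of the class $0\bmod\ell$, the argument closes.
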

\begin{proof}
  Let $\apat$ be non-trivial and $\length{\apat} = nm$ for some $n,m \gt 1$ with $\gcd{n}{m} = 1$
  and $\astr = \toeplitz{\apat}$.
  From Lemma~\ref{lem:pat:composite:constant} it follows that 
  all subsequences $\asub{\astr}{i}{n}$ and $\asub{\astr}{j}{m}$ are 
  constant for $1 \leq i < n$ and $1 \leq j < m$.
  Since $n$ and $m$ are relatively prime we have 
  that for every $j < m$ there is a $k \in \posnat$ 
  such that $kn + 1 \equiv j \pmod{m}$.
  Hence the subsequence $\asub{\astr}{1}{n}$ contains elements 
  of every subsequence $\asub{\astr}{j}{m}$ ($1 \leq j < m$),
  and thus $\asub{\astr}{1}{m} = \asub{\astr}{2}{m} = \ldots = \asub{\astr}{m-1}{m}$.
  Similarly we obtain $\asub{\astr}{1}{n} = \asub{\astr}{2}{n} = \ldots = \asub{\astr}{n-1}{n}$.
  Consequently we have $\nth{\astr}{k} = 0$ if $k \nequiv 0 \pmod{m}$ or $k \nequiv 0 \pmod{n}$.
  Hence $\nth{\astr}{k} = 0$ if $k \nequiv 0 \pmod{\lcm{n}{m}}$.
  From Lemma~\ref{lem:gap:end} we know that the only gap is at position $nm$,
  and since $\lcm{n}{m} = nm = \length{\apat}$ 
  we get $\nth{\apat}{k} = 0$ for all $1 \leq k < nm$.
  Finally the gap at $nm$ has to map $0$ to $0$ 
  because $\nth{\astr}{nm} = \nth{\astr}{n} + \nth{\astr}{m} = 0$.
\end{proof}

\begin{lemma}\label{lem:additive:pattern:decompose}
  Let $\apat = a_1 a_2 \ldots a_{r}\in\CAP$ 
  with $r = p^k$ for some prime~$p$ and~$k\ge2$,
  and $\toeplitz{\apat} \ne 0^\omega$. 
  Then 
  $\apat = \Tpow{\bpat}{k}$ 
  with $\bpat = a_1 a_2 \ldots a_{p-1} \rotgap{a_p} \in \CAP$.
  (Hence $\apat \toeplitzeq \bpat$ by Lemma~\ref{lem:pattern:composition}.)
\end{lemma}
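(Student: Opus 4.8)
The plan is to show that the one-gap additive pattern $\apat$ of length $p^k$ factors as the $k$-fold Toeplitz power of the length-$p$ pattern $\bpat = a_1 \ldots a_{p-1}\rotgap{a_p}$, where the crucial points are (a) that $\bpat$ itself lies in $\CAP$, and (b) that iterating $\bpat$ reproduces $\apat$ exactly. First I would record what we already know about the shape of $\apat$: by Lemma~\ref{lem:gap:end} the single gap of $\apat$ sits at the last position $r = p^k$, and that gap is a rotation $\rotgap{d}$ (we may take $d = a_r$, interpreting $a_r$ as the rotation $\rotgap{a_r}$ via the convention on non-surjective Toeplitz words). Now apply Lemma~\ref{lem:pat:composite:constant} with the factorization $r = p\cdot p^{k-1} = p^{k-1}\cdot p$: the arithmetic subsequences $\asub{\toeplitz{\apat}}{i}{p}$ are constant for $1\le i < p$. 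Since $a_i = \nth{\toeplitz{\apat}}{i}$ for $1\le i<p$ (the gap is at the end), and since $\asub{\toeplitz{\apat}}{i}{p}$ takes value $a_i$ at its first entry, every entry of $\apat$ at a position $\equiv i \pmod p$ with $i \not\equiv 0$ is forced to equal $a_i$. This already determines all non-gap letters of $\apat$ from the prefix $a_1\ldots a_{p-1}$: $\apat$ looks like $(a_1\ldots a_{p-1}\,?)^{p^{k-1}}$ with the $?$-slots (positions divisible by $p$) still to be pinned down, and the very last one being the gap $\rotgap{a_r}$.

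Next I would identify the letters in the positions divisible by $p$. Write any such position as $p\cdot t$ with $1\le t\le p^{k-1}$; by Lemma~\ref{lem:toeplitz:recurrence} (one-gap case), $\nth{\toeplitz{\apat}}{p^k n + p t}$ relates to $\nth{\toeplitz{\apat}}{p^{k-1}n + t}$ through the gap rotation, and combining this with complete additivity of $\toeplitz{\apat}$ — specifically $\nth{\toeplitz{\apat}}{p\cdot t} = \nth{\toeplitz{\apat}}{p} + \nth{\toeplitz{\apat}}{t} = a_p + \nth{\toeplitz{\apat}}{t}$ — shows that the letter of $\apat$ at position $pt$ equals $\rotgap{a_p}$ applied to the letter of $\toeplitz{\apat}$ at position $t$, i.e. exactly the value obtained by substituting the gap $\rotgap{a_p}$ with the sequence $\toeplitz{\apat}$ itself. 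This is precisely the statement that $\apat = \Tsubst{\bpat^{p^{k-1}}}{\,\toeplitz{\apat}\restriction_{[1,p^{k-1}]}\,}$, which unwinds (using $\toeplitz{\bpat} = \toeplitz{\apat}$, to be justified) to $\apat = \Tpow{\bpat}{k}$; the bookkeeping is just the recursion defining $\Tpow{\cdot}{k}$ together with Lemma~\ref{lem:Tcmp:length} to check the length is $p^k$ at each stage.

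To close the loop I would verify $\bpat \in \CAP$. That $\gapnr{\bpat} = 1$ is immediate. For additivity, the cleanest route is to show directly $\toeplitz{\bpat} = \toeplitz{\apat}$: since $\apat = \Tpow{\bpat}{k} = \bpat^{(k)}$ as patterns, Lemma~\ref{lem:pattern:composition} gives $\apat \toeplitzeq \bpat$, hence $\toeplitz{\bpat} = \toeplitz{\apat}$ is additive by hypothesis, so $\bpat\in\CAP$ and $\apat\toeplitzeq\bpat$ as claimed. The mild circularity here is only apparent: the identity $\apat = \Tpow{\bpat}{k}$ is established first as an identity of \emph{finite words} by the forcing argument of the previous two paragraphs, and only then do we invoke Lemma~\ref{lem:pattern:composition} to read off the equivalence.

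The main obstacle I expect is the second paragraph: pinning down the letters at positions divisible by $p$ and recognizing the resulting word as $\Tpow{\bpat}{k}$ rather than merely as "some length-$p^k$ pattern equivalent to $\bpat$". One must be careful that $a_p$ is a genuine alphabet letter while the gap slots carry the rotation $\rotgap{a_p}$, and that the recursive unfolding of $\Tsubst{\bpat^{p^{k-1}}}{-}$ matches the recursion in Definition~\ref{def:toeplitz} on the nose; this is where the hypothesis $k\ge 2$ is used (for $k=1$ the statement is vacuous, $\apat = \bpat$). Everything else is routine application of the recurrence equations and the length formulas already proved.
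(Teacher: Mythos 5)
Your plan follows the paper's proof almost step for step: Lemma~\ref{lem:gap:end} for the shape of $\apat$, Lemma~\ref{lem:pat:composite:constant} to force the letters at positions $j\nequiv 0\pmod p$ to equal $a_{j\bmod p}$, and additivity to pin down the positions divisible by $p$, after which the word is matched against the definition of $\Tpow{\bpat}{k}$. The one soft spot is your claim that the circularity is ``only apparent''. Your forcing argument expresses $\apat$ as $\Tsubst{\bpat^{p^{k-1}}}{w}$ where $w$ is the length\nb-$p^{k-1}$ prefix of $\toeplitz{\apat}$ (equivalently, of $\apat$ itself); to recognize this as $\Tpow{\bpat}{k}=\Tsubst{\bpat^{p^{k-1}}}{\Tpow{\bpat}{k-1}}$ you must identify $w$ with $\Tpow{\bpat}{k-1}$, and doing so via $\toeplitz{\bpat}=\toeplitz{\apat}$ is genuinely circular, since that equality is exactly what Lemma~\ref{lem:pattern:composition} hands you only \emph{after} $\apat=\Tpow{\bpat}{k}$ is known. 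The clean fix is a short induction on the exponent: show for $1\le j\le k$ that $\Tpow{\bpat}{j}=a_1\ldots a_{p^j-1}\,\rotgap{j\cdot a_p}$, the step using constancy of the $\asub{\toeplitz{\apat}}{i}{p}$ for positions $\nequiv 0\pmod p$, additivity ($a_{pt}=a_p+a_t$) for positions $pt$ with $t<p^{j-1}$, and composition of rotations for the final gap; for $j=k$ the gap of $\apat$ is indeed $\rotgap{k\cdot a_p}$ on the image of $\toeplitz{\apat}$ because $\toeplitz{\apat}(p^k n)=\toeplitz{\apat}(n)+k\cdot a_p$. This is what the paper's closing chain $\nth{\apat}{p^k}=a_p+a_{p^{k-1}}=\ldots=k\cdot a_p$ does implicitly. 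A minor further point: your appeal to Lemma~\ref{lem:toeplitz:recurrence} for positions $pt$ is slightly misstated (the gap recurrence only relates position $p^k(n+1)$ to $n+1$), but the fact you actually need there, $\nth{\apat}{pt}=a_p+\nth{\toeplitz{\apat}}{t}$, follows from additivity alone, so nothing breaks.
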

\begin{proof}
  With Lemma~\ref{lem:gap:end} we have that $a_1, \ldots, a_{p^k-1} \in \aalph$ and $a_{p^k} \in \symgroup{\aalph}$ ($*$).
  From Lemma~\ref{lem:pat:composite:constant} we know that 
  all subsequences $\asub{\astr}{i}{p}$ with $1 \le i \lt p$ are constant.
  Hence, using ($*$), $\apat$ and $\Tpow{\bpat}{k}$ coincide on all positions $j \nequiv 0 \pmod{p}$.
  Also, by $\toeplitz{\apat}$ being additive and the definition of pattern composition (Definition~\ref{def:toeplitz}), 
  we have 
  $\nth{\apat}{mp} = a_m + a_p = \nth{\Tpow{\bpat}{k}}{mp}$ for every $1 \le m \lt p$. 
  Finally, $\nth{\apat}{p^k} = a_p + a_{p^{k-1}} = \ldots = k \cdot a_p = \nth{\Tpow{\bpat}{k}}{p^k}$.
\end{proof}

So far we have shown that all one-gap patterns which generate non-trivial additive Toeplitz words
have the gap at the end (Lemma~\ref{lem:gap:end}), of prime power length 
(Lemma~\ref{lem:pat:composite:not:prime:power:constant:zero}), 
and can always be decomposed to a pattern of prime length (Lemma~\ref{lem:additive:pattern:decompose}).
Next we give the exact shape of these atomic patterns, 
which are defined using discrete logarithms.
\begin{definition}
  Let $g$ be a primitive root of some prime $p>2$. 
  We define words $\dlog{p,g} \in \wrd{\nalph{p-1}}$ and $\dlog{2,1} \in \wrd{\nalph{1}}$ by
  \begin{align*}
    \dlog{p,g} = 0 \, \log_g(2) \, \log_g(3) \, \ldots \, \log_g(p-1)
    &&
    \dlog{2,1} = 0 
  \end{align*} 
\end{definition}

We show that every atomic pattern in $\CAP$ is of the form $(\gtimes{\dlog{p,g}}{c})\rotgap{d}$ 
for some $c,d\in\aalph$.

\begin{theorem}\label{thm:dlog:additive:sound}
  Let $\apat\in\CAP$ with $\length{\apat} = p$ for some prime~$p$,
  $\toeplitz{\apat}$ be non-trivial, and $g$ a primitive root modulo~$p$.
  Then $\apat = (\gtimes{ \dlog{p,g} }{ \nth{\apat}{g} }) \,\rotgap{d}$ for some $d \in \aalph$.
\end{theorem}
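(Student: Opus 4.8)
The plan is to show that an atomic pattern $\apat \in \CAP$ of prime length $p$ is completely determined, up to the choices of the constant $c = \nth{\apat}{g}$ and the end-rotation $d$, by the discrete-logarithm word $\dlog{p,g}$. First I would set $\astr = \toeplitz{\apat}$. By Lemma~\ref{lem:gap:end} we know $a_1 = 0$ and the only gap is at position $p$, of the form $\rotgap{d}$; so it suffices to pin down the values $\nth{\apat}{i} = \nth{\astr}{i}$ for $1 \le i \le p-1$. The key observation is that, since $\astr$ is completely additive, the value $\nth{\astr}{i}$ for $i$ coprime to $p$ is governed by the multiplicative structure of $\mgroup{p}$: writing $i \equiv g^{e} \pmod p$ for the unique $e = \log_g(i) \in \nalph{p-1}$, I want to prove $\nth{\astr}{i} = \gtimes{e}{c}$ where $c = \nth{\astr}{g}$.

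The heart of the argument is a ``pumping'' step analogous to those in the proof of Theorem~\ref{thm:AS}. From Lemma~\ref{lem:toeplitz:recurrence} the single-gap recurrence reads $\nth{\astr}{pn + p} = \nth{\astr}{n} +_\aalph d$, and $\nth{\astr}{pn+i} = \nth{\astr}{i}$ for $1 \le i < p$, i.e.\ $\div{\astr}{p} = \astr + d$; combined with complete additivity ($\div{\astr}{p} = \astr + \nth{\astr}{p}$) one first recovers $\nth{\astr}{p} = d$ consistently (using $kd = \nth{\astr}{p^k}$ as in Lemma~\ref{lem:additive:pattern:decompose}). Now for $i, j \in \{1,\ldots,p-1\}$ with $ij \equiv \ell \pmod p$, $1 \le \ell \le p-1$, write $ij = pn + \ell$ for some $n \ge 0$. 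If $n \ge 1$ we are inside the range where $\div{\astr}{p} = \astr + d$ does not directly apply to $\nth{\astr}{pn+\ell}$ unless $\ell = p$, but we can instead argue: $\nth{\astr}{i} + \nth{\astr}{j} = \nth{\astr}{ij} = \nth{\astr}{pn + \ell}$, and then I would iterate the relation $\nth{\astr}{pm+\ell} = \nth{\astr}{\ell}$ for $1 \le \ell < p$ — which says precisely that the value at a position depends only on its residue mod $p$ for residues in $\{1,\ldots,p-1\}$, as long as the position is not itself a multiple of $p$ — wait, that needs care since $pn+\ell$ with $\ell<p$ indeed has residue $\ell$, so $\nth{\astr}{pn+\ell}=\nth{\astr}{\ell}$ holds directly. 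Hence $\nth{\astr}{i} + \nth{\astr}{j} = \nth{\astr}{\ell}$, i.e.\ the map $i \mapsto \nth{\astr}{i}$ is a homomorphism from $\mgroup{p}$ (under multiplication mod $p$) into $\aalph$. Since $g$ generates $\mgroup{p}$, this homomorphism is determined by the image $c = \nth{\astr}{g}$ of the generator: $\nth{\astr}{g^e} = \gtimes{e}{c}$, so $\nth{\apat}{i} = \gtimes{\log_g(i)}{c}$ for each $1 \le i \le p-1$, which is exactly $\apat = (\gtimes{\dlog{p,g}}{c}) \rotgap{d}$ (the leading $0$ of $\dlog{p,g}$ corresponds to $\log_g(1) = 0$ mapping to $0_\aalph$). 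One must also note that $\log_g(1) = p-1 \equiv 0$, and by Lemma~\ref{lem:discrete}\eqref{item:lem:discrete:exp:-1} together with $\nth{\astr}$ being a homomorphism, consistency is automatic. Finally, separately handle $p = 2$: then $\apat$ has length $2$ with $a_1 = 0$ and a rotation gap, giving $\apat = 0\,\rotgap{d} = (\gtimes{\dlog{2,1}}{c})\rotgap{d}$ trivially.

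The main obstacle I anticipate is making the ``residue-only dependence'' rigorous for all products $ij$: a priori $\nth{\astr}{pn+\ell} = \nth{\astr}{\ell}$ is part of the Toeplitz recurrence and is fine, but one must be careful that $ij$ is never a multiple of $p$ (true since $i,j < p$ and $p$ prime) so we never accidentally land on the gap position and have to invoke the rotation, and one must double-check the edge case $ij = \ell$ with $n=0$. A secondary subtlety is the non-surjective case — if $\astr(\posnat)$ is a proper subgroup of $\aalph$, the homomorphism $\mgroup{p} \to \aalph$ has image in that subgroup, and the stated convention (identifying the gap with any bijection agreeing on $\astr(\posnat)$) lets us still write $a_\ell = \rotgap{d}$; I would remark that $c = \nth{\apat}{g}$ then automatically lies in the relevant subgroup and the formula $\gtimes{\log_g(i)}{c}$ produces only elements of that subgroup, so everything is consistent. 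The rest is bookkeeping: unfolding the definition of $\gtimes{\dlog{p,g}}{c}$ letter by letter and matching it against $\nth{\apat}{i} = \gtimes{\log_g(i)}{c}$.
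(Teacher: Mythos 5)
Your proposal is correct and follows essentially the same route as the paper's (very terse) proof: Lemma~\ref{lem:gap:end} places the rotation gap at the end, and then the one-gap Toeplitz recurrence (positions depend only on their residue mod~$p$ off the multiples of~$p$) combined with complete additivity yields $\nth{\toeplitz{\apat}}{i} = \gtimes{\log_g(i)}{\nth{\apat}{g}}$. Your packaging of this as a group homomorphism $\mgroup{p}\to\aalph$ determined by its value on the generator~$g$ is just a slightly more explicit rendering of the paper's one-line computation $\nth{\toeplitz{\apat}}{g^{\nth{\dlog{p,g}}{i}}} = \nth{\dlog{p,g}}{i} \cdot \nth{\apat}{g}$.
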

\begin{proof}
  By Lemma~\ref{lem:gap:end} we know that the only gap is at the end and that it is a rotation.  
  Let $1 \le i \lt p$. 
  By Lemma~\ref{lem:toeplitz:recurrence} and additivity of $\toeplitz{\apat}$
  we obtain
  $\nth{\toeplitz{\apat}}{g^{\nth{\dlog{p,g}}{i}}} = \nth{\dlog{p,g}}{i} \cdot \nth{\apat}{g}$.
\end{proof}

\begin{example}
  Let $\aalph = \nalph{4}$, and $\apat = 002022\rotgap{3}$. 
  This gives rise to the following completely additive sequence%
  \footnote{%
    Additivity of $\toeplitz{\apat}$ can be checked with Lemma~\ref{lem:cap:gap}.%
  }%
  :
  \[
    \toeplitz{\apat} =
    002022 3\,  002022 3\, 002022 1\, 002022 3\, 002022 1\, 002022 1\, 002022 2\, \ldots \,
  \]
  Theorem~\ref{thm:dlog:additive:sound} gives
  \begin{align*}
    \apat & = (\gtimes{\dlog{7,3}}{\apat(3)})\,\rotgap{3} = (\gtimes{021453}{2})\,\rotgap{3} \\
    \apat &  = (\gtimes{\dlog{7,5}}{\apat(5)})\,\rotgap{3} = (\gtimes{045213}{2})\,\rotgap{3}
  \end{align*}
\end{example}
The question remains whether every pattern of the form $\apat = (\gtimes{\dlog{p,g}}{c}) \rotgap{d}$
gives rise to an additive sequence $\toeplitz{\apat}$.
The following example shows this is not the case.
Theorem~\ref{thm:dlog:additive:complete} formulates the exact requirement for $c$\,
so that $\toeplitz{\apat}$ is additive.
\begin{example}
  Let $\aalph = \nalph{4}$, and 
  $\bpat = (\gtimes{\dlog{7,3}}{3})\,\rotgap{3} = (\gtimes{021453}{3}) \,\rotgap{3} = 023031\,\rotgap{3}$.
  Then the sequence $\toeplitz{\bpat} = 023031 3\, 023031 1\, \ldots$ is not additive: 
  $\nth{\toeplitz{\bpat}}{8} \ne \nth{\toeplitz{\bpat}}{4} + \nth{\toeplitz{\bpat}}{2}$.
\end{example}

We note that an additive sequence over $\nalph{n}$ (with $n \in \posnat$) is surjective
if and only if it contains the element $1_{\nalph{n}}$.

\begin{lemma}\label{lem:add:ghom}
  Let $n, m \in \posnat$,
  $\sigma \in \nalph{n}^\omega$ an additive sequence,
  and
  $c \in \nalph{m}$.
  If $c$ is divisible by $\gscale{n}{m}$,
  then $\gtimes{\sigma}{c} \in \nalph{m}^\omega$ is an additive sequence.
  If, moreover, the sequence $\sigma$ is surjective, then the converse direction holds as well.
\end{lemma}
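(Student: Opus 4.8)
\textbf{Proof plan for Lemma~\ref{lem:add:ghom}.}
The plan is to reduce the statement to the homomorphism property of the map $\ghom{c}$ established in Lemma~\ref{lem:ghom}, and to obtain the converse by exploiting surjectivity of $\sigma$ to retract the multiplication-by-$c$ map. For the forward direction, assume $c = \gtimes{\gscale{n}{m}}{c'}$ for some $c'\in\nalph{m}$. By Lemma~\ref{lem:ghom} the map $\ghom{c}\funin\nalph{n}\to\nalph{m}$ is then a group homomorphism. Since $\sigma$ is additive we have $\nth{\sigma}{1} = 0_{\nalph{n}}$ and $\nth{\sigma}{k\ell} = \nth{\sigma}{k} +_{\nalph{n}} \nth{\sigma}{\ell}$ for all $k,\ell\in\posnat$. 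Applying $\ghom{c}$ and using that it is a homomorphism, we get $\nth{(\gtimes{\sigma}{c})}{1} = \gtimes{\nth{\sigma}{1}}{c} = \gtimes{0_{\nalph{n}}}{c} = 0_{\nalph{m}}$ and
\[
  \nth{(\gtimes{\sigma}{c})}{k\ell}
  = \gtimes{\nth{\sigma}{k\ell}}{c}
  = \gtimes{(\nth{\sigma}{k} +_{\nalph{n}} \nth{\sigma}{\ell})}{c}
  = \gtimes{\nth{\sigma}{k}}{c} +_{\nalph{m}} \gtimes{\nth{\sigma}{\ell}}{c}
  = \nth{(\gtimes{\sigma}{c})}{k} +_{\nalph{m}} \nth{(\gtimes{\sigma}{c})}{\ell},
\]
so $\gtimes{\sigma}{c}$ is additive.

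For the converse, assume additionally that $\sigma$ is surjective and that $\tau \defdby \gtimes{\sigma}{c}$ is additive; we must show $\gscale{n}{m}$ divides $c$. The idea is that, since $\sigma$ hits every element of $\nalph{n}$, the relation $\tau(k) = \gtimes{\sigma(k)}{c}$ forces the set $\gtimes{\nalph{n}}{c} = \{\,\gtimes{j}{c} \where j\in\nalph{n}\,\}$ to be exactly the set of values taken by an additive sequence, hence a subgroup of $\nalph{m}$; in particular it is closed under the group operation. Concretely: by surjectivity of $\sigma$ pick $k_1,k_2$ with $\sigma(k_1) = 1_{\nalph{n}}$ and $\sigma(k_2) = n-1 = -1_{\nalph{n}}$, so that $\sigma(k_1 k_2) = 0_{\nalph{n}}$; additivity of $\tau$ then gives $\gtimes{n}{c} = \gtimes{(1_{\nalph{n}})}{c} +_{\nalph{m}} \gtimes{(-1_{\nalph{n}})}{c} = \tau(k_1) +_{\nalph{m}} \tau(k_2) = \tau(k_1 k_2) = \gtimes{0}{c} = 0_{\nalph{m}}$. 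Thus $\gtimes{n}{c} \equiv 0 \pmod m$, i.e.\ $m \mid nc$, which is equivalent to $\frac{m}{\gcd{n}{m}} \mid c$, that is, $\gscale{n}{m}$ divides $c$. (One also records, as in Lemma~\ref{lem:ghom}, that $c' \defdby c / \gscale{n}{m}$ lies in $\nalph{m}$, which is immediate since $0 \le c < m$.)

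The main obstacle is the converse direction: one has to be careful that additivity of $\tau$ alone does not a priori say anything about $c$ — it is only the combination with surjectivity of $\sigma$ that pins down $\gtimes{n}{c}$. The key observation making it work is that complete additivity supplies, for any target value $t$ reachable by $\sigma$, an index mapping to $-t$ (namely multiply by an index realizing $\sigma = -1_{\nalph{n}}$, which exists by surjectivity), so that $0$ is reachable with its $\tau$-image forced to be both $0_{\nalph{m}}$ and $\gtimes{n}{c}$ simultaneously. After that, the step $m \mid nc \iff \gscale{n}{m}\mid c$ is the elementary number-theoretic identity $\gscale{n}{m} = m/\gcd{n}{m}$ already in use in the paper, so no further difficulty arises. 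I would keep the write-up short, citing Lemma~\ref{lem:ghom} for the forward direction and spelling out only the surjectivity trick for the converse.
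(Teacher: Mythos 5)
Your proof is correct and follows essentially the same route as the paper's: the forward direction applies the homomorphism $\ghom{c}$ from Lemma~\ref{lem:ghom} pointwise to the additivity identity, and the converse uses surjectivity to pick indices with $\sigma$-values $1_{\nalph{n}}$ and $n-1$, forcing $\gtimes{n}{c} = 0_{\nalph{m}}$ and hence $\gscale{n}{m} \mid c$. No substantive differences.
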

\begin{proof}
  For all $i,j\in\nat$ we have 
  $\nth{(\gtimes{\sigma}{c})}{ij}
   = \gtimes{\nth{\sigma}{ij}}{c}
   = \gtimes{(\nth{\sigma}{i}+\nth{\sigma}{j})}{c}
   = \gtimes{\nth{\sigma}{i}}{c} + \gtimes{\nth{\sigma}{j})}{c}
   = \nth{(\gtimes{\sigma}{c})}{i} + \nth{(\gtimes{\sigma}{c})}{j}$
  since $\sigma$ is additive and by Lemma~\ref{lem:ghom}.
  
  For the converse direction, assume that the sequence $\sigma$ is surjective 
  and $\gtimes{\sigma}{c}$ additive.
  There exist $i,j \in \nat$ such that 
  $\nth{\sigma}{i} = 1$ and $\nth{\sigma}{j} = n-1$.
  Then $\nth{\sigma}{ij} = \nth{\sigma}{i} + \nth{\sigma}{j} = 0$.
  As a consequence $\gtimes{\nth{\sigma}{ij}}{c} = \gtimes{\nth{\sigma}{i}}{c} + \gtimes{\nth{\sigma}{j}}{c} = 0$.
  Then $\gtimes{(i +_\nat j)}{c} = \gtimes{n}{c} = 0$.
  Hence $n \cdot_\nat c \equiv 0 \pmod{m}$, and
  thus $c$ must be a multiple of $\gscale{n}{m} = \frac{m}{\gcd{n}{m}}$.
\end{proof}

\begin{lemma}\label{lem:cap:gap}
  Let $w \in \aalph^*$ such that $p = \length{w}+1$ is prime, and $d\in\aalph$.
  Then $w \,\rotgap{d} \in \CAP$ if and only if
  $\nth{w}{k} = \nth{w}{i} + \nth{w}{j}$
  for all $i,j,k$ with $0 < i,j,k < p$ such that $k \equiv i\cdot j \pmod{p}$.
\end{lemma}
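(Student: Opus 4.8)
The plan is to put $\astr = \toeplitz{w\rotgap{d}}$ and read everything off the recurrence of Lemma~\ref{lem:toeplitz:recurrence}. First I would note the harmless reduction: since $w \in \aalph^{*}$, the pattern $w\rotgap{d}$ starts with a letter of $\aalph$ and has exactly one gap, at position $p = \length{w}+1$; so $\astr \in \strp{\aalph}$ and $w\rotgap{d} \in \CAP$ holds precisely when $\astr$ is completely additive in the sense of Definition~\ref{def:additive}. Writing $\nth{w}{i} \in \aalph$ for the letter at position $i$ ($1 \le i < p$) and $\rotgap{d}$ for the letter at position $p$, Lemma~\ref{lem:toeplitz:recurrence} yields (the gap clause after substituting $m = n+1$) the two facts
\begin{align*}
  \text{(A)}\quad \astr(pm) = \astr(m) + d \quad (m \in \posnat),
  &&
  \text{(B)}\quad \astr(t) = \nth{w}{t \bmod p} \quad (t \in \posnat,\ p \nmid t).
\end{align*}
Iterating (A) and then applying (B) gives
\begin{align*}
  \text{(C)}\quad \astr(p^{s} t) = \gtimes{s}{d} + \nth{w}{t \bmod p} \qquad (s \in \nat,\ t \in \posnat,\ p \nmid t).
\end{align*}

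For the implication ``$\Leftarrow$'' I would assume the congruence condition on $w$. Taking $i = j = k = 1$ gives $\nth{w}{1} = \nth{w}{1} + \nth{w}{1}$, hence $\astr(1) = \nth{w}{1} = 0$. For arbitrary $n, m \in \posnat$, write $n = p^{s} t$ and $m = p^{s'} t'$ with $p \nmid t, t'$; since $p$ is prime, $p \nmid t t'$ and $nm = p^{s+s'}(t t')$. Put $i = t \bmod p$, $j = t' \bmod p$ (both in $\{1,\dots,p-1\}$) and $k = (t t') \bmod p = (ij) \bmod p$, which is again in $\{1,\dots,p-1\}$ because $p$ is prime; then $k \equiv ij \pmod{p}$, so the hypothesis gives $\nth{w}{k} = \nth{w}{i} + \nth{w}{j}$. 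Feeding this into (C) for $n$, $m$ and $nm$ gives $\astr(nm) = \gtimes{(s+s')}{d} + \nth{w}{k} = \astr(n) + \astr(m)$, so $\astr$ is completely additive and $w\rotgap{d} \in \CAP$.

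For ``$\Rightarrow$'' I would assume $w\rotgap{d} \in \CAP$, i.e.\ $\astr$ completely additive, and fix $0 < i, j, k < p$ with $k \equiv ij \pmod{p}$. Since $i$, $j$ lie strictly between $0$ and $p$, (B) gives $\astr(i) = \nth{w}{i}$ and $\astr(j) = \nth{w}{j}$; and since $p \nmid ij$ (using primality) with $ij \bmod p = k$, (B) also gives $\astr(ij) = \nth{w}{k}$. Complete additivity then forces $\nth{w}{k} = \astr(ij) = \astr(i) + \astr(j) = \nth{w}{i} + \nth{w}{j}$, as required. The computation is essentially bookkeeping; the only delicate point is the index shift in Lemma~\ref{lem:toeplitz:recurrence} that turns its gap clause into (A), and primality of $p$ is genuinely used exactly twice: to keep $\{1,\dots,p-1\}$ closed under multiplication modulo $p$, and to let the $p$-parts of $n$ and $m$ add cleanly. (The degenerate subcase $w = 0^{p-1}$ needs no separate treatment: there (C) reads $\astr(p^{s} t) = \gtimes{s}{d}$, the $p$-adic valuation scaled by $d$, which is additive, and the congruence condition holds trivially.)
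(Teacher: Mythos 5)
Your proof is correct and follows essentially the same route as the paper: both directions are read off the recurrence of Lemma~\ref{lem:toeplitz:recurrence}, with the forward direction identical and the backward direction differing only cosmetically (you unroll the paper's induction on the exponent of $p$ into the closed form $\astr(p^{s}t)=\gtimes{s}{d}+\nth{w}{t\bmod p}$, whereas the paper splits into the cases $p\nmid nm$ and $p\mid nm$). The bookkeeping, including the index shift in the gap clause and the observation that $\astr(1)=0$ comes for free from $i=j=k=1$, is sound.
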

\begin{proof}
  For the direction `$\Rightarrow$', let
  $\apat = w \,\rotgap{d} \in \CAP$.
  Let $0 < i,j,k < p$ such that $k \equiv i\cdot j \pmod{p}$.
  Then
  $\nth{w}{k} = \nth{\toeplitz{\apat}}{k} = \nth{\toeplitz{\apat}}{ij}$ 
  by Lemma~\ref{lem:toeplitz:recurrence} since $k \equiv i\cdot j \pmod{p}$.
  Moreover, we have 
  $\nth{\toeplitz{\apat}}{ij} 
   = \nth{\toeplitz{\apat}}{i} + \nth{\toeplitz{\apat}}{j} 
   = \nth{w}{i} + \nth{w}{j}$. 
  Hence $\nth{w}{k} = \nth{w}{i} + \nth{w}{j}$. 
   
  For the direction `$\Leftarrow$', let $\apat = w \,\rotgap{d}$,
  and assume
  $\nth{w}{k} = \nth{w}{i} + \nth{w}{j}$
  for all $0 < i,j,k < p$ such that $k \equiv i\cdot j \pmod{p}$.
  We show additivity of $\astr = \toeplitz{\apat}$.
  Let $n,m \in \posnat$, we distinguish two cases:
  \begin{enumerate}
    \item Case: $p \nmid nm$.
      There exist $n',m',k'\in\nalph{p-1}$ such that 
      $n' \equiv n \pmod{p}$, $m' \equiv m \pmod{p}$ and $k' \equiv nm \pmod{p}$.
      We have:
      \begin{align*}
        \nth{\astr}{nm} 
        & = \nth{\astr}{k'} & \text{by Lemma~\ref{lem:toeplitz:recurrence} and $k' \equiv nm \pmod{p}$} \\
        & = \nth{w}{k'} \\
        & = \nth{w}{n'} + \nth{w}{m'} \\
        & = \nth{\astr}{n'} + \nth{\astr}{m'} \\
        & = \nth{\astr}{n} + \nth{\astr}{m} & \text{by Lemma~\ref{lem:toeplitz:recurrence}}
      \end{align*}
    \item Case: $p \mid n$ or $p \mid m$.
      We use induction on the exponent of the prime factor $p$ in $nm$.
      For symmetry assume $p \mid n$ (the other case follows analogously). We have:
      \begin{align*}
        \nth{\astr}{nm} 
        & = \nth{\astr}{\frac{n}{p}m} + d & \text{by Lemma~\ref{lem:toeplitz:recurrence}} \\
        & = \nth{\astr}{\frac{n}{p}} + \nth{\astr}{m} + d& \text{by induction hypothesis} \\
        & = \nth{\astr}{n} + \nth{\astr}{m} & \text{by Lemma~\ref{lem:toeplitz:recurrence}}
      \end{align*}
  \end{enumerate}
  This concludes the proof.
\end{proof}

\begin{lemma}\label{lem:dlog:cap}
  The Toeplitz word $\toeplitz{\dlog{p,g} \,\rotgap{d}}$ is additive
  for every prime $p$, primitive root $g$ modulo $p$ and $d\in\nalph{p-1}$.
\end{lemma}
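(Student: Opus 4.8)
The plan is to apply Lemma~\ref{lem:cap:gap} with the cyclic group $\aalph$ taken to be $\nalph{p-1}$ and $w = \dlog{p,g}$. First I would check that this instantiation is legitimate: $\dlog{p,g}$ is a word over $\nalph{p-1}$ of length $p-1$, so $\length{w}+1 = p$ is prime as the lemma requires, and for $d\in\nalph{p-1}$ the map $\rotgap{d}$ is a genuine permutation of $\nalph{p-1}$. It then remains to verify the arithmetic hypothesis of Lemma~\ref{lem:cap:gap}: that $\nth{w}{k} = \nth{w}{i} + \nth{w}{j}$ (with $+$ addition modulo $p-1$) for all $0 < i,j,k < p$ with $k \equiv i\cdot j \pmod{p}$. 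Once this is done, the lemma yields $w\,\rotgap{d}\in\CAP$, which is exactly the assertion that $\toeplitz{\dlog{p,g}\,\rotgap{d}}$ is additive.

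The heart of the argument is the observation that this hypothesis is nothing but the homomorphism property of the discrete logarithm. By definition $\nth{w}{i} = \log_g(i)$ for $1 \le i < p$ (reading $\log_g(1) = 0$). Given $0 < i,j < p$ with $p$ prime, both $i$ and $j$ are units modulo $p$, so $k \equiv ij \pmod p$ is again a nonzero residue and automatically satisfies $0 < k < p$. From $g^{\log_g(i)} \equiv i$ and $g^{\log_g(j)} \equiv j \pmod p$ I obtain $g^{\log_g(i) + \log_g(j)} \equiv ij \equiv k \equiv g^{\log_g(k)} \pmod p$, and since discrete exponentiation $e \mapsto g^e$ is a bijection on $\mgroup{p}$ (exponents being taken modulo $p-1$; see Section~\ref{sec:prelims}), this forces $\log_g(i) + \log_g(j) \equiv \log_g(k) \pmod{p-1}$. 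As addition in $\nalph{p-1}$ is addition modulo $p-1$, this is precisely $\nth{w}{i} + \nth{w}{j} = \nth{w}{k}$, completing the verification.

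Finally I would dispose of the base case $p = 2$ separately: here $\dlog{2,1} = 0$ is a single-letter word over $\nalph{1}$ and $d$ must equal $0$, so $\length{\dlog{2,1}}+1 = 2$ is prime and the hypothesis of Lemma~\ref{lem:cap:gap} holds vacuously (the only admissible triple is $i = j = k = 1$, for which $0 = 0 + 0$); thus $\dlog{2,1}\,\rotgap{0}\in\CAP$ as well, with $\toeplitz{\dlog{2,1}\,\rotgap{0}} = 0^\omega$ trivially additive.

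I do not expect any genuine obstacle: all the combinatorial work on Toeplitz words has already been absorbed into Lemma~\ref{lem:cap:gap}, and what is left is the standard fact that $\log_g$ is a group isomorphism from $\mgroup{p}$ onto $\triple{\nalph{p-1}}{{+}}{0}$. The only points that need a little attention are correctly identifying the ambient group as $\nalph{p-1}$ rather than $\nalph{p}$ (since the letters of the pattern are discrete logarithms), making sure the index range $0 < k < p$ is respected, and handling the degenerate prime $p = 2$.
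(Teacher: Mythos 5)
Your proposal is correct and follows essentially the same route as the paper: both reduce the claim to Lemma~\ref{lem:cap:gap} and then observe that the required identity $\nth{w}{k} = \nth{w}{i} + \nth{w}{j}$ for $k \equiv ij \pmod p$ is exactly the homomorphism property of the discrete logarithm from $\mgroup{p}$ to $\nalph{p-1}$. You merely spell out the details (the bijectivity of $e \mapsto g^e$, the ambient group $\nalph{p-1}$, and the degenerate case $p=2$) that the paper leaves implicit.
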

\begin{proof}
  Let $p$ be a prime, $g$ a primitive root modulo $p$, $d\in\aalph$,
  and $\apat = \dlog{p,g} \,\rotgap{d}$.
  For $\apat \in \CAP$ it suffices to check
  $\nth{\dlog{p,g}}{k} = \nth{\dlog{p,g}}{i} + \nth{\dlog{p,g}}{j}$
  for all $0 < i,j,k < p$ such that $k \equiv i\cdot j \pmod{p}$ 
  by Lemma~\ref{lem:cap:gap}.
  This property follows from $\dlog{p,g}$ being the discrete logarithm modulo $p$.
\end{proof}

\begin{theorem}\label{thm:dlog:additive:complete} %
  Let $p$ be prime, $g$ a primitive root modulo $p$, and $c, d\in\aalph$.
  Then we have that the pattern
  $(\gtimes{\dlog{p,g}}{c}) \,\rotgap{d} \in \CAP$
  if and only if
  $\gscale{p-1}{\card{\aalph}}$ divides $c$.
\end{theorem}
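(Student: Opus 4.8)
The plan is to reduce the equivalence, by Lemma~\ref{lem:cap:gap}, to the question of when $\ghom{c}$ is a group homomorphism, and then to settle the two directions using Lemma~\ref{lem:ghom} together with an elementary divisibility computation. Throughout I write $m = \card{\aalph}$ and $w = \gtimes{\dlog{p,g}}{c}$, so that $\length{w} = p-1$ and the pattern in question is $w\,\rotgap{d}$. Since $w\in\aalph^{*}$ and $\length{w}+1 = p$ is prime, Lemma~\ref{lem:cap:gap} applies and says that $w\,\rotgap{d}\in\CAP$ if and only if $\nth{w}{k} = \nth{w}{i} + \nth{w}{j}$ for all $i,j,k$ with $0 < i,j,k < p$ and $k \equiv ij \pmod{p}$. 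Note that this condition never mentions $d$, in accordance with the statement: additivity of $\toeplitz{w\,\rotgap{d}}$ does not depend on the rotation sitting at the gap.

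The next step is to rewrite this condition in terms of the discrete logarithm. For $1\le i<p$ we have $\nth{w}{i} = \gtimes{\nth{\dlog{p,g}}{i}}{c} = \gtimes{\log_{g}(i)}{c}$ (using $\log_{g}(1) = 0$), and $i\mapsto\log_{g}(i)$ is a bijection from $\{1,\ldots,p-1\}$ onto $\nalph{p-1}$. Since the discrete logarithm carries multiplication modulo $p$ to addition modulo $p-1$, the assignment $(i,j,k)\mapsto(\log_{g}i,\log_{g}j,\log_{g}k)$ identifies the triples $(i,j,k)$ with $0<i,j,k<p$ and $k\equiv ij\pmod{p}$ with exactly the triples $(a,b,\,a +_{\nalph{p-1}} b)$ where $a,b\in\nalph{p-1}$. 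Hence the condition of Lemma~\ref{lem:cap:gap} is equivalent to ``$\gtimes{(a +_{\nalph{p-1}} b)}{c} = \gtimes{a}{c} + \gtimes{b}{c}$ for all $a,b\in\nalph{p-1}$'', and since also $\gtimes{0}{c} = 0$, this is precisely the assertion that $\ghom{c}$ is a group homomorphism from $\triple{\nalph{p-1}}{{+}}{0}$ to $\triple{\aalph}{{+}}{0}$. So it remains to prove: \emph{$\ghom{c}$ is a group homomorphism from $\nalph{p-1}$ to $\aalph$ if and only if $\gscale{p-1}{m}$ divides $c$.}

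The direction ``$\Leftarrow$'' is then exactly Lemma~\ref{lem:ghom}, applied with $n = p-1$. For ``$\Rightarrow$'', assume $\ghom{c}$ is a homomorphism; I may also assume $p$ is odd, the case $p=2$ being degenerate, since there $\dlog{2,1}=0$ and $\gtimes{\dlog{2,1}}{c}$ equals the single letter $0_{\aalph}$ regardless of $c$. As $\ghom{c}$ is additive and sends $1_{\nalph{p-1}}$ to $\gtimes{1}{c} = c$, it sends the $(p-1)$-fold sum $\gtimes{(p-1)}{1_{\nalph{p-1}}}$ to the $(p-1)$-fold sum $\gtimes{(p-1)}{c}$. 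But $\gtimes{(p-1)}{1_{\nalph{p-1}}} = 0_{\nalph{p-1}}$ and a homomorphism sends $0_{\nalph{p-1}}$ to $0_{\aalph}$, so $\gtimes{(p-1)}{c} = 0_{\aalph}$, i.e.\ $m \mid (p-1)c$. Dividing this divisibility by the common factor $\gcd{p-1}{m}$ gives $\frac{m}{\gcd{p-1}{m}} \mid c$, that is $\gscale{p-1}{m}\mid c$, as required.

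I expect the only genuinely delicate point to be the identification of triples in the second paragraph: one must check that the condition ``$0<i,j,k<p$ and $k\equiv ij\pmod{p}$'' sweeps out precisely all pairs $(a,b)\in\nalph{p-1}^{2}$, together with their forced sum as the third coordinate, and no spurious ones, so that the combinatorial identity of Lemma~\ref{lem:cap:gap} is genuinely equivalent to---and not merely implied by---the homomorphism property of $\ghom{c}$. This is where surjectivity of $i\mapsto\log_{g}(i)$ and the defining shape $\dlog{p,g} = 0\,\log_{g}(2)\,\ldots\,\log_{g}(p-1)$ enter; everything else is a direct appeal to Lemma~\ref{lem:ghom} or routine modular arithmetic.
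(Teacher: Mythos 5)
Your proof is correct, but it takes a genuinely different route from the paper's. The paper uses Lemma~\ref{lem:cap:gap} only to discard the dependence on the gap rotation, and otherwise works at the level of infinite sequences: it factors the Toeplitz word as $\toeplitz{(\gtimes{\dlog{p,g}}{c})\,\rotgap{\gtimes{d}{c}}} = \gtimes{\toeplitz{\dlog{p,g}\,\rotgap{d}}}{c}$, invokes Lemma~\ref{lem:dlog:cap} for additivity of the ``universal'' word $\toeplitz{\dlog{p,g}\,\rotgap{d}}$ over $\nalph{p-1}$, and then applies both directions of Lemma~\ref{lem:add:ghom}, the converse via surjectivity of that word. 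You instead stay entirely at the level of the finite pattern: Lemma~\ref{lem:cap:gap} reduces membership in $\CAP$ to a finite identity on $w=\gtimes{\dlog{p,g}}{c}$, the discrete logarithm turns that identity into the assertion that $\ghom{c}$ is a group homomorphism from $\nalph{p-1}$ to $\aalph$, and the divisibility criterion for that follows from Lemma~\ref{lem:ghom} together with the elementary implication from $\card{\aalph}\mid(p-1)c$ to $\gscale{p-1}{\card{\aalph}}\mid c$. Your identification of the admissible triples $(i,j,k)$ with pairs $(a,b)\in\nalph{p-1}^{2}$ is sound, since for each $(i,j)$ there is exactly one $k$ with $0<k<p$ and $k\equiv ij\pmod{p}$, and $\log_g$ is a bijection carrying multiplication modulo $p$ to addition modulo $p-1$. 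What your route buys is self-containedness --- you need neither the commutation identity the paper labels $(*)$ and asserts without proof, nor Lemma~\ref{lem:dlog:cap}, nor Lemma~\ref{lem:add:ghom} --- at the cost of re-deriving inline, via the discrete-log translation, essentially the content of Lemma~\ref{lem:dlog:cap}. One shared wrinkle: for $p=2$ the ``only if'' direction is genuinely problematic, since the pattern $0\,\rotgap{d}$ does not depend on $c$ and is always in $\CAP$, whereas $\gscale{1}{\card{\aalph}}=\card{\aalph}$ divides $c$ only for $c=0$; you flag this case as degenerate, and the paper's argument does not really handle it either, so this is a defect of the statement rather than of your proof.
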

\begin{proof}
  First, we observe that $(*)$
  $\gtimes{\toeplitz{\dlog{p,g} \,\rotgap{d}}}{c} = \toeplitz{\gtimes{\dlog{p,g}}{c} \,\rotgap{\gtimes{d}{c}}}$,
  and by Lemma~\ref{lem:cap:gap} we have
  $(\gtimes{\dlog{p,g}}{c}) \,\rotgap{\gtimes{d}{c}} \in \CAP$
  if and only if
  $(\gtimes{\dlog{p,g}}{c}) \,\rotgap{d} \in \CAP$.
  For the direction `$\Leftarrow$', let 
  $c\in\aalph$ such that $\gscale{p-1}{\card{\aalph}}$ divides $c$.
  Then $\gtimes{\toeplitz{\dlog{p,g} \,\rotgap{d}}}{c}$
  is additive by Lemmas~\ref{lem:dlog:cap} and~\ref{lem:add:ghom},
  which implies the claim by $(*)$.
  For the direction `$\Rightarrow$', let $(\gtimes{\dlog{p,g}}{c}) \,\rotgap{d} \in \CAP$.
  By $(*)$ we obtain that $\gtimes{\toeplitz{\dlog{p,g} \,\rotgap{d}}}{c}$ is additive,
  and by Lemma~\ref{lem:cap:gap} together with surjectivity of $\toeplitz{\dlog{p,g} \,\rotgap{d}}$
  it follows that $\gscale{p-1}{\card{\aalph}}$ divides $c \in \aalph$.
\end{proof}

Theorem~\ref{thm:superthm} summarizes the results of this section.
\begin{theorem}\label{thm:superthm}
  For every $p\in\primes$, let $g(p)$ denote a primitive root modulo~$p$.  
  Then we have
  \begin{align*}
    \CAP 
      \stackrel{\text{\makebox(10,5){def}}}{=}
      & \displset{\apat\in\wrd{(\galph{\aalph})}}{\text{$\toeplitz{\apat}$ is additive and $\gapnr{\apat} = 1$}}
      \\
      \stackrel{\text{\makebox(10,0){}}}{=}
      &
        \big\{
           \,\Tpow{((\gtimes{\dlog{p,g(p)}}{c}) \,\rotgap{d})}{k} 
           \, \big\vert \,
            \text{$p\in\primes$, $k \ge 1$, $c,d\in\aalph$, $\gscale{p-1}{\card{\aalph}} \mid c$} \,
        \big\} \\
      & \cup \big\{\,0^{+} \, \gap \, 0^{*}\,\big\}
  \end{align*}
\end{theorem}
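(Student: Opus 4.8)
The plan is to assemble the theorem from the lemmas already established in this section, so that essentially no new calculation is needed. The statement asserts an equality of two sets; I would prove the two inclusions separately after first disposing of the trivial patterns. A one-gap pattern~$\apat$ with $\toeplitz{\apat} = 0^\omega$ is, by Lemma~\ref{lem:gap:end}, of the form $0^{+}\,\rotgap{d}\,0^{*}$ where the terminal rotation must fix~$0$ (as in the last line of the proof of Lemma~\ref{lem:pat:composite:not:prime:power:constant:zero}); under our convention identifying gaps with bijections agreeing on the image, $\rotgap{d}$ on the singleton image $\{0\}$ is just $\gap$, so these patterns are exactly $0^{+}\,\gap\,0^{*}$. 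Conversely every such pattern lies in $\CAP$ since it generates $0^\omega$. This justifies the second set on the right-hand side; for the remainder I may assume $\toeplitz{\apat}$ is non-trivial.

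For the inclusion~($\subseteq$), take $\apat \in \CAP$ with $\toeplitz{\apat}$ non-trivial. By Lemma~\ref{lem:pat:composite:not:prime:power:constant:zero}, $\length{\apat}$ is a prime power~$p^k$; by Lemma~\ref{lem:gap:end} the (unique) gap sits at the end and is a rotation $\rotgap{d}$. If $k \ge 2$, Lemma~\ref{lem:additive:pattern:decompose} gives $\apat = \Tpow{\bpat}{k}$ for an atomic $\bpat = a_1 \cdots a_{p-1}\,\rotgap{a_p} \in \CAP$ of prime length~$p$, so it suffices to treat the case $k = 1$, i.e.\ $\length{\apat} = p$. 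If $p = 2$ then $\apat = 0\,\rotgap{d}$, which matches the right-hand side with $\dlog{2,1} = 0$, $g(2)$ the (trivial) root, $c$ arbitrary. If $p > 2$, fix the primitive root $g = g(p)$; Theorem~\ref{thm:dlog:additive:sound} yields $\apat = (\gtimes{\dlog{p,g}}{\apat(g)})\,\rotgap{d}$, and Theorem~\ref{thm:dlog:additive:complete} (the `$\Rightarrow$' direction) forces $\gscale{p-1}{\card{\aalph}} \mid \apat(g)$. Setting $c = \apat(g)$ exhibits $\apat$ as $\Tpow{((\gtimes{\dlog{p,g(p)}}{c})\,\rotgap{d})}{1}$ in the right-hand set.

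For the inclusion~($\supseteq$), let $\apat = \Tpow{((\gtimes{\dlog{p,g(p)}}{c})\,\rotgap{d})}{k}$ with $p$ prime, $k \ge 1$, $c,d \in \aalph$ and $\gscale{p-1}{\card{\aalph}} \mid c$. By the `$\Leftarrow$' direction of Theorem~\ref{thm:dlog:additive:complete}, the base pattern $(\gtimes{\dlog{p,g(p)}}{c})\,\rotgap{d}$ lies in $\CAP$; since $\gapnr{} = 1$ is preserved and $\stoeplitzeq$-classes are closed under composition (Lemma~\ref{lem:pattern:composition}), the $k$-fold power $\Tpow{\cdot}{k}$ is again a one-gap pattern $\stoeplitzeq$-equivalent to it, hence in $\CAP$. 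Together with the trivial patterns handled above, this establishes both inclusions.

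The only step requiring care—and the place where the heavy lifting has already been done—is the reduction chain in~($\subseteq$): namely that \emph{every} additive one-gap pattern is forced, through Lemmas~\ref{lem:pat:composite:not:prime:power:constant:zero}, \ref{lem:additive:pattern:decompose} and Theorems~\ref{thm:dlog:additive:sound}--\ref{thm:dlog:additive:complete}, into exactly the displayed normal form. Since all four of those are available, the proof of Theorem~\ref{thm:superthm} itself is essentially a bookkeeping argument: match up the two cases $p = 2$ and $p > 2$, fold in the prime-power decomposition, and separate out the degenerate patterns generating $0^\omega$. I expect the subtlest point to be the bookkeeping around the non-surjective case and the convention identifying a terminal rotation with $\gap$ when the generated word has a one-element image, which is what makes the last set $\{0^{+}\,\gap\,0^{*}\}$ (rather than a family indexed by $d$) the correct description of the trivial patterns.
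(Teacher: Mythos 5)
Your proposal is correct and follows exactly the route the paper intends: Theorem~\ref{thm:superthm} is stated as a summary of the section, and its (implicit) proof is precisely the assembly of Lemmas~\ref{lem:gap:end}, \ref{lem:pat:composite:not:prime:power:constant:zero}, \ref{lem:additive:pattern:decompose} and Theorems~\ref{thm:dlog:additive:sound}--\ref{thm:dlog:additive:complete} that you describe, including the separate treatment of the degenerate patterns $0^{+}\,\gap\,0^{*}$. The only cosmetic slip is the remark that $c$ is ``arbitrary'' for $p=2$: membership in the displayed set still requires $\gscale{1}{\card{\aalph}} = \card{\aalph} \mid c$, i.e.\ $c = 0$, which one can always choose since $\gtimes{\dlog{2,1}}{c} = 0$ regardless.
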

For additive $\pair{r}{1}$-type Toeplitz words 
we can now give the exact generator~$\ampy\funin\primes\to\aalph$.
\begin{corollary}\label{cor:toeplitz:additive:infpg}
  Let $\toeplitz{P}$ 
  be additive with $\gapnr{P}=1$, so that
  $P = (\gtimes{\dlog{p,g}}{c}) \,\rotgap{d}$ 
  as in Theorem~\ref{thm:dlog:additive:complete}.
  Then $\toeplitz{P} = \pgs{\ampy}$ holds,
  where $\ampy\funin\primes\to\aalph$ is defined for all $q\in\primes$ by
  \[
    \ampy(q) = 
    \begin{cases}
      \gtimes{\dlog{p,g}(i)}{c} & \text{if $q \equiv i \pmod{p}$, for some $i$ with $1 \le i < p$} \\
      d & \text{if $q = p$}
    \end{cases}
  \]
  The sequence $\toeplitz{P}$ is infinitely prime generated if and only if $c \ne 0$ and $p > 2$,
  by Dirichlet's Theorem on arithmetic progressions~\cite{diri:1837}.
\end{corollary}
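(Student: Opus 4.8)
The plan is to use the fact, recalled in Section~\ref{sec:additive}, that a completely additive sequence $\astr\in\strp{\aalph}$ is recovered from its restriction to the primes, namely $\astr = \pgs{\domrestr{\astr}{\primes}}$, since $\nth{\astr}{n} = \summ{p\in\primes}{}{\gtimes{\padval{p}{n}}{\nth{\astr}{p}}}$ by complete additivity and the prime factorisation of $n$. By hypothesis $\toeplitz{P}$ is additive (this is exactly what Theorem~\ref{thm:dlog:additive:complete}, together with Lemmas~\ref{lem:dlog:cap} and~\ref{lem:add:ghom}, guarantees under the assumption $\gscale{p-1}{\card{\aalph}}\mid c$), so it suffices to verify $\nth{\toeplitz{P}}{q} = \ampy(q)$ for every prime $q$; then $\toeplitz{P} = \pgs{\domrestr{\toeplitz{P}}{\primes}} = \pgs{\ampy}$ follows immediately.

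The first step is to instantiate the recurrence equations of Lemma~\ref{lem:toeplitz:recurrence} for the pattern $P = a_1 \ldots a_{p-1}\,\rotgap{d}$, where $a_i = \gtimes{\dlog{p,g}(i)}{c}$. The single gap sits at position $p$, so for all $n\in\nat$ one obtains $\nth{\toeplitz{P}}{pn+i} = a_i$ for $1\le i\le p-1$ and $\nth{\toeplitz{P}}{p(n+1)} = \nth{\toeplitz{P}}{n+1} + d$. Note also $\nth{\toeplitz{P}}{1} = a_1 = \gtimes{\dlog{p,g}(1)}{c} = \gtimes{0}{c} = 0$, since $\dlog{p,g}(1) = \log_g 1 = 0$. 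If $q = p$, the case $n = 0$ of the gap equation gives $\nth{\toeplitz{P}}{p} = \nth{\toeplitz{P}}{1} + d = d = \ampy(p)$. If $q \ne p$, then $p\nmid q$, so $q = pn + i$ for a unique $n\ge 0$ and $1\le i < p$; hence $q\equiv i\pmod p$ and $\nth{\toeplitz{P}}{q} = a_i = \gtimes{\dlog{p,g}(i)}{c} = \ampy(q)$. This shows $\domrestr{\toeplitz{P}}{\primes} = \ampy$, and therefore $\toeplitz{P} = \pgs{\ampy}$.

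For the final equivalence, recall that $\pgs{\ampy}$ is infinitely prime generated precisely when $S = \displset{q\in\primes}{\ampy(q)\ne 0}$ is infinite. If $c = 0$ then $\gtimes{\dlog{p,g}(i)}{c} = 0$ for every $i$, and if $p = 2$ then the only relevant index is $i = 1$ with $\dlog{2,1}(1) = 0$; in both cases $\ampy(q) = 0$ for all primes $q\ne p$, so $S\subseteq\{p\}$ is finite. Conversely, assume $c\ne 0$ and $p > 2$. Then $g\in\{2,\ldots,p-1\}$ (no integer is a primitive root of a prime $>2$ via the value $1$), so for the index $i$ with $1\le i< p$ and $i\equiv g\pmod p$ we have $\dlog{p,g}(i) = \log_g g = 1$, whence $\ampy(q) = \gtimes{1}{c} = c\ne 0$ for every prime $q\equiv g\pmod p$. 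Since $\gcd{g}{p} = 1$, Dirichlet's theorem~\cite{diri:1837} supplies infinitely many such primes, so $S$ is infinite.

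I do not expect a real obstacle here; the only places that require attention are the index bookkeeping in Lemma~\ref{lem:toeplitz:recurrence} (the gap at position $p$ and the base value $\nth{\toeplitz{P}}{1} = 0$), and carving out the two degenerate situations $c = 0$ and $p = 2$ when proving the characterisation of infinite prime generation.
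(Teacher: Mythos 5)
Your proof is correct and is exactly the argument the paper leaves implicit (the corollary is stated without a written proof): read off $\toeplitz{P}$ at primes from the recurrence equations of Lemma~\ref{lem:toeplitz:recurrence}, use that a completely additive sequence is generated by its restriction to the primes, and apply Dirichlet to the residue class of $g$ for the infinitude claim. The bookkeeping you flag (gap at position $p$, $\toeplitz{P}(1)=0$, and the degenerate cases $c=0$ and $p=2$) is handled correctly.
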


\begin{example}
  The pattern $P = 001011\rotgap{1}$ derived from $\dlog{7,3}$
  yields an additive binary sequence 
  whose underlying (infinite) prime set is $X = \primes\cap\residues{3,5,6}{7}\cup\{7\}$,
  that is, $\toeplitz{P} = \pgs{X}$.
\end{example}

\begin{example}
  Toeplitz patterns for single prime generated sequences 
  have a simple shape: 
  Let $p\in\primes$, $d \in \aalph$ and define 
  $\funap{\ampy}{p} = d$ and $\funap{\ampy}{q} = 0$ for $q\ne p$.
  Then 
  $\pgs{\ampy} = \toeplitz{0^{p-1}\tsp\rotgap{d}}$ 
  by Corollary~\ref{cor:toeplitz:additive:infpg} (take $c=0_{\aalph}$).
  E.g., for the \pd{} sequence $\PD = \pgs{2}$ we have $\PD = \toeplitz{0\rotgap{1}}$.
\end{example}

\section{Toeplitz Permutations}\label{sec:toeplitz:permutations}
There is a strong connection between a Toeplitz pattern~$P$ 
and the arithmetic progressions through~$\toeplitz{P}$.
For $\gapnr{P} = 1$ we show that every subsequence $\toeplitz{P}_{a,b}$ 
is a Toeplitz word $T(P_{a,b})$ where $P_{a,b}$ is derived from $P$.
Thus the classification of the arithmetic self-similarity of $\pair{r}{1}$ Toeplitz words 
is reduced to a problem of analyzing patterns.
And, using the results of Section~\ref{sec:toeplitz:additive},
we conclude that $\AS{\astr} = \displset{\pair{b}{b}}{b\in\posnat}$ 
for all non-periodic additive Toeplitz words~$\astr\in\strp{\aalph}$ of type~$\pair{r}{1}$.

\begin{lemma}\label{lem:abc}
  Fix $b,r \in \mathbb{N}_{>0}$ s.t.\ $\gcd{b}{r} =1$ and $r \geq 2$. 
  Then $\exists\, c,m \in \mathbb{N}_{>0}$ s.t.\ $cb = r^{m} - 1$. 
\end{lemma}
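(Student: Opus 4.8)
The plan is to reduce the claim to the elementary fact that $r$ has finite multiplicative order modulo~$b$. First I would observe that, since $\gcd{b}{r} = 1$, the residue class $\residues{r}{b}$ is a unit in $\congrclasses{b}$, i.e.\ it lies in $\mgroup{b}$ (in the degenerate case $b = 1$ this group is trivial, and the argument below still goes through). As $\mgroup{b}$ is a finite group, the powers $\residues{r}{b}, \residues{r^2}{b}, \residues{r^3}{b}, \ldots$ cannot all be distinct, so by the pigeonhole principle there are integers $0 \le i < j$ with $r^i \equiv r^j \pmod b$; multiplying this congruence by the inverse of the unit $\residues{r^i}{b}$ gives $r^{j-i} \equiv 1 \pmod b$. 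I would then set $m = j - i \ge 1$. (Equivalently, one may take $m = \varphi(b)$ and appeal to Euler's theorem, folklore in the same spirit as Lemma~\ref{lem:discrete}.)

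With such an $m$ fixed, $b$ divides $r^m - 1$, so I would define $c \defdby (r^m - 1)/b$, which is an integer satisfying $cb = r^m - 1$ by construction. The last thing to verify is that $c \in \posnat$: since $r \ge 2$ and $m \ge 1$ we have $r^m - 1 \ge 2 - 1 = 1 > 0$, and $b \ge 1$, so $c > 0$, and a positive integer is at least~$1$.

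I expect no real obstacle. The only points needing a little attention will be the boundary case $b = 1$ (covered by $m = 1$, $c = r - 1 \ge 1$) and the need for $c$ to be \emph{strictly} positive rather than just a nonnegative integer — which is exactly where the hypothesis $r \ge 2$ is used.
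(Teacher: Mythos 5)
Your proof is correct and follows essentially the same route as the paper's: a pigeonhole argument on powers of $r$ modulo $b$, followed by cancellation of $r^i$ using $\gcd{b}{r}=1$ to get $r^m \equiv 1 \pmod b$. The only cosmetic differences are that you argue directly rather than by contradiction and explicitly check that $c$ is strictly positive (using $r \ge 2$), a detail the paper leaves implicit.
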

\begin{proof}
      Suppose that $b \nmid r^{m}-1$ for all $ m \in \mathbb{N}_{>0}$. 
      Then by the pigeon hole principle there must be some $p,s,v \in \mathbb{N}_{>0}$ such that
        \begin{align*}
          r^{p}-1   & \equiv v  \pmod{b} \\
          r^{p+s}-1 & \equiv v  \pmod{b}  
        \end{align*}
      It follows that:
        \begin{align*}
            b &\mid r^{p+s} - r^{p}    \\
            b &\mid r^{p}(r^{s}-1)    \\
            b &\mid r^{s}-1
        \end{align*}
      So then $b$ divides $r^{s}-1$, contradicting the assumption.  
\end{proof}

In the sequel we index Toeplitz patterns $P$ (and sequences~$\toeplitz{P}$) by positive integers, 
i.e., $P = P(1) P(2) \ldots P(r)$ with $r = \length{P}$.
We write $P(k)$ to denote $P^\omega(k) = P(k \mod r)$ where we take 
$1,2,\ldots,r$ as the representatives of the congruence classes modulo $r$.

\begin{definition}\label{def:arith:toeplitz:permutation}
  Let $P$ be a one-gap $\aalph$-pattern of length $r \ge 2$ 
  with its single gap $f\in\symgroup{\aalph}$ at the end. 
  Fix $a,b \in \posnat$ such that $a \leq b$ and $\gcd{b}{r} =1$. 
  Let $c,m \in \posnat$ with $m$ minimal such that $cb = r^m - 1$ (Lemma~\ref{lem:abc}).
  The \emph{arithmetic permutation $P_{a,b}$ of $P$} is defined by
    \begin{align*} 
      P_{a,b} = \Tpow{P}{m}(a) \Tpow{P}{m}(a+b) \ldots \Tpow{P}{m}(a+b(r^{m}-1))
    \end{align*} 
\end{definition}
Note that the pattern $P_{a,b}$ is a \emph{permutation} of $\Tpow{P}{m}$ (so $\gapnr{P_{a,b}} = 1$)
because $b$ is a generator of the additive group~$\{1,2,\ldots,r^m\}$. 
The following lemma generalizes~\cite[Theorem~3.8]{dann:2011}.

Recall that we index Toeplitz patterns $P$ (and sequences~$\toeplitz{P}$) by positive integers, 
i.e., $P = P(1) P(2) \ldots P(r)$ with $r = \length{P}$.
We write $P(k)$ to denote $P^\omega(k) = P(k \mod r)$ where we take 
$1,2,\ldots,r$ as the representatives of the congruence classes modulo $r$.

\begin{lemma}\label{lem:arith:toeplitz:permutation}
  Let $P \in \newrd{(\gaalph)}$ be a pattern of length $r \ge 2$ with its single gap $\agap$ at the end.
  Let $a,b \in \posnat$ such that $a \leq b$ and $\gcd{b}{r} =1$.
  Then $\toeplitz{P}_{a,b} = \toeplitz{P_{a,b}}$.
\end{lemma}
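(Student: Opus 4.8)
The plan is to show that both sides of the claimed identity are infinite words over $\aalph$ that agree at every positive index, by reducing everything to a single shared recurrence. First I would unfold the left-hand side: $\toeplitz{P}_{a,b}(n) = \toeplitz{P}(a + b(n-1))$ for all $n \in \posnat$. The key observation is that $\toeplitz{P} = \lim_{k} \Tpow{P}{k}$ and, crucially, $\Tpow{P}{m}$ already determines $\toeplitz{P}$ on a large initial segment: since $\toeplitz{P} = \Tsubst{P^\omega}{\toeplitz{P}}$ iterated, and $P$ has its single gap at the end, one has $\toeplitz{P}(k) = \Tpow{P}{m}(k)$ for all $k$ with $r^m \nmid k$ — the gap of $\Tpow{P}{m}$ sits at positions divisible by $r^m$. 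So I would first record that $\Tpow{P}{m}$ is itself a one-gap pattern of length $r^m$ with gap (some rotation-or-permutation, namely $\agap^m$, by Lemma~\ref{lem:toeplitz:recurrence} applied iteratively) at position $r^m$, and that $\toeplitz{\Tpow{P}{m}} = \toeplitz{P}$ by Lemma~\ref{lem:pattern:composition}. This lets me replace $P$ by $Q := \Tpow{P}{m}$ throughout, so without loss of generality $cb = r^m - 1$ becomes $b \mid \length{Q} - 1$ with $\length{Q} = r^m$, i.e.\ I may assume $cb = \length{Q} - 1$ with $c = 1$ after renaming; this is exactly the setup where $P_{a,b}$ is a genuine permutation of the pattern $Q$.

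The heart of the argument is then the following. Let $s = r^m = \length{Q}$, so $Q$ has letters $Q(1),\dots,Q(s)$ with $Q(s)$ the gap. I claim $\toeplitz{Q}$ satisfies, for every $j \in \{1,\dots,s\}$ and every $n \in \nat$,
\[
  \toeplitz{Q}(sn + j) =
  \begin{cases}
    Q(j) & \text{if } j \ne s,\\
    \agap^m\bigl(\toeplitz{Q}(n+1)\bigr) & \text{if } j = s,
  \end{cases}
\]
which is just Lemma~\ref{lem:toeplitz:recurrence} specialized to a one-gap pattern (the single gap index is $h_1 = s$, the single ``$q$'' of that lemma is $1$, and the compressed sequence is $\toeplitz{Q}$ itself). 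Now I would run the same recurrence on the right-hand side $\toeplitz{P_{a,b}}$. By definition $P_{a,b}(i) = Q(a + b(i-1))$ for $i = 1,\dots,s$; since $b$ generates the cyclic group $\Z/s\Z$ (as $\gcd(b,s)=\gcd(b,r^m)=1$), the map $i \mapsto a + b(i-1) \bmod s$ is a bijection of $\{1,\dots,s\}$, so $P_{a,b}$ contains exactly one gap, occurring at the index $i^\ast$ with $a + b(i^\ast - 1) \equiv s \equiv 0 \pmod s$, and that gap is again $\agap^m$. Applying Lemma~\ref{lem:toeplitz:recurrence} to $P_{a,b}$ gives a recurrence of the same shape, with period $s$, gap index $i^\ast$, and compressed sequence $\toeplitz{P_{a,b}}$.

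The main obstacle — and the part I expect to need the most care — is matching the two recurrences at the gap positions so that an induction on $n$ goes through cleanly. On the non-gap positions it is immediate: for $n \in \nat$ and $i \ne i^\ast$, $\toeplitz{P_{a,b}}(sn+i) = P_{a,b}(i) = Q(a+b(i-1)) = \toeplitz{Q}(a+b(i-1))$, and one checks that $a + b(sn + i - 1)$ reduces modulo $s$ to a non-gap index of $Q$, so the left-hand side $\toeplitz{Q}(a + b(sn+i-1))$ equals $Q$ of that index too — here I use that the ``arithmetic progression through $\toeplitz{Q}$'' interacts with the base-$s$ digit structure exactly as the ``arithmetic progression through the pattern'' does, because $a \le b$ forces no carry into the relevant digit, which is the role of the hypothesis $a \le b$. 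At the gap position $i^\ast$ the progression $a + b(sn + i^\ast - 1)$ is $\equiv 0 \pmod s$, say equal to $s\cdot t$ with $t = (a + b(sn+i^\ast-1))/s$; I must verify that $t - 1 = a' + b'(n' )$-style expression, i.e.\ that dividing the progression by $s$ yields again an arithmetic progression of the form $a'' + b n$ for a suitable $a''$ with $a'' \le b$, so that the induction hypothesis $\toeplitz{Q}(\text{that index}) = \toeplitz{P_{a'',b}}(n) = \toeplitz{(P_{a,b})_{?,?}}(\dots)$ closes up — in fact one shows directly $t = \toeplitz{Q}_{a',b}(n+1)$ for the appropriate $a'$, and simultaneously $\toeplitz{P_{a,b}}(sn + i^\ast) = \agap^m(\toeplitz{P_{a,b}}(n+1))$, so it suffices that $a' \le b$ and $\gcd(b,r)=1$ still hold and that $P_{a,b}$'s compressed pattern is $P_{a',b}$; this last bookkeeping — that the one-step compression of $P_{a,b}$ is again of the form $P_{a',b}$ — is the delicate combinatorial point, and I would isolate it as a short sub-claim proved by a direct index computation modulo $s$ and $r$. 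Once that is in place, the two sequences satisfy the same recurrence with the same initial data, hence are equal, giving $\toeplitz{P}_{a,b} = \toeplitz{P_{a,b}}$.
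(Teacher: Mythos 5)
Your proposal is correct and follows essentially the same route as the paper's proof: replace $P$ by $\Tpow{P}{m}$ with $cb=r^m-1$, note that both sides obey the one-gap Toeplitz recurrence of period $r^m$, handle non-gap residues directly, and close the induction at the gap index $i^\ast=ac+1$ via the identity $a+b(i^\ast-1)=ar^m$, which makes the compressed progression again $a+bn$ (so your $a'$ is in fact $a$, and your isolated sub-claim is exactly the computation the paper performs). The only slip is the remark that one may take $c=1$ ``after renaming'' --- $c$ is determined by $b$ and $r^m$ and need not be $1$ --- but nothing in your argument actually uses this.
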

\begin{proof}
  Let $c,m \in \posnat$ with $m$ minimal such that $bc = r^m - 1$ so that $P_{a,b}$ is the permutation
  \begin{align*} 
    P_{a,b} = \Tpow{P}{m}(a) \Tpow{P}{m}(a+b) \ldots \Tpow{P}{m}(a+(r^{m}-1)b)
  \end{align*} 
  of $\Tpow{P}{m} = \Tpow{P}{m}(1) \Tpow{P}{m}(2) \ldots \Tpow{P}{m}(r^m)$ 
  as in Definition~\ref{def:arith:toeplitz:permutation}.
  Recall that $T(P) = T(\Tpow{P}{m})$ (Lemma~\ref{lem:pattern:composition}),
  and that the single gap~$f^m$ of $\Tpow{P}{m}$ is at the end, 
  $\nth{\Tpow{P}{m}}{r^m} = f^m$.
  Let $j$ be the index $1 \leq j \leq r^{m}$ such that 
  $\nth{P_{a,b}}{j} = f^{m}$.
  From $\nth{P_{a,b}}{j} = \nth{\Tpow{P}{m}}{a+(j-1)b}$ 
  we infer $a+(j-1)b \equiv r^m \pmod{r^m}$.
  Note that $j$ is unique with this property among $1,2,\ldots,r^m$.
  We also have that $a + ((ac+1)-1))b = a + acb = a + a(r^m-1) = ar^m$
  with $1 \le ac + 1 \le r^m$, and 
  so $a+(j-1)b=ar^m$ and $j = ac + 1$.
  We prove \[ \nth{\toeplitz{P}_{a,b}}{n} = \nth{\toeplitz{P_{a,b}}}{n} \]
  by induction on $n\in\nat$. 
  (We tacitly make use of Lemma~\ref{lem:toeplitz:recurrence}.)

  Let $n=sr^{m}+i$ for some $s\in\nat$ and $1 \leq i \leq r^{m}$.
  Consider the base case $n \le r^m$, that is, $s = 0$.
  For $i \ne j$, the claim follows from the construction of $P_{a,b}$.
  For $i = j$ we find
  \begin{align*}
    \nth{\toeplitz{P}_{a,b}}{j} 
    & = \nth{\toeplitz{\Tpow{P}{m}}_{a,b}}{j} \\
    & = \nth{\toeplitz{\Tpow{P}{m}}}{a+(j-1)b} \\
    & = \nth{\toeplitz{\Tpow{P}{m}}}{ar^m} \\
    & = f^m(\nth{\toeplitz{\Tpow{P}{m}}}{a}) \\
    & = \nth{\toeplitz{P_{a,b}}}{j}
  \end{align*}
  Now let $s \ge 1$.
  For $i \ne j$ we obtain
    \begin{align*}
      \nth{\toeplitz{P}_{a,b}}{n}
      & = \nth{\toeplitz{P}}{a+b(n-1)} \\
      & = \nth{\toeplitz{P}}{a+b(n-1-r^m)} \\
      & = \nth{\toeplitz{P}_{a,b}}{n-r^m)} \\
      & \stackrel{\text{IH}}{=} \nth{\toeplitz{P_{a,b}}}{n-r^m} \\
      & = \nth{\toeplitz{P_{a,b}}}{n} 
    \end{align*}  
  If $i=j=ac+1$ we find 
    \begin{align*}
      \toeplitz{P}_{a,b}(n)
      & = \toeplitz{P}(a+b(n-1)) \\
      & = \toeplitz{P}(a+b(sr^m+i-1)) \\
      & = \toeplitz{P}(a+b(sr^m+ac)) \\
      & = \toeplitz{P}(a+acb+sbr^{m}) \\
      & = \toeplitz{P}(a+a(r^{m}-1)+sbr^{m})\\
      & = \toeplitz{P}((a+sb)r^{m}) \\
      & = f^m(\toeplitz{P}(a+sb)) \\
      & = f^m(\toeplitz{P}_{a,b}(s+1)) \\
      & \stackrel{\text{IH}}{=} f^m(\toeplitz{P_{a,b}}(s+1)) \\
      & = \nth{\toeplitz{P_{a,b}}}{sr^m + j} \\
      & = \nth{\toeplitz{P_{a,b}}}{n} \qedhere
    \end{align*}  
\end{proof}

\begin{example}
  Let $P = 0123f$ with $f(n)= n+2 \mod{5}$ and $\astr = \toeplitz{P}$. 
  Then $\astr_{2,2} = \toeplitz{P_{2,2}}$ with $P_{2,2} = 1302f$,
  and $\astr_{2,12} = \toeplitz{P_{2,12}}$ with $P_{2,12} = 1302 f^{2}  1302 0 1302 2 1302 4 1302 1$;
  for the latter note that $cb = 2 \cdot 12 = 5^2 - 1 = r^{m} - 1$ and  
  $P^{(2)} = 0123 2 0123 4 0123 4 0123 0 0123 f^2$.
\end{example}

\begin{lemma}\label{lem:arith:toeplitz:gap:middle}
  Let $Q$ be a one-gap Toeplitz pattern of length $r$ of the form $Q  = u f v$
  for some $f \in \symgroup{\aalph}$, $u \in \Sigma^{+}$ and $v \in \Sigma^{*}$.
  Then $Q = P_{a,b}$ for $P = \reverse{u} \reverse{v} f$, $a = \length{u} \le b = \length{Q}-1$.
\end{lemma}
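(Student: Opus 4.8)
The plan is to exhibit the data required by Definition~\ref{def:arith:toeplitz:permutation}, namely $P := \reverse{u}\,\reverse{v}\,f$, $a := \length{u}$, and $b := \length{Q}-1$ (write $r := \length{Q}$), and then to unfold that definition and compare with $Q$ letter by letter. First I would check the side conditions: since $u \ne \wrdemp$, the word $P = \reverse{u}\,\reverse{v}\,f$ is a genuine one-gap $\aalph$-pattern, of length $\length{u}+\length{v}+1 = r \ge 2$, with its single gap $f$ at the end; $\gcd{b}{r} = \gcd{(r-1)}{r} = 1$; and $a = \length{u} \le \length{u}+\length{v} = r-1 = b$, exactly as Definition~\ref{def:arith:toeplitz:permutation} demands.

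Next I would pin down the auxiliary parameters $c,m$ of that definition. As $b = r-1$ already divides $r^{1}-1$, the minimal $m$ with $b \mid r^{m}-1$ is $m = 1$, witnessed by $c = 1$ (so we do not even need the general existence statement Lemma~\ref{lem:abc}); and $\Tpow{P}{1} = P \Tcmpx \gap = P$, since $\gap$ is the identity of the pattern monoid (Proposition~\ref{prop:Tpat:monoid}). Hence the definition collapses to
\[
  P_{a,b} = P(a)\, P(a+b)\, P(a+2b)\, \ldots\, P(a+(r-1)b),
\]
an $r$-letter word, where as usual $P(k)$ abbreviates $P(k \bmod r)$ with representatives $1,\ldots,r$. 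Since $b = r-1 \equiv -1 \pmod{r}$, this is $P_{a,b}(n) = P\bigl((\length{u}-(n-1)) \bmod r\bigr)$.

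The remaining step is the index chase. Writing $u = u_1\cdots u_s$ and $v = v_1\cdots v_t$ (so $s = \length{u}$, $t = \length{v}$, $r = s+t+1$), the pattern $P = u_s\cdots u_1\, v_t\cdots v_1\, f$ satisfies $P(i) = u_{s-i+1}$ for $1 \le i \le s$, $P(s+j) = v_{t-j+1}$ for $1 \le j \le t$, and $P(r) = f$. Substituting $n = 1,\ldots,r$ into $P_{a,b}(n) = P\bigl((s-(n-1)) \bmod r\bigr)$, I expect to obtain $P_{a,b}(k) = u_k$ for $1 \le k \le s$, then $P_{a,b}(s+1) = P(0 \bmod r) = P(r) = f$, and $P_{a,b}(s+1+j) = v_j$ for $1 \le j \le t$; that is, $P_{a,b} = u\, f\, v = Q$.

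I do not expect a genuine obstacle: the lemma is a verification. The only point that needs care is the interaction of the two reversals inside $P$ with the ``index $\mapsto b\cdot\text{index} \equiv -\,\text{index}$'' action, and in particular confirming that the gap of $P$ — which sits at index $r \equiv 0 \pmod{r}$ — is hit exactly when $n-1 = \length{u}$, i.e.\ at position $\length{u}+1$ of $P_{a,b}$, precisely where $f$ occurs in $Q = u f v$. Everything else is routine modular bookkeeping.
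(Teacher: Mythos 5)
Your proposal is correct and matches the paper, whose proof is simply ``Immediate from the definitions'': you have spelled out exactly that verification. The parameter check ($m=1$, $c=1$, $\Tpow{P}{1}=P$, $\gcd{(r-1)}{r}=1$, $a\le b$) and the index chase using $b\equiv -1\pmod r$ are all accurate, so the only difference is that you make explicit what the paper leaves to the reader.
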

\begin{proof}
  Immediate from the definitions.
\end{proof}

\begin{definition}\label{def:arith:toeplitz:gap:b:divides:r}
  Let $P = ufv$ with $u\in\newrd{\aalph}$, $f\in\symgroup{\aalph}$, and $v\in\wrd{\aalph}$.
  Let $j = \length{u} + 1$ the index of the single gap~$f$. 
  Let $a,b\in\posnat$ s.t.\ $a < j$ and $cb = r = \length{P}$ for some $c\in\posnat$,
  and define
  \begin{align*}
    B       & = P(a) P(a + b) \ldots P(a+(c-1)b) \\
    P_{a,b} & =
    \begin{cases}
      (\Tcomp{B}{u}) \; (\gap^{j'-1}  f \, \gap^{c-j'}) \; (\Tcomp{B}{v})
        & \text{if $\myex{j'}{j \equiv_r a + (j'-1)b}$} \\ 
      B & \text{otherwise (so $f$ is not in $B$)}
    \end{cases}
  \end{align*}
  In the first case ($\myex{j'}{j \equiv_r a + (j'-1)b}$) we have $B(j') = f$. 
  We also note that $b\length{B} = r$ and $b\length{P_{a,b}} = r^2$.
\end{definition}

\begin{aplemma}\label{lem:pat:fg}
  Let $f,g\in\symgroup{\aalph}$ such that $\funcomp{f}{g} = \funcomp{g}{f}$,
  and $P = ufv$ a pattern of length $r$
  for some $u\in\newrd{\aalph}$ and $v\in\wrd{\aalph}$.
  Define $P^g = (\Tcomp{g}{u}) \, f \, (\Tcomp{g}{v})= \funap{g}{u} \, f \, \funap{g}{v}$.
  Then $\toeplitz{P^g} = \funap{g}{\toeplitz{P}}$.
\end{aplemma}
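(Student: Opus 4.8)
The plan is to establish $\toeplitz{P^g} = \funap{g}{\toeplitz{P}}$ pointwise, i.e.\ to prove $\nth{\toeplitz{P^g}}{n} = \funap{g}{\nth{\toeplitz{P}}{n}}$ for all $n\in\posnat$, by strong induction on $n$ using the recurrence equations of Lemma~\ref{lem:toeplitz:recurrence}. First I would spell out the shape of $P^g$: unfolding $\Tcomp{g}{w}$ according to Definition~\ref{def:toeplitz} (here $g$ is a one-letter, one-gap pattern, so $\Tcomp{g}{w}$ simply recolours $w$ letter by letter) shows that $\Tcomp{g}{u}$ and $\Tcomp{g}{v}$ are the pointwise $g$-images of $u$ and $v$. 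Hence $P^g = \funap{g}{u}\,f\,\funap{g}{v}$ has the same length $r$ as $P$, its only gap is again $f$, and that gap sits at the same index $j = \length{u}+1$; note also $j\ge 2$ since $u\in\newrd{\aalph}$ is non-empty, and $r\ge 2$, so Lemma~\ref{lem:toeplitz:recurrence} applies to both $P$ and $P^g$.

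For the induction step I would write $n = rn'+i$ with $n'\ge 0$ and $1\le i\le r$, and abbreviate $\astr = \toeplitz{P}$, $\bstr = \toeplitz{P^g}$. If $i\ne j$, then $\nth{P}{i}\in\aalph$ and $\nth{P^g}{i} = \funap{g}{\nth{P}{i}}$, so Lemma~\ref{lem:toeplitz:recurrence} gives $\nth{\bstr}{n} = \nth{P^g}{i} = \funap{g}{\nth{P}{i}} = \funap{g}{\nth{\astr}{n}}$; this also settles the base case $n=1$, where $i=1\ne j$. If $i=j$, then $\nth{P}{j} = \nth{P^g}{j} = f$ and Lemma~\ref{lem:toeplitz:recurrence} yields $\nth{\astr}{n} = \funap{f}{\nth{\astr}{n'+1}}$ and $\nth{\bstr}{n} = \funap{f}{\nth{\bstr}{n'+1}}$. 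Because $j\ge 2$ and $r\ge 2$, one has $n'+1 \lt rn'+j = n$, so the induction hypothesis applies at $n'+1$ and
\[
  \nth{\bstr}{n} = \funap{f}{\funap{g}{\nth{\astr}{n'+1}}} = \funap{g}{\funap{f}{\nth{\astr}{n'+1}}} = \funap{g}{\nth{\astr}{n}},
\]
where the middle step is precisely the hypothesis $\funcomp{f}{g} = \funcomp{g}{f}$.

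The computation is short, and I expect no serious obstacle; the only points needing care are (i) checking that unfolding $\Tcomp{g}{u}$ and $\Tcomp{g}{v}$ leaves the gap in place as the same permutation $f$, and (ii) verifying that the recursive index $n'+1$ is strictly below $n$, which is exactly where the non-emptiness of $u$ (equivalently $j\ge 2$) is needed for the strong induction to be well-founded. The commutativity $\funcomp{f}{g}=\funcomp{g}{f}$ is not a difficulty but the heart of the matter: it is the single place where the hypothesis enters, letting the permutation $f$ produced at the gap of $P$ and the global recolouring $g$ slide past one another.
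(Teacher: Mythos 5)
Your proof is correct and follows essentially the same route as the paper's: induction on $n$ via the recurrence equations of Lemma~\ref{lem:toeplitz:recurrence}, splitting on whether the index $i$ hits the gap position $j$, and invoking $\funcomp{f}{g}=\funcomp{g}{f}$ exactly once in the gap case. Your extra remarks on well-foundedness (that $j\ge 2$ forces $n'+1<n$) and on why $\Tcomp{g}{u}$ is just the pointwise $g$-image are details the paper leaves implicit, but they do not change the argument.
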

\begin{proof}
  We show $\nth{\toeplitz{P^g}}{n} = \nth{\funap{g}{\toeplitz{P}}}{n}$
  by induction on $n\in\posnat$.
  Let $j = \length{u}+1$ (so $P(j) = P^g(j) = f$).
  Let $n = rn'+i$ for some $n'\in\nat$ and $1 \le i \le r$.
  Using the recurrence relations for Toeplitz words (Lemma~\ref{lem:toeplitz:recurrence}) 
  we obtain:
  \begin{align*}
    \nth{\toeplitz{P^g}}{n} & = \nth{P^g}{i} = \funap{g}{\nth{P}{i}} 
      = \funap{g}{\nth{\toeplitz{P}}{i}} 
      && \text{if $i \ne j$} \\
    \nth{\toeplitz{P^g}}{n} & = \funap{f}{\nth{\toeplitz{P^g}}{n'+1}} 
      \stackrel{\text{IH}}{=} \funap{f}{\funap{g}{\nth{\toeplitz{P}}{n'+1}}} \\
      & = \funap{g}{\funap{f}{\nth{\toeplitz{P}}{n'+1}}} 
      = \funap{g}{\nth{\toeplitz{P}}{n}} 
      && \text{if $i = j$}
      \qedhere
  \end{align*}
\end{proof}

\begin{lemma}\label{lem:arith:toeplitz:gap:b:divides:r}
  Let $Q$ be a one-gap pattern, and $a,b\in\posnat$ s.t.\ $b$ divides $\length{Q}$.
  Then $\toeplitz{Q}_{a,b} = \toeplitz{(\Tpow{Q}{m})_{a,b}}$ where $m$ is minimal such that
  the (single) gap in $\Tpow{Q}{m}$ is at index $j$ with $j \gt a$.
\end{lemma}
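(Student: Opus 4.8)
The plan is to reduce the hypothesis $b \mid \length{Q}$ to the special case already covered by Definition~\ref{def:arith:toeplitz:gap:b:divides:r}, namely that of a one-gap pattern whose gap lies at an index strictly greater than $a$, and then to verify that construction by an induction in the style of Lemma~\ref{lem:arith:toeplitz:permutation}. First I would check that the integer $m$ in the statement is well defined. Writing $\rho = \length{Q}$ and letting $j_k$ denote the index of the unique gap of $\Tpow{Q}{k}$, expanding $\Tpow{Q}{k+1} = Q \Tcmpx \Tpow{Q}{k}$ by Definition~\ref{def:toeplitz} shows that the $\rho^{k}$ gaps of $Q^{\rho^{k}}$ (at positions $j_1, j_1+\rho, \ldots, j_1+(\rho^{k}-1)\rho$) are filled in turn by the symbols of $\Tpow{Q}{k}$, so the only surviving gap is the image of the $j_k$-th one; hence $j_{k+1} = j_1 + (j_k - 1)\rho$. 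Since $\toeplitz{Q}$ is totally defined, $Q(1)$ is a letter, so $j_1 \ge 2$ and $\rho \ge 2$, and then $(j_k)_{k\ge1}$ is strictly increasing and unbounded; a least $m$ with $j_m > a$ therefore exists.

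Second, by Lemma~\ref{lem:pattern:composition} we have $\Tpow{Q}{m} \toeplitzeq Q$, that is $\toeplitz{\Tpow{Q}{m}} = \toeplitz{Q}$, so passing to the arithmetic subsequence indexed by $a, a+b, a+2b, \ldots$ on both sides gives $\toeplitz{Q}_{a,b} = \toeplitz{\Tpow{Q}{m}}_{a,b}$. Set $R = \Tpow{Q}{m}$; it is a one-gap pattern whose gap is at index $j_m > a$, and $b \mid \length{Q}$ entails $b \mid \rho^{m} = \length{R}$, so Definition~\ref{def:arith:toeplitz:gap:b:divides:r} applies to $R$ and produces the pattern $R_{a,b} = (\Tpow{Q}{m})_{a,b}$. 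It remains to prove $\toeplitz{R}_{a,b} = \toeplitz{R_{a,b}}$ for an arbitrary one-gap pattern $R = u f v$ of length $\rho' = cb$ whose gap index $j = \length{u}+1$ satisfies $a < j$.

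Third, I would prove this identity by induction on the position, using the Toeplitz recurrences (Lemma~\ref{lem:toeplitz:recurrence}) for $\toeplitz{R}$ and $\toeplitz{R_{a,b}}$. As the position $n$ grows, $a + b(n-1)$ runs modulo $\rho'$ with period $c$ through the $c$ values $a, a+b, \ldots, a+(c-1)b$, along which $R$ reads as the word $B$ of Definition~\ref{def:arith:toeplitz:gap:b:divides:r}; at the positions $n$ with $a+b(n-1)\not\equiv j \pmod{\rho'}$ one reads a fixed letter of $B$, which after unfolding $\Tcomp{B}{u}$ and $\Tcomp{B}{v}$ is exactly the corresponding non-gap letter of $R_{a,b}$, and if $b\nmid j-a$ no further positions occur and $\toeplitz{R}_{a,b}=B^\omega=\toeplitz{R_{a,b}}$. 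Otherwise put $\delta = (j-a)/b$; since $a < j$ we have $1 \le \delta < c$, so $B$ has its gap exactly at index $j' = \delta + 1 \in \{2,\ldots,c\}$ and $a + b(j'-1) = j$ holds without wrap-around. The positions at which $a+b(n-1)$ meets a gap-index of $R$ are then exactly those with $n \equiv j' \pmod c$; at $n = j' + c\ell$ one has $a+b(n-1) = j + \rho'\ell$, so the recurrence of $\toeplitz{R}$ gives $\toeplitz{R}(a+b(n-1)) = f\big(\toeplitz{R}(\ell+1)\big)$, and comparing this with the $f$-gap recurrence of $\toeplitz{R_{a,b}}$ — whose $f$-gap is the $j'$-th of its $c$ gaps, sitting at index $\length{\Tcomp{B}{u}}+j' = c(j-1)+j'$ inside the block $\gap^{j'-1}\,f\,\gap^{c-j'}$ — closes the induction.

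The step I expect to be the main obstacle is this last induction: correlating, for each residue of $n$ modulo $c$ and of $a+b(n-1)$ modulo $\rho'$, the matching letter or gap of $R_{a,b}$ among its concatenated blocks $\Tcomp{B}{u}$, $\gap^{j'-1}\,f\,\gap^{c-j'}$, $\Tcomp{B}{v}$, and keeping the descent of $\toeplitz{R}$ synchronized with that of $\toeplitz{R_{a,b}}$. The hypothesis $a < j$ is exactly what makes this go through: it forces $a + b(j'-1) = j$ rather than $j + \rho'$, so the descent on the $\toeplitz{R}$ side carries no extra shift; without it the construction of Definition~\ref{def:arith:toeplitz:gap:b:divides:r} would be off by one application of $f$. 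Everything preceding, namely the existence of $m$ and the replacement of $Q$ by the $\toeplitzeq$-equivalent pattern $\Tpow{Q}{m}$, is purely formal.
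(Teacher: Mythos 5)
Your plan is correct and follows the same overall strategy as the paper: replace $Q$ by $R=\Tpow{Q}{m}$ using Lemma~\ref{lem:pattern:composition}, then verify $\toeplitz{R}_{a,b}=\toeplitz{R_{a,b}}$ by induction on the position, splitting on whether $n\equiv j'\pmod c$. The one genuine difference is in the gap residue class $n\equiv j'\pmod c$. The paper there runs a separate sub-induction showing that the subsequence of $\toeplitz{R_{a,b}}$ along that residue class equals $\toeplitz{G}$ for the twisted pattern $G=(\Tcomp{f}{u})\,f\,(\Tcomp{f}{v})$, shows that the corresponding subsequence of $\toeplitz{R}_{a,b}$ equals $f\circ\toeplitz{R}$, and then closes the gap with the auxiliary Lemma~\ref{lem:pat:fg} ($\toeplitz{P^g}=\funap{g}{\toeplitz{P}}$ for commuting gaps, applied with $g=f$). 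You instead keep a single induction and synchronize the two descents directly: at $n=c\rho'x+c(j-1)+j'$ both sides reduce, after one application of $f$, to the value at the common smaller index $cx+j'$ (since $\toeplitz{R}_{a,b}(cx+j')=\toeplitz{R}(\rho'x+j)=\funap{f}{\toeplitz{R}(x+1)}$ on the left and the $f$\nobreakdash-gap recurrence gives $\funap{f}{\toeplitz{R_{a,b}}(cx+j')}$ on the right), so the induction hypothesis applies and Lemma~\ref{lem:pat:fg} is never needed. This is a real (if modest) streamlining. Two points you gloss over and should spell out when writing this up: positions $n\equiv j'\pmod c$ whose reduction modulo $\length{R_{a,b}}$ lands inside $\Tcomp{B}{u}$ or $\Tcomp{B}{v}$ read the letter $\funap{f}{u(s+1)}$ (resp.\ $\funap{f}{v(\cdot)}$) directly rather than via the $f$\nobreakdash-gap; and positions $n\nequiv j'\pmod c$ landing in the middle block $\gap^{j'-1}f\,\gap^{c-j'}$ hit an identity gap, so they are not ``non-gap letters of $R_{a,b}$'' but require one extra descent before the induction hypothesis yields $B(n\bmod c)$. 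Both sub-cases go through, so the argument is sound. Your verification that $m$ exists (via $j_{k+1}=j_1+(j_k-1)\rho$ with $j_1\ge2$) is a detail the paper omits and is welcome.
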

\begin{proof}
  Let $P = \Tpow{Q}{m} = ufv$ of length $r = \length{Q}^m$ 
  for some $u\in\newrd{\aalph}$, $f\in\symgroup{\aalph}$, and $v\in\wrd{\aalph}$,
  and let $j  \length{u}+1$ be the index of the gap in $P$.
  Note that $\toeplitz{P} = \toeplitz{Q}$ by Lemma~\ref{lem:pattern:composition} 
  and $P_{a,b} = Q_{a,b}$ by definition.
  Let $n\in\posnat$.
  We prove $\nth{\toeplitz{P}_{a,b}}{n} = \nth{\toeplitz{P_{a,b}}}{n}$.
  
  Let $c\in\posnat$ be such that $bc = r$ and 
  define $B = P(a) P(a + b) \ldots P(a+(c-1)b)$. 
  We distinguish the following cases: 
  \begin{enumerate}
    \item 
      If $j \nequiv a + (j'-1)b \pmod{r}$ for all $j'\in\posnat$,
      then $P_{a,b} = B$, and so $\toeplitz{P_{a,b}} = B^\omega$ because $B$ is free of gaps.
      By Lemma~\ref{lem:toeplitz:recurrence} we obtain
      \begin{align*}
        \nth{\toeplitz{P}_{a,b}}{n} 
          & = \nth{\toeplitz{P}}{a + (n-1)b} \\
          & = P(a + (n-1)b) \\
          & = P_{a,b}(n)  = B(n) \\
          & = \nth{\toeplitz{P_{a,b}}}{n}
      \end{align*}
    \item 

      Assume there is $j'$ with $1 \le j' \le c$ such that $j \equiv a + (j'-1)b \pmod{r}$,
      so that $B(j') = f$.
      Let $r' = \length{P_{a,b}} = rc = \frac{r^2}{b}$.
      We make a further case distinction.
      \begin{enumerate}
        \item 
          Assume $n \nequiv j' \pmod c$.
          Let $n = r'n' + i$ for some $n'\in\nat$ and $1 \le i \le r'$, so that $i \nequiv j' \pmod{c}$.
          We prove $\nth{\toeplitz{P_{a,b}}}{r' x + i'} = B(i')$ for all $i' \equiv i \pmod{c}$ 
          by induction on $x\in\nat$.
          Using Lemma~\ref{lem:toeplitz:recurrence} we obtain
          (note that $\gapnr{\Tcomp{B}{x}}=0$ for all $x\in\wrd{\aalph}$)
          \begin{align*}
            \nth{\toeplitz{P_{a,b}}}{r' x + i'} 
              & = \nth{(\Tcomp{B}{u})}{i'} 
                = B(i')
              && \text{if $1 \le i' \le c \length{u}$} \\
            \nth{\toeplitz{P_{a,b}}}{r' x + i'} 
              & = \nth{(\Tcomp{B}{v})}{i''} 
                = B(i')
              && \text{if $i' = c \length{u} + c + i''$} \\
              &&& \text{for some $1 \le i'' \le c \length{v}$}
          \end{align*}
          For $i' = c \length{u} + i''$ for some $1 \le i'' \le c$ we find with Lemma~\ref{lem:toeplitz:recurrence}
          \begin{align*}
            \nth{\toeplitz{P_{a,b}}}{r' x + i'} 
              & = \nth{\toeplitz{P_{a,b}}}{c x + i''} \\
              & = \nth{\toeplitz{P_{a,b}}}{r' x' + i'''} && \text{for some $x'\in\nat$, and $i''' \equiv_c i$} \tag{$\ast$}\label{eq:'''} \\
              & \stackrel{\text{IH}}{=} B(i''') \\
              & = B(i')
          \end{align*}
          The identity~\eqref{eq:'''} holds because $i'' \equiv i' \equiv i \pmod{c}$ and $r' \equiv 0 \pmod{c}$.

          For $\nth{\toeplitz{P}_{a,b}}{n}$ we find,
          again with Lemma~\ref{lem:toeplitz:recurrence}
          \begin{align*}
            \nth{\toeplitz{P}_{a,b}}{r' n' + i}
              & = \nth{\toeplitz{P}}{a + (r' n' + i - 1)b} \\
              & = \nth{\toeplitz{P}}{r^2 n' + a + (i - 1)b} \\
              & = \nth{P}{a + (i - 1)b} 
              & = B(i)
          \end{align*}
          For the third step, note that we have $a + (i-1)b \nequiv_r j$  since $i \nequiv_c j'$.
          Hence we have shown $\nth{\toeplitz{P}_{a,b}}{n} = \nth{\toeplitz{P_{a,b}}}{n}$.
          
        \item 
          For $n \equiv j' \pmod c$, let $n = tc + j'$ for some $t \in \nat$. 
          We have that
          \begin{align*}
            \nth{\toeplitz{P}_{a,b}}{n}
            & = \nth{\toeplitz{P}}{a + (n-1)b} \\
            & = \nth{\toeplitz{P}}{a + (tc + j'-1)b} \\
            & = \nth{\toeplitz{P}}{rt + a + (j'-1)b} \\
            & = \nth{\toeplitz{P}}{rt + j} \\
            & = \funap{f}{\nth{\toeplitz{P}}{t + 1}}
          \end{align*}
          We also have
          \begin{align*}
            \nth{P_{a,b}}{n}
            & = G(t+1) \dquad \text{where $G = (\Tcomp{f}{u}) \,f\, (\Tcomp{f}{v}) = \funap{f}{u}\,f\,\funap{f}{v}$}
          \end{align*}
          because $\nth{(\Tcomp{B}{x})}{kc+j'} = (\Tcomp{f}{x})(k+1)$ 
          for all $x\in\wrd{\aalph}$, $k\in\nat$,
          and $\nth{(\gap^{j'-1}  f \, \gap^{c-j'})}{j'} = f$.
          
          We show $\nth{\toeplitz{P_{a,b}}}{xc + j'} = \nth{\toeplitz{G}}{x+1}$ by induction on $x$.
          Next we distinguish two cases for $x$.
          If $x \nequiv \length{u} \pmod{r}$, then
          \begin{align*}
            \nth{\toeplitz{P_{a,b}}}{xc + j'} 
            & = \nth{P_{a,b}}{xc + j'} && \text{since $P_{a,b}(xc + j') \in \aalph$} \\
            & = G(t+1) && \text{since $G(x+1) \in \aalph$} \\
            & = \nth{\toeplitz{G}}{x+1}
          \end{align*}
          If $x \equiv \length{u} \pmod{r}$, then $x = x' r + \length{u}$ for some $x'\in\nat$, and
          \begin{align*}
            \nth{\toeplitz{P_{a,b}}}{xc + j'} 
            & = \nth{\toeplitz{P_{a,b}}}{(x' r + \length{u})c + j'} \\
            & = \nth{\toeplitz{P_{a,b}}}{x' r' + \length{u}c + j'} \\
            & = \funap{f}{\nth{\toeplitz{P_{a,b}}}{cx' + j'}} \\
            & \stackrel{\text{IH}}{=} \funap{f}{\nth{\toeplitz{G}}{x' + 1}} \\
            & = \nth{\toeplitz{G}}{x + 1}
          \end{align*}
          We have shown $\nth{\toeplitz{P}_{a,b}}{n} = \funap{f}{\nth{\toeplitz{P}}{t + 1}}$,
          and $\nth{\toeplitz{P_{a,b}}}{n} = \nth{\toeplitz{G}}{t+1}$.          
          It thus remains to be shown that 
          $\funap{f}{\nth{\toeplitz{P}}{t + 1}} = \nth{\toeplitz{G}}{t+1}$, 
          which follows from Lemma~\ref{lem:pat:fg}.
        \end{enumerate}
  \end{enumerate}
\end{proof}

\begin{example}
  Let $Q = 0g12$ with $g(n) = n + 1 \mod 3$.
  We construct $Q_{2,2}$ as follows:
  First take $\Tpow{Q}{2} = 0112 0g^{2}12 0212 0012$
  so that the gap index 
  $j = 6 > a = 2$.
  Then 
  we find 
  \[
    Q_{2,2} = 
    (\Tcomp{B}{0112 0}) (\gap\gap g^2\gap \gap\gap \gap\gap) (\Tcomp{B}{12 0212 0012})
    \dquad\text{where $B = 1 2 g^{2} 2 2 2 0 2$}
  \]
\end{example}

\begin{theorem}\label{thm:arith:toeplitz}
  Every arithmetic subsequence of an $\pair{r}{1}$-type Toeplitz word is a Toeplitz word.
\end{theorem}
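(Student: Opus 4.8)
The plan is to reduce an arbitrary arithmetic progression to a composition of the two cases already settled, namely $\gcd{b}{r}=1$ (Lemma~\ref{lem:arith:toeplitz:permutation}) and $b$ dividing the pattern length (Lemma~\ref{lem:arith:toeplitz:gap:b:divides:r}), exploiting that arithmetic subsequences compose, $\asub{(\asub{\astr}{a}{b})}{c}{d}=\asub{\astr}{a+bc}{bd}$, and that $\toeplitz{\Tpow{P}{k}}=\toeplitz{P}$ for all $k\ge1$ (Lemma~\ref{lem:pattern:composition}). So let $P$ be a one-gap pattern of length $r=\length{P}\ge2$, put $\astr=\toeplitz{P}$, and fix $a,b$. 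Factor $b=b_1b_2$, where $b_1$ is the largest divisor of $b$ all of whose prime divisors divide $r$ and $b_2=b/b_1$ is coprime to $r$ (this factorization exists and is unique). Then $\asub{\astr}{a}{b}=\asub{(\asub{\astr}{a}{b_1})}{0}{b_2}$, so it suffices to show first that $\asub{\astr}{a}{b_1}$ is a Toeplitz word $\toeplitz{Q_1}$ whose length again has all its prime divisors among those of $r$, and then that $\asub{\toeplitz{Q_1}}{0}{b_2}$ is a Toeplitz word.

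For the first part, I would choose $K\ge1$ with $b_1\mid r^K$. Since $b_1$ divides $\length{\Tpow{P}{K}}=r^K$ and $\toeplitz{\Tpow{P}{K}}=\astr$, Lemma~\ref{lem:arith:toeplitz:gap:b:divides:r} applies with $Q=\Tpow{P}{K}$ and yields $\asub{\astr}{a}{b_1}=\toeplitz{Q_1}$ for the explicit one-gap pattern $Q_1=(\Tpow{P}{Km})_{a,b_1}$ (with $m$ as supplied by that lemma, using $\Tpow{(\Tpow{P}{K})}{m}=\Tpow{P}{Km}$). If $Q_1$ has no gap we are done, since then $\toeplitz{Q_1}$, and hence every arithmetic subsequence of it, is purely periodic and thus a Toeplitz word. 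Otherwise, the length identity recorded in Definition~\ref{def:arith:toeplitz:gap:b:divides:r} gives $b_1\cdot\length{Q_1}=(r^{Km})^2$, so $\length{Q_1}$ divides a power of $r$; in particular $\gcd{b_2}{\length{Q_1}}=1$.

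For the second part, I would first normalize the gap of $Q_1$ to the end: writing $Q_1=ufv$ with $u$ nonempty (its first symbol is not a gap, since $Q_1$ generates a totally defined word) and $v$ possibly empty, Lemma~\ref{lem:arith:toeplitz:gap:middle} gives $\toeplitz{Q_1}=\asub{\toeplitz{Q_1'}}{\length{u}}{\length{Q_1}-1}$, where $Q_1'=\reverse{u}\,\reverse{v}\,f$ is a pattern of length $\length{Q_1}$ with its gap at the end. Composing the two progressions, $\asub{\toeplitz{Q_1}}{0}{b_2}=\asub{\toeplitz{Q_1'}}{\length{u}}{(\length{Q_1}-1)b_2}$, and here $\gcd{(\length{Q_1}-1)b_2}{\length{Q_1}}=1$ (as $\gcd{\length{Q_1}-1}{\length{Q_1}}=1$ and $\gcd{b_2}{\length{Q_1}}=1$) and $\length{u}\le\length{Q_1}-1\le(\length{Q_1}-1)b_2$. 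Hence Lemma~\ref{lem:arith:toeplitz:permutation} exhibits this sequence as $\toeplitz{(Q_1')_{\length{u},(\length{Q_1}-1)b_2}}$, a Toeplitz word, which finishes the argument.

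I do not expect a deep obstacle here, since the two structural lemmas do the real work; the care goes into the bookkeeping. The main points to get right are: keeping track of the $\nat$- versus $\posnat$-indexing conventions and of the side condition $a\le b$ of Lemma~\ref{lem:arith:toeplitz:permutation} through each composition of progressions; verifying that the lengths produced by Lemma~\ref{lem:arith:toeplitz:gap:b:divides:r} and by the permutation construction stay divisors of powers of $r$, so that coprimality with $b_2$ is preserved along the way; and checking the degenerate sub-cases (for instance $b_1=1$, or a pattern becoming gap-free), where the intermediate word is purely periodic and the statement holds trivially.
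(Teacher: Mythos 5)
Your overall strategy --- factor $b$ into the part whose primes divide $r$ and the part coprime to $r$, then compose the two already-settled cases --- is the same as the paper's, but you apply the two key lemmas in the opposite order, and the order is not interchangeable here. The paper handles the coprime factor \emph{first}, via Lemma~\ref{lem:arith:toeplitz:permutation}: the output $P_{a_1,b_1}$ of that step is a genuine permutation of $\Tpow{P}{m}$ and therefore still a one-gap pattern, so Lemma~\ref{lem:arith:toeplitz:gap:b:divides:r} can be applied to it as the \emph{last} step. You instead apply Lemma~\ref{lem:arith:toeplitz:gap:b:divides:r} first and describe its output $Q_1=(\Tpow{P}{Km})_{a,b_1}$ as ``the explicit one-gap pattern''. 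It is not one-gap: by Definition~\ref{def:arith:toeplitz:gap:b:divides:r}, in the non-degenerate case the pattern produced is $(\Tcomp{B}{u})\,(\gap^{j'-1}f\,\gap^{c-j'})\,(\Tcomp{B}{v})$ with $c=r^{Km}/b_1$, and since $\gap=\mrm{id}_\aalph\in\symgroup{\aalph}$ is counted by $\gapnr{\cdot}$, we get $\gapnr{Q_1}=c>1$ whenever $b_1<r^{Km}$. This breaks your second stage: Lemma~\ref{lem:arith:toeplitz:gap:middle} and Lemma~\ref{lem:arith:toeplitz:permutation} are stated and proved only for one-gap patterns, so you can neither ``normalize the gap of $Q_1$ to the end'' nor invoke the permutation lemma on it, and nothing in the paper supplies the missing fact that arithmetic subsequences of a Toeplitz word generated by a multi-gap pattern are again Toeplitz.

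The repair is exactly the reversal of your two steps, which is what the paper does: first normalize the gap of $P$ to the end, then write $\toeplitz{P}_{a,b}=(\toeplitz{P}_{a_1,b_1})_{a_2,b_2}$ with $\gcd{b_1}{r}=1$ innermost, so that the one-gap property is preserved until the final application of Lemma~\ref{lem:arith:toeplitz:gap:b:divides:r}, whose multi-gap output does no harm because it is the end of the chain. Your arithmetic bookkeeping --- $b_1\mid r^K$, the identity $b_1\cdot\length{Q_1}=(r^{Km})^2$ forcing $\gcd{b_2}{\length{Q_1}}=1$, the side condition $a\le b$ for the permutation lemma, and the $\nat$ versus $\posnat$ offset (which should be $1$, not $0$, under the paper's indexing, as you yourself flagged) --- is sound and carries over unchanged to the corrected order.
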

\begin{proof}
  Let $P$ be a pattern with $r = \length{P}$ and $\gapnr{P}=1$,
  and let $a,b\in\posnat$.
  Without loss of generality:
  \begin{enumerate}
    \item 
      We assume the gap of $P$ to be at the end.
      For otherwise, using Lemma~\ref{lem:arith:toeplitz:gap:middle} we have
      a pattern~$Q$ of length $r$ with its gap at the end, 
      and $1 \le c \le r-1$ such that $Q_{c,r-1} = P$.
      By Lemma~\ref{lem:arith:toeplitz:permutation} we have that $\toeplitz{Q}_{c,r-1} = \toeplitz{P}$,
      and hence $\toeplitz{P}_{a,b} = \toeplitz{Q}_{c+(r-1)(a-1),b(r-1)}$.
      Then we proceed with constructing a pattern generating $\toeplitz{Q}_{c+(r-1)(a-1),b(r-1)}$.
    \item 
      We assume $b < r$, since $\toeplitz{P} = \toeplitz{\Tpow{P}{k}}$ 
      for all $k \ge 1$ by Lemma~\ref{lem:pattern:composition}.
  \end{enumerate}

  Let $b = b_1 b_2$ such that $b_1$ is maximal with $\gcd{b_1}{r} = 1$.
  Note that ($\ast$) $b_1$ contains all primes in the factorization of $b$ that
  do not occur in the prime factorization of $r$.
  Moreover let $a_2 \in \posnat$ such that $1 \le a_1 \le b_1$ where $a_1 = a - (a_2-1) b_1$.
  Then
  \begin{align*}
    \toeplitz{P}_{a,b} = (\toeplitz{P}_{a_1,b_1})_{a_2,b_2}
  \end{align*} 
  by composition of arithmetic progressions.
  The sequence $\toeplitz{P}_{a_1,b_1}$ is generated by the Toeplitz pattern $P_{a_1,b_1}$ 
  of length $r^m$ with one gap by Lemma~\ref{lem:arith:toeplitz:permutation} (for some $m \ge 1$). 
  Thus it suffices to show that $\toeplitz{P_{a_1,b_1}}_{a_2,b_2}$ is generated by a Toeplitz pattern.
  By ($\ast$) all primes in the factorization of $b_2$ occur in $r$, 
  so also in $r^m$. Hence there exists $n \ge 1$ such that $b_2$ divides $(r^m)^n = r^{mn}$. 
  Since $\toeplitz{P}_{a_1,b_1} = \toeplitz{\Tpow{P_{a_1,b_1}}{n}}$, 
  $\gapnr{\Tpow{P_{a_1,b_1}}{n}} = 1$, and $\length{\Tpow{P_{a_1,b_1}}{n}} = r^{nm}$ 
  the claim follows by Lemma~\ref{lem:arith:toeplitz:gap:b:divides:r}.
\end{proof}

%

\section{Keane Words}\label{sec:keane}
We generalize (binary) \emph{generalized Morse sequences} as introduced by Keane in~\cite{kean:1968}, 
to sequences over the cyclic additive group $\aalph$, but restrict to `uniform' infinite block products
$u \times u \times u \times \cdots$, or `Keane words' as we call them.
\begin{definition}\label{def:keane:product}
  The \emph{Keane product} is the binary operation~$\sKmult$ on $\wrd{\aalph}$
  defined as follows:
  \begin{align*}
    \Kmult{u}{\wrdemp} & = \wrdemp
    &&&
    \Kmult{u}{av} & = (u + a)(\Kmult{u}{v})
  \end{align*}
  for all $u,v\in\wrd{\aalph}$ and $a\in \aalph$.
  We define $\Kpow{u}{n}$ by $\Kpow{u}{0} = 0$ and 
  $\Kpow{u}{n+1} = \Kmult{u}{\Kpow{u}{n}}$.
  
  Let $u\in\wrd{\aalph}$ with $\length{u} \ge 2$ and $\nth{u}{0} = 0$.
  The \emph{Keane word generated by $u$} is defined by
  \[ \keane{u} = \lim_{n\to\infinity} \Kpow{u}{n} \]
\end{definition}
The product $\Kmult{u}{v}$ is formed by concatenation
of $\length{v}$ copies of $u$
(so $\length{\Kmult{u}{v}} = \length{u}\cdot\length{v}$),
taking the $i$th copy as $u + \nth{v}{i}$
($0\leq i\leq \length{v}-1$).
As $0$ is the identity with respect to the $\sKmult$-operation,
$\Kmult{u}{v}$ is a proper extension of $u$
whenever $\nth{v}{0} = 0$ and $\length{v} \ge 2$.
Hence $\keane{u}$ is well-defined and is the unique infinite fixed point of
$x \mapsto u \times x$.
Also note that $\keane{u}$ is the iterative limit of the $\length{u}$-uniform morphism~$h$ 
defined by $h(a) = u + a$, for all $a\in\aalph$;
thus Keane words are automatic sequences~\cite{allo:shal:2003}.

\begin{proposition}\label{prop:keane:monoid}
    $\triple{\wrd{\aalph}}{\sKmult}{0}$ is a monoid.
\end{proposition}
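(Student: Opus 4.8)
The plan is to verify the two monoid axioms for $\sKmult$ on $\wrd{\aalph}$ --- that the one-letter word $0$ (whose single letter is the group identity $0_{\aalph}$) is a two-sided unit, and that $\sKmult$ is associative --- each by an elementary induction on word length, after first isolating two auxiliary identities describing how $\sKmult$ interacts with concatenation and with pointwise letter-addition.

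First I would record the \emph{left-linearity of $\sKmult$ over concatenation in its right argument}: for all $u,y,z\in\wrd{\aalph}$,
\[
  \Kmult{u}{(yz)} = (\Kmult{u}{y})\,(\Kmult{u}{z}),
\]
proved by induction on $y$; the base case $y=\wrdemp$ is immediate, and the step case $y=ay'$ uses $\Kmult{u}{(a\,y'z)}=(u+a)(\Kmult{u}{(y'z)})$ and the induction hypothesis. Next I would record the \emph{commutation with letter-addition}: for all $u,v\in\wrd{\aalph}$ and $a\in\aalph$,
\[
  (\Kmult{u}{v}) + a = \Kmult{u}{(v+a)},
\]
proved by induction on $v$, using that pointwise letter-addition distributes over concatenation together with the recursive clause for $\sKmult$ and associativity in $\aalph$. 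The unit axiom is then immediate: $\Kmult{u}{0} = (u+0_{\aalph})(\Kmult{u}{\wrdemp}) = u$, and $\Kmult{0}{u}=u$ follows by induction on $u$ (base $\Kmult{0}{\wrdemp}=\wrdemp$; step $\Kmult{0}{(av)}=(0+a)(\Kmult{0}{v})=a\,(\Kmult{0}{v})=av$ by the induction hypothesis, since $0+a$ is the one-letter word $a$).

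For associativity I would prove $\Kmult{(\Kmult{u}{v})}{w} = \Kmult{u}{(\Kmult{v}{w})}$ for all $u,v,w\in\wrd{\aalph}$ by induction on $w$. The base case $w=\wrdemp$ has both sides equal to $\wrdemp$. For $w=cw'$, compute
\begin{align*}
  \Kmult{(\Kmult{u}{v})}{(cw')}
  &= ((\Kmult{u}{v}) + c)\,(\Kmult{(\Kmult{u}{v})}{w'}) \\
  &= (\Kmult{u}{(v+c)})\,(\Kmult{u}{(\Kmult{v}{w'})}) \\
  &= \Kmult{u}{((v+c)\,(\Kmult{v}{w'}))} \\
  &= \Kmult{u}{(\Kmult{v}{(cw')})}
\end{align*}
applying, in turn, the defining clause of $\sKmult$, then the commutation identity together with the induction hypothesis, then left-linearity over concatenation, and finally the defining clause of $\sKmult$ again. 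I expect no genuine difficulty: the computations are all routine once the groundwork is laid. The only point that needs care --- and the reason the two auxiliary identities must be established first --- is that in $\Kmult{v}{(cw')} = (v+c)\,(\Kmult{v}{w'})$ the leading block $v+c$ is a word of length $\length{v}$ rather than a single letter, so the outer product cannot be unfolded letter-by-letter and must instead be reassembled via left-linearity.
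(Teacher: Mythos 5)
Your proof is correct: the two auxiliary identities (distribution of $\sKmult$ over concatenation in the right argument, and commutation with pointwise letter-addition) are exactly what is needed to push the induction on $w$ through for associativity, and the unit computations are right. The paper states this proposition without proof, so there is nothing to compare against; your argument is the standard verification one would expect to see.
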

Note that $\sKmult$ is not commutative, e.g.,
$\Kmult{00}{01} = 0011$ whereas $\Kmult{01}{00} = 0101$. 

\begin{lemma}\label{lem:keane:nth}
  Let $u \in \wrd{\aalph}$ 
  with $u(0) = 0$ and $k = \length{u} \ge 2$, and
  let $0 \le i \lt k$.
  Then for all $n \ge 0$ we have $\nth{\keane{u}}{nk+i} = \nth{\keane{u}}{n} + \nth{u}{i}$,
  and so $\asub{\keane{u}}{i}{k} \similarx \keane{u}$.
  Hence (by Lemma~\ref{lem:AS:closed:under:compostion})
  $\pair{i}{k^n} \in \AS{\keane{u}}$ 
  for all $n\ge 0$ and $0 \le i \lt k^n$.
\end{lemma}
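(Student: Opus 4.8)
The statement to prove is Lemma~\ref{lem:keane:nth}: for $u\in\wrd{\aalph}$ with $u(0)=0$ and $k=\length{u}\ge 2$, we have $\nth{\keane{u}}{nk+i}=\nth{\keane{u}}{n}+\nth{u}{i}$ for all $n\ge 0$ and $0\le i<k$; consequently $\asub{\keane{u}}{i}{k}\similarx\keane{u}$, and then $\pair{i}{k^n}\in\AS{\keane{u}}$ for all $n\ge 0$, $0\le i<k^n$. The plan is to derive the recurrence directly from the fixed-point characterization $\keane{u}=\Kmult{u}{\keane{u}}$, then feed it into Lemma~\ref{lem:AS:closed:under:compostion}.

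First I would record that $\keane{u}$ is the unique infinite fixed point of $x\mapsto\Kmult{u}{x}$, as already noted after Definition~\ref{def:keane:product}; in particular $\keane{u}=\Kmult{u}{\keane{u}}$. Now unfold the definition of the Keane product on the right-hand side: writing $\keane{u}=a_0a_1a_2\ldots$ with $a_n=\nth{\keane{u}}{n}$, the product $\Kmult{u}{\keane{u}}$ is the concatenation of the blocks $(u+a_0)(u+a_1)(u+a_2)\cdots$, each of length $k$. Hence the letter at position $nk+i$ of $\Kmult{u}{\keane{u}}$ is the $i$-th letter of the block $u+a_n$, namely $\nth{u}{i}+a_n=\nth{u}{i}+\nth{\keane{u}}{n}$. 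Since $\keane{u}=\Kmult{u}{\keane{u}}$ this equals $\nth{\keane{u}}{nk+i}$, giving the desired recurrence. (One may instead run a short induction on $n$ using $\Kpow{u}{m+1}=\Kmult{u}{\Kpow{u}{m}}$ and the fact that $\Kpow{u}{m}$ is a prefix of $\keane{u}$ for all $m$, but the fixed-point argument is cleaner.)

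For the similarity claim, fix $0\le i<k$ and set $c=\nth{u}{i}\in\aalph$. By the recurrence, $\nth{\asub{\keane{u}}{i}{k}}{n}=\nth{\keane{u}}{i+kn}=\nth{\keane{u}}{n}+c$ for all $n\ge 0$, so $\asub{\keane{u}}{i}{k}=\keane{u}+c$ and thus $\asub{\keane{u}}{i}{k}\similarx\keane{u}$, i.e.\ $\pair{i}{k}\in\AS{\keane{u}}$ for every $0\le i<k$. Finally, $\pair{i}{k^n}\in\AS{\keane{u}}$ for all $n\ge 0$ and $0\le i<k^n$ follows by induction on $n$: the base case $n=0$ is $\pair{0}{1}\in\AS{\keane{u}}$ (trivial, as $\asub{\keane{u}}{0}{1}=\keane{u}$), and for the step, given $\pair{i}{k^n}\in\AS{\keane{u}}$ and any $0\le i'<k^{n+1}$, write $i'=i+k^n j$ with $0\le i<k^n$ and $0\le j<k$ (wait—one must be careful about index conventions since these sequences are indexed from $0$); apply Lemma~\ref{lem:AS:closed:under:compostion} to the pairs $\pair{j}{k}\in\AS{\keane{u}}$ and $\pair{i}{k^n}\in\AS{\keane{u}}$ to get $\pair{j+ki}{k\cdot k^n}=\pair{j+ki}{k^{n+1}}\in\AS{\keane{u}}$, and observe that as $i,j$ range over the stated sets the value $j+ki$ ranges over all of $\{0,\ldots,k^{n+1}-1\}$.

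I do not expect any serious obstacle here; the lemma is essentially a direct computation from the fixed-point equation plus bookkeeping. The one point requiring mild care is the order of composition of arithmetic progressions in the last step—matching the shape $\pair{a+bc}{bd}$ of Lemma~\ref{lem:AS:closed:under:compostion} to the decomposition $i'=j+ki$ so that every residue modulo $k^{n+1}$ is hit exactly once—so I would double-check that the roles of the two progressions are assigned consistently with the $0$-indexing convention used in Definition~\ref{def:AS} and Lemma~\ref{lem:AS:closed:under:compostion}.
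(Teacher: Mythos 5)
Your proof is correct and takes essentially the same route as the paper: both derive the recurrence $\keane{u}(nk+i)=\keane{u}(n)+u(i)$ by unfolding the block structure of the Keane product and then obtain $\pair{i}{k^n}\in\AS{\keane{u}}$ via Lemma~\ref{lem:AS:closed:under:compostion}. The only (cosmetic) difference is that you read the recurrence off the infinite fixed-point equation $\keane{u}=\Kmult{u}{\keane{u}}$, whereas the paper argues on the finite approximants $\Kpow{u}{n}$ and additionally uses associativity to record the dual identity $\keane{u}(ik^n+j)=\keane{u}(j)+u(i)$, which the lemma statement itself does not require.
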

\proof
Because of associativity of $\sKmult$
we have that 
$\Kmult{\ablk}{\Kpow{\ablk}{n}} = \Kmult{\Kpow{\ablk}{n}}{\ablk}$,
which means that we can view $\Kpow{\ablk}{n+1}$ as consisting of
$\length{\ablk}^n$ variants of $\ablk$, as well as 
$\length{\ablk}$ variants of $\Kpow{\ablk}{n}$, respectively:
\begin{align}
  & \Kpow{\ablk}{n+1}
    = \Kmult{\ablk}{\Kpow{\ablk}{n}}
    = (\ablk + \nth{\Kpow{\ablk}{n}}{0}) \cdots (\ablk + \nth{\Kpow{\ablk}{n}}{\length{u}^n-1})
    \label{eq:keane:left}
  \\
  & \Kpow{\ablk}{n+1}
    = \Kmult{\Kpow{\ablk}{n}}{\ablk}
    = (\Kpow{\ablk}{n} + \nth{\ablk}{0}) \cdots (\Kpow{\ablk}{n} + \nth{\ablk}{\length{\ablk}-1})
    \label{eq:keane:right}
\end{align}
  Let $0 \le i \lt k$ and $0 \le j \lt k^n$.
  Since we have $\Kpow{\ablk}{n} \sqsubseteq \keane{\ablk}$ for all $n \geq 0$,
  by the use of \eqref{eq:keane:left} and~\eqref{eq:keane:right}
  we conclude
  \begin{align*}
    \nth{\Kpow{\ablk}{n+1}}{(j\length{\ablk}+i} 
    & = \nth{(\ablk + \nth{\Kpow{\ablk}{n}}{j})}{i}
      = \nth{\Kpow{\ablk}{n}}{j} + \nth{\ablk}{i}
    \\
    \nth{\Kpow{\ablk}{n+1}}{i\length{\ablk}^n+j} 
    & = \nth{(\Kpow{\ablk}{n} + \nth{\ablk}{i})}{j} 
      = \nth{\Kpow{\ablk}{n}}{j} + \nth{\ablk}{i} 
  \qedhere
  \end{align*}

\begin{proposition}\label{prop:keane:additive}
  The only completely additive Keane word is the constant zero sequence.
\end{proposition}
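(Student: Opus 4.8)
The plan is to reduce the statement to the triviality results already available for completely additive sequences, by showing that a completely additive Keane word satisfies the hypothesis of Case~\ref{case:1} in the proof of Theorem~\ref{thm:AS}.

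So let $\keane{u}$ be a completely additive Keane word, where $u\in\wrd{\aalph}$ has $\nth{u}{0}=0$ and $k=\length{u}\ge 2$; I want to conclude that $\keane{u}$ is the constant zero sequence. First I would observe that complete additivity pins down the first gap letter of $u$: from $\nth{\keane{u}}{1}=\nth{\keane{u}}{1\cdot1}=\nth{\keane{u}}{1}+\nth{\keane{u}}{1}$ we get $\nth{\keane{u}}{1}=0$ in the group $\aalph$, and since $\keane{u}$ begins with $u$ and $k\ge 2$, this gives $\nth{u}{1}=\nth{\keane{u}}{1}=0$. Next I apply Lemma~\ref{lem:keane:nth} with $i=1$ (legitimate because $1<k$): for every $n\in\posnat$,
\[
  \nth{\keane{u}}{kn+1}=\nth{\keane{u}}{n}+\nth{u}{1}=\nth{\keane{u}}{n}.
\]
This is exactly the hypothesis of Case~\ref{case:1}, taken with $b=k$ and $c=0$, so that case yields that $\keane{u}$ is trivially additive, i.e.\ the constant zero sequence. (Equivalently: Lemma~\ref{lem:keane:nth} already records $\pair{1}{k}\in\AS{\keane{u}}$, and $1\ne k$ because $k\ge 2$; were $\keane{u}$ non-trivial, Theorem~\ref{thm:AS} would force $\AS{\keane{u}}$ to consist only of diagonal pairs $\pair{b}{b}$, a contradiction.)

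I do not anticipate a real obstacle: the entire content is the observation that complete additivity forces $\nth{u}{1}=0$, after which Lemma~\ref{lem:keane:nth} supplies the Case~\ref{case:1} hypothesis for free. If one preferred not to cite an internal case, the self-contained route is to reproduce its short multiplicative induction: from $\nth{\keane{u}}{kn+1}=\nth{\keane{u}}{n}$ together with complete additivity, prove by induction on $i\ge 1$ that $\nth{\keane{u}}{kn+i}=\nth{\keane{u}}{n}$ for all $n$ (the inductive step expands $\nth{\keane{u}}{(kn+i)(kn+1)}$ and refactors exactly as in the proof of Case~\ref{case:1}); taking $n=1$ then shows $\keane{u}$ is constant zero from position $k+1$ onward, and Lemma~\ref{lem:additive:periodic} concludes. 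The one bookkeeping point is the mild clash of index conventions ($\nat$-indexed in Section~\ref{sec:keane} versus $\posnat$-indexed in Theorem~\ref{thm:AS} and Case~\ref{case:1}); this is harmless, since $\nth{\keane{u}}{0}=0$, so discarding position $0$ leaves a genuinely completely additive sequence to which the cited results apply verbatim.
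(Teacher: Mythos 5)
Your reduction to Theorem~\ref{thm:AS} is the right instinct, but the first step rests on an indexing convention that is not the one the proposition uses, and under the intended convention that step fails. A Keane word is indexed over $\nat$ while Definition~\ref{def:additive} concerns $\posnat$-indexed sequences, so ``completely additive Keane word'' requires a translation; the paper's proof fixes it explicitly: additivity of $\astr=\keane{u}$ means $\nth{\astr}{nm-1}=\nth{\astr}{n-1}+\nth{\astr}{m-1}$ for all $n,m>0$, i.e.\ position $n$ of $\astr$ plays the role of the integer $n+1$. Under that reading your identity $\nth{\astr}{1}=\nth{\astr}{1\cdot 1}=\nth{\astr}{1}+\nth{\astr}{1}$ is not an instance of additivity: taking $n=m=1$ only yields $\nth{\astr}{0}=0$, which is already known, and $\nth{\astr}{1}=u_1$ is the value of the associated additive function at the integer $2$, which has no a priori reason to vanish (compare $\PD$, a completely additive sequence with value $1$ at position $2$). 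So your ``mild clash of index conventions'' is in fact the crux: discarding position $0$ and shifting indices by one are different operations, and only the latter produces the completely additive sequence the proposition is about. As written, you prove the statement for the other convention, which is essentially vacuous unless $u_1=0$.

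The strategy is repairable, and then it genuinely differs from the paper's proof. Set $\bstr\in\strp{\aalph}$, $\bstr(n)=\nth{\astr}{n-1}$, which is completely additive by hypothesis. Lemma~\ref{lem:keane:nth} with $i=0$ gives $\nth{\astr}{kn}=\nth{\astr}{n}+u_0=\nth{\astr}{n}$, i.e.\ $\bstr(1+k(m-1))=\bstr(m)$ for all $m\ge1$, so $\pair{1}{k}\in\AS{\bstr}$ with $1\ne k$ since $k\ge2$, and Theorem~\ref{thm:AS} forces $\bstr$ to be trivial. (Equivalently, $i=k-2$ yields $\bstr(km-1)=\bstr(m)+u_{k-2}$, which is exactly the situation handled in Case~\ref{case:2} of that proof, not Case~\ref{case:1}.) The paper instead argues directly: it combines $\asub{\astr}{k}{k+1}\similarx\astr$ with the recurrences of Lemma~\ref{lem:keane:nth} to get $u_1=\dots=u_{k-1}$ and then derives $u_1=u_1+u_1$ by a small case analysis on $k$, which keeps the proposition independent of the Hare--Coons theorem; your route, once the shift is done correctly, is shorter but leans on that heavier result.
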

\begin{proof}
  Let $u= u_0 u_1 \ldots u_{k-1} \in \wrd{\aalph}$ 
  with $u_0 = 0$ and $k \ge 2$,
  and assume that $\astr = \keane{u} \in \str{\aalph}$ is additive
  (note that, as $\astr$ is indexed $0,1,\ldots$, additivity of $\astr$ means 
  $\nth{\astr}{nm-1} = \nth{\astr}{n-1} + \nth{\astr}{m-1}$ for all $n,m > 0$).
  Then $\asub{\astr}{k}{k+1} = \astr + \nth{\astr}{k}$, that is,
  \begin{align}
    \nth{\asub{\astr}{k}{k+1}}{n}
    = \nth{\astr}{k+(k+1)n} 
    = \nth{\astr}{k(n+1)+n} 
    = \nth{\astr}{n} + \nth{\astr}{k}
    \dquad (n\in\nat) 
    \label{eq:prop:keane:additive:1}
  \end{align}
  Using the recurrence relations for Keane words (Lemma~\ref{lem:keane:nth})
  we obtain
  \begin{align}
    \nth{\astr}{k(i+1)+i} = \nth{\astr}{i+1} + u_i
    \dquad (0 \le i \lt k)
    \label{eq:prop:keane:additive:2}
  \end{align}
  Combining \eqref{eq:prop:keane:additive:1} and \eqref{eq:prop:keane:additive:2}
  gives $\nth{\astr}{i+1} = \nth{\astr}{k}$
  for all $0 \le i \lt k$.
  Hence $u_1 = u_2 = \ldots = u_{k-1}$.

  To see that $u_i = 0$ for all $0 \le i < k$,
  we show $u_1 = u_1 + u_1$ in each of the following three cases:
  For $k \ge 4$ this is immediate: $u_3=u_1+u_1$ follows from additivity of $\astr$.
  For $k=3$ we get $\nth{\astr}{3} = u_0 + u_1 = u_1$ from equation~\eqref{eq:prop:keane:additive:2}
  and $\nth{\astr}{3} = u_1 + u_1$ by additivity.
  Finally, for $k=2$, we get $\nth{\astr}{8} = u_1$ and $\nth{\astr}{2} = u_1$,
  since by Lemma~\ref{lem:keane:nth} we have 
  $\nth{\keane{v}}{\ell^n} = v_1$ for all blocks~$v = v_0 v_1 \ldots v_{\ell}$ and $n\in\nat$.
  Moreover, by additivity, $\nth{\astr}{8} = \nth{\astr}{2} + \nth{\astr}{2} = u_1 + u_1$.
\end{proof}

In what follows we let $(n)_k$ denote the base \hyph{$k$}{expansion} of $n$,
and $\nrofin{a}{w}$ the number of occurrences of a letter $a\in \aalph$ in a word $w\in \wrd{\aalph}$.
\begin{theorem}\label{thm:keane:digits}
  Let $u = u_0 u_1 \ldots u_{k-1} \in \wrd{\aalph}$ 
  be a $k$-block with $u_0 = 0$ and $k \geq 2$.
  Then
  \[
    \keane{u}(n) 
    = \summ{i \in \nalph{k}}{}{\gtimes{\nrofin{i}{(n)_k}}{u_i}}
    \dquad (n \geq 0)
  \]
\end{theorem}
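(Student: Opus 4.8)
The plan is to induct on $n$, using the Keane-word recurrence of Lemma~\ref{lem:keane:nth} together with the recursive structure of base-$k$ expansions. Throughout, the index $j$ in the summation ranges over $\nalph{k}$, and I adopt the convention that $(0)_k = \wrdemp$; this is the choice that makes the bookkeeping below uniform. The base case $n = 0$ is immediate: then $\nrofin{j}{(0)_k} = 0$ for every $j$, so the right-hand side equals $0$, while $\keane{u}(0) = u_0 = 0$ since $\keane{u}$ begins with the block $u$.

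For the inductive step, take $n \ge 1$ and write $n = mk + i$ with $0 \le i < k$ and $m = \lfloor n/k \rfloor$. Since $k \ge 2$ we have $m < n$, so the induction hypothesis applies to $m$. The key observation is that the base-$k$ expansion of $n$ is that of $m$ with the least-significant digit $i$ appended, i.e.\ $(mk+i)_k = (m)_k\,i$ (here the convention $(0)_k = \wrdemp$ handles the case $m = 0$, $i \ne i$... the case $m = 0$, $i \ne 0$); hence $\nrofin{i}{(n)_k} = \nrofin{i}{(m)_k} + 1$ and $\nrofin{j}{(n)_k} = \nrofin{j}{(m)_k}$ for every $j \ne i$. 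Now Lemma~\ref{lem:keane:nth} gives $\keane{u}(n) = \keane{u}(m) + u_i$, and substituting the induction hypothesis for $\keane{u}(m)$ yields
\[
  \keane{u}(n)
  = \bigl( \summ{j\in\nalph{k}}{}{\gtimes{\nrofin{j}{(m)_k}}{u_j}} \bigr) + u_i
  = \summ{j\in\nalph{k}}{}{\gtimes{\nrofin{j}{(n)_k}}{u_j}} ,
\]
where the last equality absorbs the term $+\,u_i$ into the $i$th summand via $\gtimes{(t+1)}{u_i} = (\gtimes{t}{u_i}) + u_i$ and leaves all summands with $j \ne i$ unchanged. This completes the induction.

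I do not expect a genuine obstacle; the argument is a short induction, and the only point demanding a little care is the treatment of $0$: one must fix $(0)_k = \wrdemp$ so that the identity $(mk+i)_k = (m)_k\,i$ holds for all $n = mk+i \ge 1$, including the degenerate case where $(m)_k$ is empty. (Alternatively, the formula can be read off the $\length{u}$-uniform morphism $h(a) = u + a$ whose limit fixed point is $\keane{u}$, by tracking which shifts $+\,u_j$ are applied along the sequence of base-$k$ digits of $n$; but going through Lemma~\ref{lem:keane:nth} is the most economical route.)
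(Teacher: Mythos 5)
Your proof is correct and follows essentially the same route as the paper's: both arguments reduce $\keane{u}(mk+i)$ to $\keane{u}(m)+u_i$ via Lemma~\ref{lem:keane:nth} and induct along the base-$k$ digits (the paper inducts on the length of $(n)_k$, you on $n$ itself, which is an immaterial difference). The only divergence is the convention for $(0)_k$ — the paper treats it as the single digit $0$, you as the empty word — and since $u_0 = 0$ both give the same value on the right-hand side, so either works.
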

\proof
We prove 
\(
  \myall{n\in\nat}{
    \length{(n)_k} = r 
    \implies 
    \nth{\keane{\ablk}}{n} = \summ{i \in \nalph{k}}{}{\gtimes{\nrofin{i}{(n)_k}}{\ablk_i}}
  }
\)
by induction on $r \in \posnat$.
If $r=1$, then $n\in\nalph{k}$ and $\nth{\keane{\ablk}}{n} = u_n$.
For $r > 1$, 
let $(n)_k = n_{r-1} n_{r-2} \ldots n_{1} n_{0}$
and define $n' = \frac{n - n_0}{k}$.
Then $(n')_k = n_{r-1} n_{r-2} \ldots n_{1}$ and $\length{(n')_k} = r-1$,
and we find
\begin{align*}
  \nth{\keane{\ablk}}{n}
  & = \nth{\keane{\ablk}}{n'k+n_0} \\
  & = \nth{\keane{\ablk}}{n'} + \ablk_{n_0} && \text{(Lemma~\ref{lem:keane:nth})} \\
  & \stackrel{\text{IH}}{=} \left(\summ{i \in \nalph{k}}{}{\gtimes{\nrofin{i}{(n')_k}}{\ablk_i}}\right) + \ablk_{n_0} \\
  & = \summ{i \in \nalph{k}}{}{\gtimes{\nrofin{i}{(n)_k}}{\ablk_i}}
  && \qedhere
\end{align*}

\begin{example}
  For the Morse sequence $\morse = \keane{01} = 01 \times 01 \times \cdots$, 
  Theorem~\ref{thm:keane:digits} gives another well-known definition of~$\morse$,
  due to J.H.~Conway~\cite{conw:1976}:
  $\nth{\morse}{n}$ is the parity of the number of $1$s in the binary expansion of $n$.
  Likewise, for the generalized Morse sequence $\mephisto = \keane{001}$ discussed in~\cite{kean:1968,jaco:1992},
  and called the `Mephisto Waltz' in~\cite[p.~105]{jaco:1992}, 
  we find that $\nth{\mephisto}{n}$
  is the parity of the number of $2$s in the ternary expansion of $n$.
\end{example}

Let $x = x_0 x_1 \ldots x_{k-1}$ 
with $k \ge 1$ be a word over $\aalph$.
The \emph{first difference}~$\diff{x}$ of $x$
is defined by $\diff{x} = \wrdemp$ if $\length{x} = 1$ 
and $\diff{x} = (x_1 - x_0)(x_2 - x_1) \ldots (x_{k-1} - x_{k-2})$,
otherwise.

We give an embedding of the Keane monoid (Prop.~\ref{prop:keane:monoid})
into the monoid of Toeplitz pattern composition (Prop.~\ref{prop:Tpat:monoid}).

Let $\mcl{B} = \displset{u\in\newrd{\aalph}}{u_0 = 0}$,
and define the map $\sTdiff \funin \mcl{B} \to \wrd{(\galph{\aalph})}$ by
\begin{align*}
  \Tdiff{u} & = \diff{u} \rotgap{d} 
  & 
  \text{for all $u \in \mcl{B}$ and $d = u_0 - u_{\length{u}-1} = - u_{\length{u}-1}$}
\end{align*}
\begin{theorem}\label{thm:Tdiff:embedding}
  \( 
    \sTdiff 
    \funin
    \triple{\newrd{\aalph}}{\sKmult}{0} 
    \hookrightarrow 
    \triple{\newrd{(\galph{\aalph})}}{\sTcmp}{\gap}
  \)
  is an injective homomorphism:
  \begin{align*}
     \Tdiff{0} & = \gap &
     \Tdiff{\Kmult{u}{v}} & = \Tcomp{\Tdiff{u}}{\Tdiff{v}}
  \end{align*}
\end{theorem}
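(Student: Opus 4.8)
The plan is to prove the three claims in sequence: $\sTdiff$ is well-defined into $\newrd{(\galph{\aalph})}$, it satisfies the two homomorphism identities, and it is injective. Well-definedness is immediate: for $u \in \mcl{B}$ with $\length{u} = k \ge 2$, the difference word $\diff{u}$ has length $k-1$ and contains only letters of $\aalph$, so $\Tdiff{u} = \diff{u}\,\rotgap{d}$ is a one-gap $\aalph$-pattern of length $k$, hence lies in $\newrd{(\galph{\aalph})}$. The identity $\Tdiff{0} = \gap$ holds because $0$ denotes the length-$1$ block with entry $0$, so $\diff{0} = \wrdemp$ and $d = -0 = 0$, giving $\Tdiff{0} = \rotgap{0} = \gap$, which is the identity of the Toeplitz-composition monoid.

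For the main identity $\Tdiff{\Kmult{u}{v}} = \Tcomp{\Tdiff{u}}{\Tdiff{v}}$, write $\length{u} = k$, $\length{v} = \ell$, so $\Kmult{u}{v}$ has length $k\ell$, is the concatenation of the blocks $u + v_0, u + v_1, \ldots, u + v_{\ell-1}$, and $\Tdiff{\Kmult{u}{v}}$ has length $k\ell$ with its single gap at the end. On the right-hand side, $\Tdiff{u} = \diff{u}\,\rotgap{d_u}$ has one gap and length $k$, and $\Tdiff{v} = \diff{v}\,\rotgap{d_v}$ has length $\ell$; since $\gapnr{\Tdiff{u}} = 1$, the composition uses $d_1 = \gscale{1}{\ell} = \ell$ and $d_2 = \gscale{\ell}{1} = 1$, so $\Tcomp{\Tdiff{u}}{\Tdiff{v}} = \Tsubst{(\Tdiff{u})^{\ell}}{\Tdiff{v}}$, a word of length $k\ell$ obtained by filling the $\ell$ gaps of $(\diff{u}\,\rotgap{d_u})^{\ell}$ with the letters of $\diff{v}\,\rotgap{d_v}$ in order. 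I would then compare the two words position by position. For a position inside the $j$th copy ($0 \le j \le \ell-1$) that is \emph{not} the last position of that copy, the left side gives $\nth{\diff{\Kmult{u}{v}}}{\,\cdot\,} = (u_{i+1} + v_j) - (u_i + v_j) = u_{i+1} - u_i = \nth{\diff{u}}{i}$, and the right side gives exactly $\nth{\diff{u}}{i}$ from the non-gap part of the $j$th copy of $\Tdiff{u}$ — these match, and crucially the $v_j$ cancels, which is why a plain rotation gap suffices. For the last position of the $j$th copy with $j < \ell-1$: the left side is $(u_0 + v_{j+1}) - (u_{k-1} + v_j) = d_u + (v_{j+1} - v_j) = d_u + \nth{\diff{v}}{j}$; the right side fills that gap $\rotgap{d_u}$ with the letter $\nth{\diff{v}}{j}$ of $\diff{v}$, yielding $\nth{\diff{v}}{j} + d_u$ — match. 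For the very last position of the whole word: the left side is $d_{\Kmult{u}{v}} = -(\Kmult{u}{v})_{k\ell-1} = -(u_{k-1} + v_{\ell-1})$, while the right side fills the last gap $\rotgap{d_u}$ with $\rotgap{d_v}$, giving the rotation $\rotgap{d_u + d_v}$, i.e.\ $d_u + d_v = -u_{k-1} - v_{\ell-1}$ — match (here I use the convention from the excerpt that the final gap, when it maps the surjective image to a rotation, is identified with $\rotgap{d}$, and that composing $\rotgap{d_u} \circ \rotgap{d_v} = \rotgap{d_u + d_v}$). Collecting these cases gives the identity.

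Injectivity then follows because the $\sKmult$-unit $0$ and the homomorphism identities make $\sTdiff$ a monoid homomorphism, but more directly: from $\Tdiff{u} = \diff{u}\,\rotgap{d}$ one recovers $u$ uniquely from $\diff{u}$ by prefixing $u_0 = 0$ and taking partial sums, so $\Tdiff{u} = \Tdiff{u'}$ forces $\diff{u} = \diff{u'}$ and hence $u = u'$. The main obstacle is purely bookkeeping: getting the index arithmetic in the position-by-position comparison exactly right, in particular tracking which of the $\ell$ gaps of $(\Tdiff{u})^{\ell}$ receives which letter of $\Tdiff{v}$ and verifying that the additive constants $v_j$ introduced by the $j$th shifted copy of $u$ cancel in all the "interior" difference positions — this cancellation is the conceptual heart of why the first difference of a Keane word is a Toeplitz word, and it is what must be checked cleanly. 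A short lemma stating $\diff{u + a} = \diff{u}$ for constants $a \in \aalph$ would streamline this.
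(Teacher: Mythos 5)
Your proposal is correct, and it takes a genuinely different route from the paper's proof. The paper proves the identity $\Tdiff{\Kmult{u}{v}} = \Tcomp{\Tdiff{u}}{\Tdiff{v}}$ by induction on $\length{v}$, and to make the induction hypothesis strong enough it must first generalize the statement to a parametrized family of maps $x \mapsto \diff{x}\,\rotgap{c}$ (with the trailing rotation $c$ decoupled from $x$), since peeling a letter off $v$ changes the rotation that the recursive call must carry. You instead compute both sides globally: you use the explicit description of $\Kmult{u}{v}$ as the concatenation $(u+v_0)(u+v_1)\cdots(u+v_{\ell-1})$, observe that $\gscale{1}{\ell}=\ell$ and $\gscale{\ell}{1}=1$ so that $\Tcomp{\Tdiff{u}}{\Tdiff{v}} = \Tsubst{(\Tdiff{u})^{\ell}}{\Tdiff{v}}$, and match the two length-$k\ell$ words position by position in three cases (interior of a copy, where the shift $v_j$ cancels because $\diff{u+a}=\diff{u}$; boundary between copies $j$ and $j+1$, giving $d_u + \nth{\diff{v}}{j}$ on both sides; and the final position, where filling $\rotgap{d_u}$ with $\rotgap{d_v}$ yields $\rotgap{d_u+d_v} = \rotgap{-(u_{k-1}+v_{\ell-1})}$). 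All three computations check out, and your injectivity argument via partial-sum reconstruction is the same retraction the paper uses. What the direct computation buys is that it avoids the auxiliary generalization entirely and makes the conceptual point — the cancellation of the per-copy constants under first differences — visible in one place; what the paper's induction buys is that it stays entirely inside the recursive definitions of $\sKmult$ and $\sTcmp$ without needing the closed-form description of $\Kmult{u}{v}$ or of $\Tsubst{(\Tdiff{u})^{\ell}}{\Tdiff{v}}$. To finish your version cleanly you would only need to fix the index conventions once (the paper indexes $\diff{x}$ and patterns slightly differently in different places) and state the lemma $\diff{x+a}=\diff{x}$ explicitly, as you yourself suggest.
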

\begin{proof}
  \newcommand{\sTdiffa}[1]{\Delta_{\stoeplitz,#1}} 
  \newcommand{\Tdiffa}[1]{\funap{\sTdiffa{#1}}}
  We first show that $\sTdiff$ preserves structure. 
  The preservation of identity is immediate.
  For $c\in\aalph$ let $\sTdiffa{c} \funin \newrd{\aalph} \to \newrd{(\galph{\aalph})}$
  be defined by $\Tdiffa{c}{x} = \diff{x} \, \rotgap{c}$ for all $x\in\newrd{\aalph}$
  (so that $\Tdiff{x} = \Tdiffa{c}{x}$ iff $c = - x_{\length{x}-1}$, for all $x\in\mcl{B}$).
  We prove the following equation, for all $u,v\in\newrd{\aalph}$ and $d\in\aalph$:
  \[
    \Tdiffa{d'}{\Kmult{\ablk}{\bblk}} 
    = \Tcomp{\Tdiff{\ablk}}{\Tdiffa{d}{\bblk}}
     \dquad 
     \text{where $d' = d_u + d$ and $d_u = \ablk_0 - \ablk_{\length{\ablk} - 1}$,}
  \]
  by induction on the length of $\bblk$.
  The claim 
  then follows by taking $d = d_v = v_0 - v_{\length{v}-1} = - v_{\length{v}-1}$
  since $\Tdiff{\Kmult{\ablk}{\bblk}} = \Tdiffa{d_u+d_v}{\Kmult{\ablk}{\bblk}}$.

  If $\length{\bblk}  = 1$, then $\bblk = a$ for some $a \in \aalph$, 
  and 
  \begin{align*}
    \Tdiffa{d'}{\Kmult{\ablk}{a}} 
    & = \Tdiffa{d'}{\ablk+a} \\
    & = \diff{\ablk+a} \rotgap{d'} \\
    & = \diff{\ablk} \rotgap{d'} \\
    & = \Tcomp{(\diff{\ablk}\rotgap{d_u})}{\rotgap{d}} \\
    & = \Tcomp{\Tdiff{\ablk}}{\Tdiffa{d}{a}} 
  \end{align*}
  
  If $\length{\bblk}  > 1$, then $\bblk = a a' \bblk'$ for some $a,a' \in \aalph$
  and $\bblk'\in\wrd{\aalph}$, and we find
  \begin{align}
    \Tdiffa{d'}{\Kmult{\ablk}{aa'\bblk'}}
    & = \Tdiffa{d'}{(\ablk+a)(\Kmult{\ablk}{a'\bblk'})} \notag \\
    & = \diff{(\ablk+a)(\Kmult{\ablk}{a'\bblk'})}\rotgap{d'} \notag \\
    & = \diff{\ablk+a} (\ablk_0 + a' - (\ablk_{\length{u}-1} + a)) \diff{\Kmult{\ablk}{a'\bblk'}} \rotgap{d'} \label{eq:aa} \\
    & = \diff{\ablk} (d_u + a' - a) \Tdiffa{d'}{\Kmult{\ablk}{a'\bblk'}} \notag \\
    & = \diff{\ablk} (d_u + a' - a) (\Tcomp{\Tdiff{\ablk}}{\Tdiffa{d}{a'\bblk'}}) \tag{\text{IH}} \\
    & = \Tcomp{(\diff{\ablk}\rotgap{d_u})}{((a' - a)\Tdiffa{d}{a'\bblk'})} \notag \\
    & = \Tcomp{\Tdiff{\ablk}}{\Tdiffa{d}{aa'\bblk'}} \notag
  \end{align}
  where \eqref{eq:aa} follows from
  $\diff{xy} = \diff{x} (y_0 - x_{\length{x}-1}) \diff{y}$ for all $x,y\in\newrd{\aalph}$,
  and $\nth{(\Kmult{\ablk}{a'\bblk'})}{0} = \ablk_0 + a'$.

  Finally, we show that $\sTdiff$ is injective by defining a retraction 
  $\sTsum \funin \Tdiff{\mcl{B}} \to \mcl{B}$.
  Let $\apat$ be a $\aalph$-pattern $P\in\Tdiff{\mcl{B}}$.
  Then $\apat$ has the form 
  $P = a_1 a_2 \ldots a_n$ where
  $a_i \in \aalph$ for $1 \le i \lt n$ and 
  $a_n = \rotgap{d}$ with $d = - \sum_{1 \le i \lt n} a_i$.
  Define $\Tsum{P} = \ablk_0 \ablk_1 \ldots \ablk_{n-1}$ by 
  $\ablk_0 = 0$, $\ablk_i = \ablk_{i-1} + a_i$ ($1 \le i \le n-1$).

  We show that $\sTsum$ is a left-inverse of $\sTdiff$.
  Let $u = \ablk_0 \ablk_1 \ldots \ablk_{n-1} \in \mcl{B}$.
  Then $\Tdiff{\ablk} = \diff{\ablk} \rotgap{d}$ 
  with $d = - \ablk_{n-1}$.
  Let $\diff{\ablk} = a_1 a_2 \ldots a_{n-1}$.
  Then $a_i = \ablk_i - \ablk_{i-1}$ ($1 \le i \lt n$)
  and so $\Tsum{\Tdiff{u}} = u$.
\end{proof}

It follows that the first difference sequence of a Keane word is a Toeplitz word.
Here the difference operator $\sdiff$ is extended to 
$\llist{\aalph}\to\llist{\aalph}$ in the obvious way.
\begin{theorem}\label{thm:diff:keane}
  $\diff{\keane{u}} = \toeplitz{\Tdiff{u}}$, 
  for all blocks $u\in\mcl{B}$. 
\end{theorem}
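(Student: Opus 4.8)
The plan is to lift the monoid embedding of Theorem~\ref{thm:Tdiff:embedding} from finite words to the generated infinite words, and then match up the two limit processes defining $\toeplitz{\Tdiff{u}}$ and $\diff{\keane{u}}$. Throughout, write $k = \length{u} \ge 2$.

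First I would show, by induction on $n \ge 0$, that $\Tpow{\Tdiff{u}}{n} = \Tdiff{\Kpow{u}{n}}$. The base case is $\Tpow{\Tdiff{u}}{0} = \gap = \Tdiff{0} = \Tdiff{\Kpow{u}{0}}$, and for the induction step
\[
  \Tpow{\Tdiff{u}}{n+1} = \Tcomp{\Tdiff{u}}{\Tpow{\Tdiff{u}}{n}}
    \stackrel{\text{IH}}{=} \Tcomp{\Tdiff{u}}{\Tdiff{\Kpow{u}{n}}}
    = \Tdiff{\Kmult{u}{\Kpow{u}{n}}} = \Tdiff{\Kpow{u}{n+1}},
\]
using $\Tdiff{0} = \gap$ and $\Tdiff{\Kmult{u}{v}} = \Tcomp{\Tdiff{u}}{\Tdiff{v}}$ from Theorem~\ref{thm:Tdiff:embedding}. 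By the very definition of $\sTdiff$, the word $\Tdiff{\Kpow{u}{n}} = \diff{\Kpow{u}{n}}\,\rotgap{d_n}$ has exactly one gap, namely $\rotgap{d_n}$ at its last position, where $d_n = -(\Kpow{u}{n})_{k^n-1}$; equivalently the $n$th Toeplitz generation is
\[
  \toeplitzi{n}{\Tdiff{u}} = (\Tpow{\Tdiff{u}}{n})^{\omega} = \bigl(\diff{\Kpow{u}{n}}\,\rotgap{d_n}\bigr)^{\omega},
\]
so its first period is $\diff{\Kpow{u}{n}}$ (of length $k^n-1$) followed by a single trailing gap.

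Next I would compare the two sequences position by position. On the Keane side we have $\Kpow{u}{n} \sqsubseteq \Kpow{u}{n+1} \sqsubseteq \keane{u}$ (already observed in the proof of Lemma~\ref{lem:keane:nth}), and taking first differences preserves the prefix relation, so $\diff{\Kpow{u}{n}} \sqsubseteq \diff{\Kpow{u}{n+1}} \sqsubseteq \diff{\keane{u}}$; since $\length{\diff{\Kpow{u}{n}}} = k^n-1 \to \infty$, this gives $\nth{\diff{\keane{u}}}{p} = \nth{\diff{\Kpow{u}{n}}}{p}$ for every position $p$ as soon as $k^n > p$. On the Toeplitz side, when $k^n > p$ the position $p$ lies strictly inside the non-gap part $\diff{\Kpow{u}{n}}$ of the first period of $\toeplitzi{n}{\Tdiff{u}}$, hence $\nth{\toeplitzi{n}{\Tdiff{u}}}{p} = \nth{\diff{\Kpow{u}{n}}}{p} \in \aalph$, and by the prefix chain this letter no longer changes for larger $n$, so it equals the limit value $\nth{\toeplitz{\Tdiff{u}}}{p}$. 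Combining the two identities yields $\nth{\toeplitz{\Tdiff{u}}}{p} = \nth{\diff{\keane{u}}}{p}$ for every $p$, which is the claim.

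I expect the only genuinely delicate point to be the bookkeeping around the limit: one must be sure that $\Tpow{\Tdiff{u}}{n}$ keeps its gap at the very end (here immediate, since it equals $\Tdiff{\Kpow{u}{n}}$ and $\sTdiff$ always produces a trailing gap), and that the indexing of $\diff{\keane{u}}$ inherited from the Keane word lines up with the $1$-indexing of Toeplitz words used in Lemma~\ref{lem:toeplitz:recurrence}, i.e.\ $\nth{\diff{\keane{u}}}{n} = \nth{\keane{u}}{n} - \nth{\keane{u}}{n-1}$. As a cross-check, and an alternative limit-free route, one can instead verify directly with Lemma~\ref{lem:keane:nth} that $\diff{\keane{u}}$ satisfies the Toeplitz recurrence of $\Tdiff{u}$ from Lemma~\ref{lem:toeplitz:recurrence} --- namely $\nth{\diff{\keane{u}}}{kn+i} = u_i - u_{i-1}$ for $1 \le i \le k-1$ and $\nth{\diff{\keane{u}}}{kn} = \nth{\diff{\keane{u}}}{n} - u_{k-1}$ --- and then observe that this recurrence determines a totally defined $\aalph$-sequence uniquely by induction on the index, so $\diff{\keane{u}}$ must coincide with $\toeplitz{\Tdiff{u}}$.
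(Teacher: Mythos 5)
Your proposal is correct, but it takes a genuinely different route from the paper. The paper's proof computes $\nth{\diff{\keane{u}}}{n}$ in closed form from the sum-of-digits characterization (Theorem~\ref{thm:keane:digits}), by comparing the base-$k$ expansions of $n$ and $n-1$, and then observes that the resulting recurrence is exactly the one Lemma~\ref{lem:toeplitz:recurrence} assigns to the pattern $\Tdiff{u}$. You instead treat the theorem as a limit of the finite-word statement: the induction $\Tpow{\Tdiff{u}}{n} = \Tdiff{\Kpow{u}{n}}$ is an immediate consequence of the homomorphism property of Theorem~\ref{thm:Tdiff:embedding}, and the identification $\toeplitzi{n}{\Tdiff{u}} = (\diff{\Kpow{u}{n}}\,\rotgap{d_n})^{\omega}$ together with the prefix chains $\Kpow{u}{n} \prefix \keane{u}$ and $\diff{\Kpow{u}{n}} \prefix \diff{\Kpow{u}{n+1}}$ lets the two limits be matched position by position. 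This makes the theorem a near-corollary of the embedding theorem and explains structurally \emph{why} it holds (both infinite words are limits of the same tower of finite objects under $\sTdiff$), at the cost of some bookkeeping about which positions of each Toeplitz generation are already stabilized; the paper's computation is shorter but leaves the role of Theorem~\ref{thm:Tdiff:embedding} invisible. Your closing ``cross-check'' --- deriving the recurrence $\nth{\diff{\keane{u}}}{kn+i} = u_i - u_{i-1}$ and $\nth{\diff{\keane{u}}}{kn} = \nth{\diff{\keane{u}}}{n} - u_{k-1}$ from Lemma~\ref{lem:keane:nth} and invoking uniqueness of the solution of the recurrence --- is essentially the paper's argument with Lemma~\ref{lem:keane:nth} in place of Theorem~\ref{thm:keane:digits}, and is also sound; you are right that the uniqueness step (each index reduces to a strictly smaller one) deserves to be said explicitly, as the paper glosses over it.
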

\begin{proof}
  Let $u = u_0 u_1 \ldots u_{k-1}$ be a $k$-block over $\aalph$ with $u_0 = 0$ and $k \ge 2$.
  Let $n \in \posnat$ be a positive integer
  and $(n)_k = n_{r-1} n_{r-2} \ldots n_0$ the base $k$-expansion of $n$.
  
  The first nonzero digit of $(n)_k$ (reading from right to left) 
  is $n_a$ with $a = \padval{k}{n}$
  and so
  \[
    \begin{array}{rccccccccc}
      (n)_k   & = & n_{r-1} & n_{r-2} & \ldots & n_a     & 0     & 0     & \ldots & 0 \\
      (n-1)_k & = & n_{r-1} & n_{r-2} & \ldots & (n_a-1) & (k-1) & (k-1) & \ldots & (k-1)
    \end{array}
  \]
  Let $\kappa = \keane{u}$.
  From Theorem~\ref{thm:keane:digits} we then obtain 
  \begin{align*}
    \nth{\diff{\kappa}}{n} 
    & = \nth{\kappa}{n} - \nth{\kappa}{n-1} \\
    & = ( u_{n_a} + a \cdot u_0 ) - ( u_{n_a - 1} + a \cdot u_{k-1} ) \\
    & =   u_{n_a} - u_{n_a - 1} - a \cdot u_{k-1}
  \end{align*}
  whence 
  \begin{align*}
    \nth{\diff{\kappa}}{n} 
    &=
    \begin{cases}
      u_1 - u_0 & \text{if $n \equiv 1 \pmod{k}$} \\
      u_2 - u_1 & \text{if $n \equiv 2 \pmod{k}$} \\
      \vdots & \vdots \\
      u_{k-1} - u_{k-2} & \text{if $n \equiv k-1 \pmod{k}$} \\
       \nth{\diff{\kappa}}{n/k} - u_{k-1} & \text{if $n \equiv 0 \pmod{k}$} 
    \end{cases}
  \end{align*}
  From Lemma~\ref{lem:toeplitz:recurrence} we infer that 
  this is exactly the recurrence equation of the Toeplitz word 
  generated by the pattern $\Tdiff{u} = \diff{u} \gap^{-u_{k-1}}$.
\end{proof}

\begin{example}
  The first difference of the Morse sequence~$\morse$
  is the conjugate of the \pd\ sequence~$\PD$:
  \(
    \diff{\morse} 
    = \diff{\keane{01}} 
    = \toeplitz{\cat{\diff{01}}{\rotgap{1}}}
    = \toeplitz{\cat{1}{\rotgap{1}}} 
    = \PD+1
  \).
  
  The first difference of the Mephisto Waltz~$\mephisto$~\cite{kean:1968}:  
  \(
    \diff{\mephisto} 
    = \diff{\keane{001}} 
    = \toeplitz{\diff{001}{\rotgap{1}}} 
    = \toeplitz{\cat{01}{\rotgap{1}}} 
  \)
  turns out to be the sequence of turns (folds) of the alternate Terdragon curve~\cite{davi:knut:1970}\,!
  Moreover we also know $\diff{\mephisto} = \div{\sierpinski}{2}$, 
  where $\sierpinski = \toeplitz{00\rotgap{1} 11\rotgap{1}}$ is the 
  sequence of turns of the \Sierpinski{} curve~\cite{endr:hend:klop:2009,endr:hend:klop:2011}\,!
  Note that $\toeplitz{\cat{01}{\rotgap{1}}}$ 
  is the additive sequence $\pgs{\residues{2}{3}\cup\{3\}}$ 
  derived from $\dlog{3,2}$, see the construction in Section~\ref{sec:toeplitz:additive}.
\end{example}

We conclude with a complete characterization of the arithmetic self-similarity of the Thue--Morse sequence~$\morse$, 
and leave the characterization of the entire class of Keane words 
as future work. 
We know that at least $\pair{i}{\length{u}^n} \in \AS{\keane{u}}$ 
for all $n\ge 0$ and $0 \le i \lt \length{u}^n$ by Lemma~\ref{lem:keane:nth}.

\subsection*{The Arithmetic Self-Similarity of the \thuemorse{} Sequence} 

\begin{theorem}\label{thm:AS:morse}
  \( \AS{\morse} = \displset{\pair{a}{b}}{0 \leq a \lt b = 2^m \text{ for some $m \geq 0$}} \)\,.
\end{theorem}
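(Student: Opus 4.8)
The plan is to prove the two inclusions separately; the inclusion $\supseteq$ is immediate, and the inclusion $\subseteq$ is by strong induction on~$b$.

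\smallskip
\noindent\emph{The inclusion $\supseteq$.} Since $\morse = \keane{01}$, Lemma~\ref{lem:keane:nth} with $u = 01$ and $k=2$ gives $\pair{i}{2^m}\in\AS{\morse}$ for all $m\ge 0$ and $0\le i<2^m$ (with shift constant $\morse(i)$). Hence $\AS{\morse}\supseteq\displset{\pair{a}{b}}{0\le a<b=2^m\text{ for some }m\ge 0}$.

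\smallskip
\noindent\emph{The inclusion $\subseteq$: set-up, base case, and even $b$.} Assume $\pair{a}{b}\in\AS{\morse}$ and fix $c\in\nalph{2}$ with $\morse(a+bn)=\morse(n)+c$ for all $n\ge 0$; taking $n=0$ shows $c=\morse(a)$. Throughout I use the two recurrences $\morse(2n)=\morse(n)$ and $\morse(2n+1)=\morse(n)+1$ (special case of Lemma~\ref{lem:keane:nth}). I induct on $b$. If $b=1$ and $a\ge 1$, then Lemma~\ref{lem:suffix:in:AS:periodic} would make $\morse$ periodic, contradicting the (well-known) aperiodicity of the Thue--Morse word; so $a=0<1=2^0$. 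If $b=2b'$ is even, substitute according to the parity of $a$: for $a=2a'$ one gets $\morse(a'+b'n)=\morse(n)+c$, i.e.\ $\pair{a'}{b'}\in\AS{\morse}$, so by the induction hypothesis $b'=2^{m-1}$ and $0\le a'<b'$, whence $b=2^m$ and $0\le a=2a'<b$; for $a=2a'+1$ one gets $\morse(a'+b'n)+1=\morse(n)+c$, i.e.\ again $\pair{a'}{b'}\in\AS{\morse}$, so $b'=2^{m-1}$, $0\le a'<b'$, and $b=2^m$, $1\le a=2a'+1\le 2b'-1<b$.

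\smallskip
\noindent\emph{The inclusion $\subseteq$: odd $b\ge 3$ (the crux).} I claim this case is impossible. First I reduce to $a=0$: as $b$ is odd, $a+bn$ is even exactly when $n$ is even, and $\morse(2a'+2bk)=\morse(a'+bk)$ while $\morse((2a'{+}1)+2bk)=\morse(a'+bk)+1$; hence $\pair{a}{b}\in\AS{\morse}$ implies $\pair{\lfloor a/2\rfloor}{b}\in\AS{\morse}$, and iterating yields $\pair{0}{b}\in\AS{\morse}$, i.e.\ $\morse(bn)=\morse(n)$ for all $n$. From this I would derive a contradiction. Applying the recurrences to $\morse(b(2k{+}1))=\morse(2k{+}1)=\morse(k)+1$ and writing $b=2\beta+1$ gives $\morse(bk+\beta)=\morse(k)$ for all $k$, i.e.\ $\pair{\beta}{b}\in\AS{\morse}$ with $0<\beta<b$ and $\gcd{\beta}{b}=1$. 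Iterating the even/odd substitutions on the shifts $0,\beta,\dots$ produces, for a \emph{finite} family of indices $c\in\{0,\dots,b-1\}$ generated by the two maps $c\mapsto\lfloor c/2\rfloor$ and $c\mapsto\lfloor c/2\rfloor+\beta\ (\mathrm{mod}\ b)$, relations $\asub{\morse}{c}{b}\similarx\morse$ together with a well-defined ``complementation parity'' along each generating path; since the family is finite, two paths must reach a common $c$, and comparing their parities forces $\asub{\morse}{c}{b}=\morse$ and $\asub{\morse}{c}{b}=\morse+1$ simultaneously, which is impossible. (Equivalently: by Theorem~\ref{thm:keane:digits}, $\morse(n)$ is the parity of the number of $1$s in the binary expansion of $n$, and a carry count via Kummer's theorem exhibits an explicit $n$ with $\morse(bn)\ne\morse(n)$; already $n=1$ works whenever that digit sum of $b$ is even.)

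\smallskip
\noindent The easy inclusion and the even-$b$ descent are routine, so the content sits entirely in the odd-$b$ step: showing that $\morse(bn)=\morse(n)$ cannot hold for any odd $b\ge 3$. I expect this finite but $b$\nobreakdash-dependent analysis --- of the $2$-kernel of the decimation $\asub{\morse}{0}{b}$, equivalently of the carry structure of $n\mapsto$ (digit sum of $bn$)$\,-\,$(digit sum of $n$) modulo~$2$ --- to be the main obstacle.
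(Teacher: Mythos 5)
Your overall architecture is sound and genuinely different from the paper's: the $\supseteq$ direction, the base case $b=1$ via Lemma~\ref{lem:suffix:in:AS:periodic}, the parity descent for even $b$, and the reduction of odd $b$ to the single statement ``$\morse(bn)=\morse(n)$ for all $n$'' are all correct, and this induction-on-$b$ organization is arguably cleaner than the paper's two-case split. But the proof has a genuine gap exactly where you flag it: you never actually refute $\morse(bn)=\morse(n)$ for odd $b\ge 3$. Your first sketch (tracking a ``complementation parity'' along paths generated by $c\mapsto\lfloor c/2\rfloor$ and $c\mapsto\lfloor c/2\rfloor+\beta$) does not work as stated: whenever $\asub{\morse}{c}{b}\similarx\morse$, the shift constant is \emph{forced} to equal $\morse(c)$ (take $n=0$), so two paths reaching the same $c$ carry no a~priori reason to have conflicting parities --- finiteness of the family alone yields nothing, and exhibiting an actual inconsistency in that finite automaton is precisely the content of the theorem. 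Your second sketch covers only the case where the binary digit sum of $b$ is even (so that $n=1$ works); for $b=7,11,13,\dots$ the digit sum is odd and you have produced no witness, and ``a carry count via Kummer's theorem'' is an appeal to a computation you have not done.

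The paper closes this gap with Lemma~\ref{lem:joerg:20100310}: for every $x$ that is not a power of $2$ there is an explicit $y$ with $\morse(y)=0$ and $\morse(xy)=1$. The construction is not a one-liner --- after reducing to odd $x$ with binary expansion $x_n\ldots x_1x_0$ ($x_n=x_0=1$, $n>0$), one takes $y=2^n+1$ when $x_1=0$ and analyzes the single carry in $x\cdot 2^n+x$ to count exactly three surviving $1$'s mod~$2$; when $x_1=1$ one passes to $3x$ (whose expansion ends in $01$) and sets $y=3y'$, checking separately that $\morse(3y')=0$. With such a $y$ in hand, the paper multiplies by $2^m$ to clear the low-order bits, uses carry-free addition to get $\morse(a+by2^m)=\morse(a)+1$, and contradicts $\pair{a}{b}\in\AS{\morse}$; in your streamlined setting ($a=0$) the lemma directly gives $\morse(by)\ne\morse(y)$. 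You would need to supply this lemma (or an equivalent explicit witness for every odd $b\ge 3$) for your proof to be complete.
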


We have the following recurrence equations for $\morse = \keane{01} = 0110 1001 \dots$:
\begin{align}
  \morse(2n) = \morse(n) && \morse(2n+1) = \inv{\morse(n)}
  \label{eq:morse:recurrence}
\end{align}
\emph{Carry-free addition} $\alpha \uplus \beta$ of binary numbers 
$\alpha = \alpha_p \ldots \alpha_0$ and $\beta = \beta_q \ldots \beta_0$ 
is defined by:
\[
  \alpha \uplus \beta 
  =
  \begin{cases}
  \alpha + \beta & \text{if there is no $i \leq \min(p,q)$ with $\alpha_i = \beta_i = 1$} \\
  \text{\textit{undefined}} & \text{otherwise}
  \end{cases}
\]
Carry-free addition preserves the total number of ones: 
$|\alpha\uplus\beta|_1 = |\alpha|_1 + |\beta|_1$.
Hence we have that $\morse(x+y) = \morse(x) + \morse(y)$ 
whenever $(x)_2 \uplus (y)_2$ is defined.
\begin{lemma}\label{lem:joerg:20100310}
  Let $x > 0$ with $x \neq 2^k$ for any $k \geq 0$. 
  Then $\morse(xy)=1$ for some $y > 0$ with $\morse(y) = 0$.
\end{lemma}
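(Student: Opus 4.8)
The plan is to argue with the binary-digit description of the \thuemorse{} word. Write $s(m)$ for the number of $1$'s in the binary expansion of $m$, so that $\morse(m)=s(m)\bmod 2$, and (by carry-free addition, as recalled above) $s(a+b)=s(a)+s(b)$, hence $\morse(a+b)=\morse(a)+\morse(b)$, whenever $a$ and $b$ have no common $1$-position. First I would reduce to $x$ odd: since $\morse(xy)=\morse((x/2^{v})y)$ for $2^{v}$ the largest power of $2$ dividing $x$, and $x$ is not a power of $2$ exactly when its odd part is $\geq 3$, we may take $x\geq 3$ odd, with binary length $\ell$ (so $2^{\ell-1}\leq x<2^{\ell}$ and $\ell\geq 2$). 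The single computation that drives everything is: \emph{$s(x(2^{j}-1))=j$ for every $j\geq\ell$}. Indeed $x(2^{j}-1)=(x-1)2^{j}+(2^{j}-x)$ with the two summands bit-disjoint, and $2^{j}-x=(2^{j}-1)-(x-1)$ is the $j$-bit complement of $x-1$, contributing $j-s(x-1)$ ones against the $s(x-1)$ ones of $x-1$. Consequently $\morse(x(2^{j}-1))=j\bmod 2$; I also record $\morse(2^{j}-1)=j\bmod 2$ and $\morse(x\cdot 2^{t})=\morse(x)$.

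If $\morse(x)=0$, take any \emph{odd} $j\geq\ell$ and put $y=(2^{j}-1)+2^{\,j+\ell}$. Its summands are bit-disjoint, and so are $x(2^{j}-1)<2^{\,j+\ell}$ and $x\cdot 2^{\,j+\ell}$; hence $\morse(y)=1+1=0$ and $\morse(xy)=\morse(x(2^{j}-1))+\morse(x)=1+0=1$, as required. So this case is settled immediately.

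The case $\morse(x)=1$ (so $x$ has an odd number $\geq 3$ of ones) is the substantive one, because the bit-disjoint trick is powerless here: for $y$ assembled from blocks $2^{a}(2^{b}-1)$ with $b\geq\ell$ and isolated high bits, each block contributes equally to $\morse(y)$ and $\morse(xy)$, and each isolated bit contributes $\morse(x)=1$ to $\morse(xy)$ and $1$ to $\morse(y)$, so $\morse(xy)=\morse(y)$ identically. Thus the goal becomes to find any $p\geq 1$ with $\morse(xp)\neq\morse(p)$; from such a $p$ one finishes by taking $y=p$ if $\morse(p)=0$, and $y=p+2^{K}$ with $2^{K}$ bit-disjoint from $p$ and $xp$ if $\morse(p)=1$ (then $\morse(y)=0$ and $\morse(xy)=\morse(xp)+\morse(x)=1$). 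I would look for $p$ of the form $p=2^{\,j+i}+(2^{j}-1)$ with $j\geq\ell$ and $1\leq i<\ell$. Then $\morse(p)=1+(j\bmod 2)$; in $xp=x\cdot 2^{\,j+i}+x(2^{j}-1)$ the low part $2^{j}-x$ of the second summand does not interact, so the only overlap is between its high part $(x-1)2^{j}$ and $x\cdot 2^{\,j+i}$, giving $s(xp)=j+s(x)-C_{i}$ where $C_{i}$ is the number of carries in adding the integers $x-1$ and $2^{i}x$. Unwinding, since $\morse(x)=1$, we get $\morse(xp)\neq\morse(p)$ iff $C_{i}$ is odd, and $C_{i}\equiv\morse(x-1)+\morse(x)+\morse((2^{i}+1)x-1)\pmod 2$, i.e.\ iff $\morse((2^{i}+1)x-1)=\morse(x-1)$. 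So the lemma follows once one shows that $i\mapsto\morse((2^{i}+1)x-1)$ is \emph{not} constantly equal to $\morse(x-1)$ on $\{1,\dots,\ell-1\}$.

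The hard part will be exactly this non-constancy claim — equivalently, that the \thuemorse{} sequence is not fixed by the map $n\mapsto xn$ when $x$ is not a power of $2$ and $\morse(x)=1$. (For $i\geq\ell$ bit-disjointness forces $\morse((2^{i}+1)x-1)=\morse(x)+\morse(x-1)$, so a genuinely short $i$ is needed; that is where all the content sits.) I would attack it by a direct carry analysis at $i=\ell-1$, where $C_{\ell-1}$ is $1$ plus the length of the maximal run of $1$-digits of $x$ immediately above the units digit, pinning the parity down from the digits of $x$; when that run has the wrong parity I would fall back on a short descent peeling off the leading bit, $x=2^{\ell-1}+\hat x$ with $\morse(\hat x)=0$, and reusing the $\morse(x)=0$ construction on $\hat x$ — the delicate point being to keep the auxiliary multiplier small enough that the carries stay controlled. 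As a cleaner alternative I would also try the generating function $F(z)=\sum_{n\geq 0}(-1)^{\morse(n)}z^{n}=\prod_{k\geq 0}(1-z^{2^{k}})$: the identity $\morse(xn)=\morse(n)$ for all $n$ is equivalent to $\frac1x\sum_{\zeta^{x}=1}F(\zeta z)=F(z^{x})$, and a comparison of low-degree coefficients (as already for $x=3$, where they are $+1$ and $-1$) should contradict it for every non-power-of-$2$ $x$ at once. Either way, the steps above confine all the difficulty to this one clean-looking claim.
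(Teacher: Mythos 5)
Your reduction to odd $x$, the identity $s(x(2^{j}-1))=j$ for $j\geq\ell$, and the whole case $\morse(x)=0$ are correct and complete: the choice $y=(2^{j}-1)+2^{j+\ell}$ with $j$ odd does give $\morse(y)=0$ and $\morse(xy)=1$. Likewise the bookkeeping step that converts any $p$ with $\morse(xp)\neq\morse(p)$ into a valid $y$ is fine. But the case $\morse(x)=1$ — which you yourself flag as ``the substantive one'' — is not proved. You reduce it, via the carry count $C_i$ for $(x-1)+2^{i}x$, to the claim that $\morse((2^{i}+1)x-1)$ agrees with $\morse(x-1)$ for \emph{some} $i\in\{1,\dots,\ell-1\}$, and then you only list strategies (a carry analysis at $i=\ell-1$, a descent on the leading bit, a generating-function identity) without carrying any of them out. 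That claim is exactly where all the difficulty of the lemma lives (it is essentially the statement that $\morse$ is not invariant under $n\mapsto xn$ when $\morse(x)=1$), so the proof has a genuine gap, not a routine omission. There is also a sign slip in the statement of what remains: since you need some $i$ with $C_i$ odd, i.e.\ some $i$ with $\morse((2^{i}+1)x-1)=\morse(x-1)$, what must be excluded is that the map is constantly \emph{different from} $\morse(x-1)$ on $\{1,\dots,\ell-1\}$, not constantly equal to it; and you have not ruled out that your particular one-parameter family of $p$'s fails for some $x$, in which case even the correct version of the non-constancy claim for that family might be false and a different family of witnesses would be needed.

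For comparison, the paper avoids this obstruction entirely by splitting on the bit $x_1$ of odd $x$ rather than on $\morse(x)$. When $x_1=0$ it takes $y=2^{n}+1$ (with $2^{n}$ the leading bit of $x$) and checks by a direct two-row addition that $xy=x2^{n}+x$ produces exactly one carry, leaving an odd number of ones; when $x_1=1$ it multiplies by $3$: then $3x$ ends in $01$, the first case applies to $3x$ with some $y'=2^{m}+1$, and one sets $y=3y'$, verifying $\morse(3y')=0$ by a short digit count. This sidesteps any general non-invariance statement about $\morse$ under multiplication. If you want to complete your route, the cleanest fix is probably to replace your open claim by a concrete two-case construction of this kind.
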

\proof
  Without loss of generality, we let $x$ be an odd number.
  Otherwise, if $x=2x'$, then $\morse(2x'y)=\morse(x'y)$.
  Put differently, trailing $0$'s of $(x)_2$ can be removed, 
  as they do not change the parity of the number of $1$'s in $(xy)_2$.
  
  Let $(x)_2 = x_n \ldots x_1 x_0$ be the binary representation of $x$ (with $x_n = x_0 = 1$).
  Note that $n > 0$ for otherwise $x = 2^0$.
  We distinguish two cases:
  \begin{enumerate} 
    \item 
    If $x_1 = 0$, then we take $(y)_2 = y_n \ldots y_0$ with $y_n = y_0 = 1$ and $y_i = 0$ for $0 < i < n$.
    So $y = 2^n + 2^0$ and $xy = x2^n + x$\,; 
    in binary representation:
    \[
    \begin{array}{cccccccc}
          &        &     & x_n & x_{n-1} & \ldots & x_1 & x_0 \\
          &        &     & 1   & 0       & \ldots & 0   & 1   \\
          \hline 
          &        &     & x_n & x_{n-1} & \ldots & x_1 & x_0 \\
      x_n & \ldots & x_1 & x_0 & 0       & \ldots & 0   & 0   \\
      \hline 
      x_n & \ldots & 1 & 0   & x_{n-1} & \ldots & x_1 & x_0
    \end{array}
    \]
    To see that $\morse(xy) = 1$, i.e., that $(xy)_2$ has an odd number of $1$'s,
    observe that $(xy)_2$ (the bottom line) contains the subword $x_{n-1} \ldots x_2$ twice. 
    These cancel each other out, 
    and, as $x_1 = 0$, the only remaining $1$'s are $x_n$ and $x_0$ on the outside 
    and the created $1$ at position $n+1$. That makes three.    
    
    \item
    In case $x_1 = 1$, 
    we take $y = 3 \prim{y}$ with $\prim{y}$ the result of case (i) for $3x$.
    First of all note that $(3x)_2$ ends in $01$ and so case (i) applies. 
    Secondly, we get that $\morse(3x\prim{y}) = 1$, and hence $\morse(xy) = 1$.
    Finally, to see that $\morse(y) = 0$, note that $\prim{y} = 2^m + 1$ for some $m > 0$,
    and so $\morse(y) = \morse(3\prim{y}) = \morse(2^{m+1} + 2^m + 2 + 1) = 1 - \morse(2^m + 2^{m-1} + 1)$
    by the recurrence equations~\eqref{eq:morse:recurrence} for $\morse$.
    To see that $\morse(2^m + 2^{m-1} + 1) = 1$, one distinguishes cases $m=1$ (then $\morse(4)=1$) and $m > 1$ 
    (then we again have three $1$'s in the binary expansion).
    \qed
  \end{enumerate} 

\proof[Proof of Theorem~\ref{thm:AS:morse}]
  The direction~`$\supseteq$' follows from Lemma~\ref{lem:keane:nth}.
  
  For the direction~`$\subseteq$', assume $\pair{a}{b}\in\AS{\morse}$.
  We distinguish two cases: (i)~$b$ is a power of $2$ and $a \geq b$,
  and (ii)~$b$ is not a power of $2$, and show that they lead to a contradiction. 
  \begin{enumerate}
  \item
  Let $m \geq 0$ and $a \geq b = 2^m$.
  Consider $k\geq 1$ such that $bk \leq a \lt b(k+1)$
  and let $\prim{a} = a - bk$. 
  Then
  $\asub{\morse}{a}{b}(n) = \morse(a+bn) = \morse(a'+b(k+n)) = \asub{\morse}{\prim{a}}{b}(k+n)$,
  in short: $\asub{\morse}{a}{b} = \asub{(\asub{\morse}{\prim{a}}{b})}{k}{1}$~($\ast$).
  As we have that $\prim{a} \lt b$,  
  it follows from 
  ($\supseteq$) above that $\pair{\prim{a}}{b}\in\AS{\morse}$, and so  
  $\asub{\morse}{\prim{a}}{b} \similarx \morse$~($\ast\ast$).
  Combining ($\ast$) and ($\ast\ast$) with the assumption $\asub{\morse}{a}{b}\similarx\morse$,
  we obtain $\asub{\morse}{k}{1} \similarx \morse$.
  
  Suppose $\asub{\morse}{s}{1} \eq \morse$ for some $s\geq 1$ 
  (the proof of $\asub{\morse}{s}{1} \neq \inv{\morse}$ is analogous).%
  \footnote{%
    We may also conclude this case by using the fact that $\morse$ is non-periodic:
    No positive iteration $k \ge 1$ of the shift operator of $\morse$ equals $\morse$ or its conjugate,
    because a sequence $\astr\in\str{A}$ such that $\asub{\astr}{k}{1} = \astr$ with $k \geq 1$ is $k$-periodic, 
    see Lemma~\ref{lem:suffix:in:AS:periodic}.%
  }
  This means $\morse(s + n) = \morse(n)$ for all $n$.
  So $\morse(s) = \morse(0) = 0$.
  Now let $p$ be maximal such that $2^p \leq s$
  and $(s)_2 = s_p s_{p-1} \ldots s_0$ (so $s_p = 1$).
  Then $(s + 2^p)_2 = 1 0 s_{p-1} \ldots s_0$,
  and hence $\morse(s+2^p) = \morse(s) = 0$.
  But the assumption $\asub{\morse}{s}{1} \eq \morse$ 
  tells us $\morse(s+2^p) = \morse(2^p) = 1$.

  \item
  Assume $b > 0$ is not a power of $2$, and let $a \geq 0$ be arbitrary.
  By Lemma~\ref{lem:joerg:20100310} there exists $y \gt 0$ 
  such that $\morse(y) = 0$ and $\morse(by) = 1$.
  Let $m$ be minimal such that $a < 2^m$. 
  Then also $\morse(b y 2^m) = 1$, and, since $(a + by2^m)_2 = (a)_2 \uplus (by2^m)_2$,
  we get $\morse(a + by2^m) = \morse(a) + 1 \pmod 2$.
  
  From the assumption $\pair{a}{b}\in\AS{\morse}$
  it follows that either $\myall{n}{\morse(a+bn)=\morse(n)}$,
  or $\myall{n}{\morse(a+bn)=\inv{\morse}(n)}$.
  In the first case we have $\morse(a) = \morse(0) = 0$
  and $\morse(a + by2^m) = \morse(y2^m) = 0$.
  In the second case, we have $\morse(a) = \inv{\morse}(0) = 1$
  and $\morse(a + by2^m) = \inv{\morse}(y2^m) = 1$.
  Both cases contradict $\morse(a + by2^m) = \morse(a) + 1 \pmod 2$.
  \qed
  \end{enumerate}

\bibliography{main}

\end{document}